\definecolor{bluegray}{rgb}{0.4, 0.6, 0.8}
\definecolor{bluered}{rgb}{0.6,0.3,0.4}
\def\bluegray{\color{bluegray}}
\definecolor{orange}{rgb}{1,0.5,0}
\numberwithin{equation}{section}
\newcommand{\begla}{\begin{equation}}
\newcommand{\beglab}[1]{\begin{equation}	\label{#1}}
\newcommand{\edla}{\end{equation}}
\definecolor{caputmortuum}{rgb}{0.35, 0.15, 0.13}
\definecolor{bulgarianrose}{rgb}{0.28, 0.02, 0.03}
\definecolor{charcoal}{rgb}{0.21, 0.27, 0.31}
\definecolor{coolblack}{rgb}{0.0, 0.18, 0.39}
\newcommand{\ie}{{\it i.e.}\ }
\newcommand{\cf}{{\it cf.}\ }
\newcommand{\eg}{{\it e.g.}\ }
\newcommand{\resp}{{resp.}\ }
\newcommand{\wrt}{{with respect to}}
\newcommand{\lhs}{{left-hand side}}
\newcommand{\rhs}{{right-hand side}}
\newcommand{\ens}{\enspace}
\def\dst{\displaystyle}
\newcommand{\ii}{^{-1}}
\newcommand{\ti}{\tilde}
\newcommand{\ex}{\mathrm{e}}
\newcommand{\dd}{\mathrm{d}}
\newcommand{\defeq}{\coloneqq} 
\newcommand{\col}{\colon\thinspace}          
\newcommand{\demi}{\frac{1}{2}}
\newcommand{\dem}{{\tfrac{1}{2}}}
\newcommand{\tiers}{\tfrac{1}{3}}
\newcommand{\dtiers}{\tfrac{2}{3}}
\newcommand{\quart}{\tfrac{1}{4}}
\newcommand{\bquart}{\tfrac{1}{4}}
\newcommand{\sspar}[1]{^{(#1)}}
\newcommand{\sz}{z_1\sspar{0}}
\newcommand{\ssz}[1]{z_2\sspar{#1}}
\newcommand{\ssij}{\sspar{i,j}}
\newcommand{\cc}[1]{_{[#1]}}
\newcommand{\br}{\bar r}
\def\hr{\hat r}
\def\sig{\sigma}
\def\th{\theta}
\newcommand{\brn}{\br\sspar n}
\newcommand{\brnp}{\br\sspar{n+1}}
\def\Z{{\mathbb Z}}\def\T{{\mathbb T}}\def\R{{\mathbb R}}
\def\cB{{\mathcal B}}\def\cG{{\mathcal G}}
\def\cT{{\mathcal T}}
\def\cN{{\mathcal N}}
\def\cS{{\mathcal S}}
\def\cV{{\mathcal V}}
\def\bz{\bar z}
\newcommand{\ga}{\gamma}
\newcommand{\al}{\alpha}
\def\th{\theta}
\def\L{\Lambda}
\def\<{\langle}
\def\>{\rangle}
\theoremstyle{plain}
\newtheorem{theo}{Theorem}[section]
\newtheorem{question}{Question}
\newtheorem*{Cor*}{Corollary}
\newtheorem*{Prop*}{Proposition}
\newcommand{\ph}{\varphi}
\def\ti{\tilde}
\def\pa{\partial}
\def\th{\theta}
\DeclarePairedDelimiter\absD{\lvert}{\rvert}%
\DeclarePairedDelimiter\normD{\lVert}{\rVert}%
\DeclarePairedDelimiter\angD{\langle}{\rangle}
\newcommand{\cF}{\mathcal{F}}
\newcommand{\cR}{\mathcal{R}}
\def\bm{\begin{matrix}}
\def\em{\end{matrix}}
\def\cV{{\mathcal V}}
\def\0{{\mathbf 0}}
\newtheorem{thm}{Theorem}[section]
\newtheorem{cor}[thm]{Corollary}
\newtheorem{lemma}[thm]{Lemma}
\newtheorem{prop}[thm]{Proposition}
\theoremstyle{remark}
\newtheorem{rem}{Remark}[section]
\def\cB{\mathcal{B}}
\theoremstyle{definition}
\newtheorem{definition}{Definition}[section]
\def\ssk{\smallskip}
\def\msk{\medskip}
\def\ssm{\smallsetminus}
\renewcommand{\setminus}{\ssm}
\newcommand{\Dist}{\operatorname{dist}}
\newcommand{\Leb}{\operatorname{Leb}}
\newcommand{\supp}{\operatorname{supp}}
\newcommand{\eps}{{\epsilon}}
\newcommand{\De}{{\Delta}}
\newcommand{\de}{{\delta}}
\newcommand{\la}{{\lambda}}
\newcommand{\La}{{\Lambda}}
\newcommand{\Om}{{\Omega}}
\newcommand{\om}{{\omega}}
\newcommand{\N}{{\mathbb N}}
\newcommand{\Q}{{\mathbb Q}}
\def\B0{{\bold{0}}}
\newcommand{\hz}{{\hat z}}
\def\Empty{}
\newcommand\oplabel[1]{
  \def\OpArg{#1} \ifx \OpArg\Empty {} \else
  	\label{#1}
  \fi}
\newcommand{\ka}{\kappa}
\newcommand{\comm}[1]{}
\newcommand{\comment}[1]{}
\newcommand{\diff}{diffusive-$*$}
\newcommand{\epsc}{\eps_{\operatorname{c}}}
\newcommand{\epsd}{\eps_{\operatorname{d}}}
\newcommand{\epsi}{\eps_{\operatorname{i}}}
\newcommand{\epsf}{\eps_{\operatorname{f}}}
\newcommand{\epsH}{\eps_{\operatorname{H}}}
\newcommand{\per}{_{\operatorname{per}}}
\newcommand{\Id}{\operatorname{Id}}
\newcommand{\ID}{\operatorname{Id}}
\DeclarePairedDelimiter{\ceil}{\lceil}{\rceil}
\DeclarePairedDelimiter{\flo}{\lfloor}{\rfloor}
\def\th@plain{%
  \thm@notefont{}
  \itshape 
}
\def\th@definition{%
  \thm@notefont{}
  \normalfont 
}
\newcommand{\IMP}{ \ens \Rightarrow \ens }
\newcommand{\LIP}{\operatorname{Lip}}
\newcommand{\modZ}{\!\! \mod \Z}
\newcommand{\noinbf}[1]{\bigskip

\noindent\textbf{#1)}}
\newcommand{\kG}{\cG^{\al,L}}
\newcommand{\ov}[1]{\overline{#1}}
\newcommand{\cK}{\mathcal{K}}
\begin{document}



\begin{center}

{\huge KAM tori are no more than sticky}



\vspace{.5cm}

B.~Fayad, D.~Sauzin

\vspace{.5cm}

6 December 2018

\end{center}

\medskip






\begin{abstract}
When a Gevrey smooth perturbation is applied to a quasi-convex
integrable Hamiltonian, it is known that the KAM invariant tori that
survive are 
``sticky'', \ie
doubly exponentially stable. We show by examples the optimality of 
this effective stability. 
\end{abstract} 


\bigskip

 \makeatletter
    \renewcommand{\l@section}{\@dottedtocline{1}{1.5em}{3em}}
    \renewcommand{\l@subsection}{\@dottedtocline{2}{3.0em}{3.5em}}
    \renewcommand{\l@subsubsection}{\@dottedtocline{3}{4.5em}{4.2em}}
    \makeatother
    
\noappendicestocpagenum
\setcounter{tocdepth}{1}
\tableofcontents


\bigskip

\section{Introduction} 



We are interested in effective stability around invariant
quasi-periodic tori of nearly integrable analytic or Gevrey regular
Hamiltonian systems. Under generic non degeneracy assumptions on the
integrable Hamiltonian, KAM theory (after Kolmogorv Arnold Moser)
guarantees the existence of a large measure set of invariant
quasi-periodic tori for the perturbed systems. The invariant tori
given by KAM theory have Diophantine frequency vectors.  To study the
diffusion rate of orbits that start near these invariant tori, an
important tool is the Birkhoff Normal Forms (or BNF) at an invariant
torus, that introduce action-angle coordinates in which the system in
small neighborhoods of an invariant Diophantine torus becomes
integrable up to arbitrary high degrees in the Taylor series of the
Hamiltonian (see for example \cite{Bi} or \cite{SM}).

Exploiting the Diophantine property of the frequency vector of the
invariant torus, it is possible to collect estimates in the successive
BNFs and establish exponential stability of the torus, in the sense
that nearby solutions remain close to the invariant torus for an
interval of time which is exponentially large with respect to some
power of the inverse of the distance $r$ to the torus, a power that
depends only on the Diophantine exponent $\tau$ of the torus in the
case of real analytic Hamiltonians, and that involves additionally the
degree of Gevrey smoothness in the case of Gevrey smooth Hamiltonians
(\cite{Wiggins}, \cite{Popov}). 
%

Combining BNF estimates with Nekhoroshev theory, Giorgilli and
Morbidelli proved in \cite{MG} that for integrable Hamiltonians with a
quasi-convex Hessian, the KAM tori of an analytic perturbation of the
Hamiltonian are doubly exponentially stable: the exponential stability
time $\exp\!\big(r^{-1/(\tau+1)}\big)$ is promoted to
$\exp\!\big(\!\exp\big(r^{-1/(\tau+1)}\big)\big)$. 
Invariant quasi-periodic tori with this strong form of effective
stability are termed {\it sticky}.

Stickiness of the invariant tori was later extended in \cite{BFN} to a
residual and prevalent set of integrable Hamiltonians and to the
Gevrey category. It was proved there that generically, both in a
topological and measure-theoretical sense, an invariant Lagrangian
Diophantine torus of a Hamiltonian system is doubly exponentially
stable. Also, for a residual and prevalent set of integrable
Hamiltonians, for any small perturbation in Gevrey class, there is a
set of almost full Lebesgue measure of KAM tori which are doubly
exponentially
stable. 

Our aim here, is to give examples showing that doubly exponential
stability cannot in general be
strengthened. 
Loosely stated our main result is the following 

\bigskip 

\noindent {\bf Theorem.}  {\it For arbitrary $N\ge3$, there exist
  quasi-convex Hamiltonian systems in 
  $N$~degrees of freedom that can be perturbed in the Gevrey smooth
  category so that most of the invariant tori of the perturbed system
  are no more than doubly exponentially stable.}

\bigskip 

The exact statements will be given in Section~\ref{sec:statem}.  The diffusion mechanism we will use in our constructions is the so called Herman synchronized diffusion, that first appeared in  \cite{hms} where the speed in Arnol'd diffusion is estimated for a class of nearly integrable system. In \cite{hms},  completely integrable systems with twist are considered  and it is shown that it is possible to construct perturbations of size $\eps$ in Gevrey class, that have orbits diffusing in action at an  exponential rate in inverse powers of $\eps$.  The diffusion rate is shown to be almost optimal due to the Nekhoroshev effective stability theory.  

Our setting here is quite different, in this that the perturbative
parameter is not an extra parameter $\eps$ but the {\it action
  variable itself} when viewed as the distance $r$ of the diffusive
orbit from the invariant torus.  In this ``singular perturbative
setting", the nature of the construction is in fact expected to be
different from \cite{hms} since the diffusion rate is at best doubly
exponential in an inverse power of~$r$, compared to 
%
%
the simple exponential one can achieve in
the Arnol'd diffusion problem treated in \cite{hms}.

The new difficulties that arise in the singular perturbative setting, as well as the novel constructions to overcome them  will be commented  in the next section where the heuristics of our construction are described in detail. 










\section{Description of the construction} \label{sec:heurist}


The construction of the diffusive flows is obtained by suspension from
a perturbative construction in the discrete setting of symplectic maps
on 
%
%
$M= (\T\times\R)^n \simeq \T^n\times\R^n$,
where $\T$ denotes the torus $\R/\Z$
and $n\ge2$.

We now explain the main ideas in the discrete construction in
  the case $n=2$.  We
concentrate on the diffusion rate from the neighborhood of a single
invariant torus.  We will be dealing with perturbation of a product of
two twist maps of the annulus $\T \times \R$, denote them by $F_0$ and
$G_0$, $F_0(\th_1, r_1)= (\th_1+\om_1+r_1+\Z, r_1)$,
$G_0(\th_2, r_2) \defeq (\th_2+\om_2+r_2+\Z, r_2)$. Set
$T_0 = F_0 \times G_0 \col M \righttoleftarrow$. Observe that $T_0$
has an invariant torus $\cT_0=\T^2\times \{(0,0)\}$, on which the restricted
dynamics is the translation of vector $\om=(\om_1,\om_2)$.

Let us explain how to perturb $T_0$ into a map $T$ that is tangent to
$T_0$ at $\cT_0$ and that has pieces of orbits that diffuse away from a
neighborhood of~$\cT_0$ at a doubly exponentially small speed. 
More precisely, we obtain a sequence $\rho_n \to 0$, points~$z_n$
such that $\Dist(z_n,\cT_0)<\rho_n$ and times $\Theta_n$ that are
doubly exponentially large in $1/\rho_n$, such that
$\Dist(T^{\Theta_n}(z_n),\cT_0)$ and $\Dist(T^{-\Theta_n}(z_n),\cT_0)$
are both doubly exponentially large in $1/\rho_n$.
It will appear clearly from our diffusion mechanism that drifting away
from $\cT_0$ by an amount $\rho_n$, or by an amount that is doubly
exponentially large in $1/\rho_n$, both require a doubly exponentially large time.


\subsection*{\bluegray  Herman synchronized diffusion}  


The diffusion mechanism we will use is the Herman synchronized
diffusion, that first appeared in \cite{hms}.  Let us explain in some
words what is the synchronized diffusion. It is based on the following
mechanism of coupling of two twist maps of the annulus (the second one
being integrable with linear twist): at exactly one point~$p$ of a
well chosen periodic orbit of period~$q$ of the first twist map 
in $M_1=\T\times\R$, 
the coupling consists of pushing the orbits in the second annulus up
in $M_2=\T\times\R$
on some fixed vertical $\Delta$ by an amount $1/q$ that sends an
invariant curve whose rotation number is a multiple of $1/q$ exactly
to another one having the same property (due to the linear twist
property).

The dynamics of the $q^{\rm th}$ iterate of the coupled map on the
line $\{p\}\times \Delta \subset M_1\times M_2$ will thus drift at a
linear speed : after $q^2$ iterates the point will have moved by~$1$
in the second action coordinate~$r_2$, and after $q^3$ it will have
moved by~$q$.
The diffusing orbits obtained this way are
  bi-asymptotic to infinity: their $r_2$-coordinates travel from~$-\infty$
  to~$+\infty$ at average speed $1/q^2$.


%
For this mechanism to be implemented with a Gevrey regular
small coupling of the two twist maps, it is necessary that the
periodic point $p$ be isolated from the rest of the points on its
orbit by a distance $\sigma$ that is greater than the inverse of  some
power of $\ln q$, since $1/q$ is the translation amount required from
the coupling that must be exclusively localized  around $p$. We call
such periodic points ``logarithmically'' isolated. 



\subsection*{\bluegray  Optimal rates in Arnol'd diffusion of \cite{hms}}  


In \cite{hms}, only one periodic point is sufficient to have estimates on the Arnol'd diffusion  rate in the nearly integrable system. In fact, in \cite{hms}, a completely integrable twist map  of the annulus such as $F_0$ is first perturbed to create a hyperbolic saddle point with a saddle connection (a pendulum). Near the separatrix of the pendulum,  
one can find periodic orbits of arbitrary high period $q$ and isolation $\sigma$ that is determined by the hyperbolicity  of the saddle point. More precisely, with an $\eps$ perturbation of the integrable twist, the periodic orbits near the separatrix will then have an isolation of order $\eps^{1/2}$ and choosing $q$ exponentially large in the inverse of $\eps$ allows to use the coupling mechanism with a second completely integrable linear twist to obtain diffusive orbits at exponential rate in the inverse of $\eps$. 


\subsection*{\bluegray  Doubly exponential diffusion rates in the singular perturbative setting}  


In our singular perturbative setting, the main obstacles when one attempts to apply the synchronized diffusion mechanism are  threefold : 1) the diffusion rate must be calculated from arbitrary small neighborhoods of the invariant torus $\cT_0$, hence many perturbations and many diffusive orbits may enter into play as opposed to the single orbit of \cite{hms}; 2) each perturbation must not affect $\cT_0$ and must allow for further perturbations; 3) the Diophantine property on the frequency of $\cT_0$ imposes, due to averaging, strong restrictions on the period and the isolation properties of the periodic points that come near the invariant torus.

The main step to prepare for the coupling construction is to be able
to perturb~$F_0$ in order to get an annulus map~$F$ 
 on the first factor $M_1=\T\times\R$
 that is tangent to~$F_0$ at the circle $r=0$ (we omit the
 subscript~$1$ for $r_1$ in this 
paragraph) and that
 has a sequence of periodic points $p_n$ at distance $r_n$ from the
 circle $r=0$ and that are~$\sigma_n$ isolated from their
 orbits. Since we will work with perturbations of~$F_0$ that are
 compactly supported away from $r=0$, we cannot expect larger
 isolation~$\sig_n$ than an exponentially small quantity in the
 inverse of~$r_n$, 
and the precise exponent involved in this
   exponential is dictated by the Gevrey regularity~$\al$ only.
%
%
%
   According to the above description of how the synchronized
   diffusion mechanism functions, the period~$q_n$ of the point~$p_n$,
   that will also determine the order of the diffusion rate, should
   not be taken smaller than an exponential in the inverse
   of~$\sigma_n$. Hence, the double exponential in $\frac{1}{r_n}$~!!

Let us now suppose that a map~$F$ is constructed with such a sequence
$p_n\in M_1$, and let us show how to obtain the coupling with the
map~$G_0$ which lives on the other factor
  $M_2=\T\times\R$. The main idea is to couple~$F$ and~$G_0$
separately at each periodic orbit with compactly supported Gevrey
regular coupling functions. Indeed, while performing locally the
couplings around the product of the orbit of $p_n$ with~$M_2$,
%
%
we keep the direct product structure of $F$ with $G_0$ in the products
of smaller neighborhoods of the circle $\T\times\{0\} \subset M_1$
with~$M_2$. Thus, the couplings that yield diffusive orbits involving
the successive points~$p_n$ are done inductively without affecting
each other.


\subsection*{\bluegray How to perturb the first factor to get a sequence of isolated periodic points}  


We turn now to the perturbative techniques that allow to
obtain~$F$. We put together two tools.
The first one allows us to perturb a periodic circular rotation of
period $P/Q$ while fine-tuning the rotation number so as to create circle diffeomorphisms with  a
$\sigma$-isolated periodic point $p$ of arbitrary large period~$q$,
where $\sigma$ is 
exponentially small in~$Q$ for large~$Q$ but otherwise independent
of~$q$.
%
%
In particular, we can choose~$q$ 
exponentially large in $1/\sigma$ (%
%
doubly exponentially large  in~$Q$). 
The second tool is a trick due to M.~Herman that
allows us to embed the circle dynamics thus obtained inside the phase
portrait of a perturbation of a linear twist map of the
annulus~$M_1$. In fact, the periodic map will appear in the
neighborhood of the circle of period $P/Q$ of the linear twist.
The coupling mechanism will then yield orbits that diffuse at speed
$1/q^2$ in $M_1\times M_2$. Of course, to conclude we
have to require that $1/Q$ be larger than the distance
$r_Q=\absD{\om_1-P/Q}$
from the circle $\T\times\{0\}$ to the periodic orbit of the linear twist near which the
isolated periodic point~$p$ is embedded. 
%
%
At that stage, we could assume~$\om_1$ irrational or Diophantine and
then impose that $1/Q$ be of order $\sqrt{r_Q}$ or larger, depending
on the Diophantine exponent of~$\om_1$, 
  with the hope to refine the estimates on~$\sig$ and thus on the
  diffusing time.
%
%
But, since we want to embed, using Herman's trick, the isolated
periodic point in~$M_1$ at distance~$r$ {\it without affecting} the circle
$\T\times\{0\}$, we must accept 
%
%
  the exponential smallness of the isolation parameter in~$M_1$ to be
  dictated in the first place by the Gevrey regularity of our compactly supported perturbations, thus absorbing
  the potential gain stemming from arithmetics.
This means that by our technique we cannot tackle the problem of
matching the diffusion rate with the doubly exponential stability
lower bounds obtained in \cite{MG,BFN}, that are of the
form 
$\exp\!\big(\!\exp\big(r^{-\frac{1}{\alpha(\tau+1)}}\big)\big)$,
where~$\alpha$ refers to the Gevrey regularity class and~$\tau$ is the
Diophantine exponent of the translation vector.

To emphasize the role that arithmetics should play in optimizing the diffusion speed we may ask the following question that is similar to the one raised in  \cite[Question 24]{FKicm} for elliptic fixed points. 

\begin{question} Give an example of an analytic or Gevrey smooth Hamiltonian that has a non-resonant invariant torus with positive definite twist that is not more than exponentially stable in time. \end{question}

It follows from \cite{MG,BFN,BFN_point}, that a super Liouville property must be required on the frequency vector of the invariant torus.
 
\bigskip

\subsection*{\bluegray Questions on the optimality of the bounds, on
  analytic perturbations and on the genericity of doubly exponential
  diffusion}


An interesting way to address the question of optimizing the bounds,
as well as to aim at analytic constructions, is to look for a single
analytic (or Gevrey smooth) perturbation of $F_0$ that yields a map
$\tilde{F}$ that is tangent to $F_0$ at $0$ to some fixed degree, and
that has a sequence of periodic orbits $p_n$ with isolation properties
related to the arithmetics of $\om_1$ (for instance, with $\sigma_n$
of order $e^{r_n^{-1/(\tau'+1)}}$, where $\tau'$ is such that $\om_1$
is not $\tau'$-Diophantine).  But even if such a perturbation of $F_0$
is possible, it would still be a delicate task to perform an analytic
coupling with $G_0$ since a single analytic intervention to couple the
neighborhood of the orbit of any periodic point $p_n$ with the second
factor will affect the whole map everywhere and we cannot rely on the
nice direct product structure of $F$ with $G_0$ for further
perturbations as we do in the Gevrey category. One should probably
resort to the theory of normally hyperbolic invariant manifolds to say
that some kind of product structure remains valid at the periodic
orbits of the points $p_n$. Even then however, the linear character of the  twist of the
second factor will definitely disappear, which will also bring extra
difficulties.

Let us make a last remark concerning the analytic category. In fact, obtaining examples of analytic Hamiltonians having a topologically unstable invariant torus with positive definite twist at the torus is a hard task by itself, let alone the control of the diffusion time that is the object of our investigation here. Real analytic Hamiltonians with unstable invariant tori and elliptic fixed points (with arbitrary frequencies in the case of $4$ degrees of freedom) were obtained in \cite{F,FF}, but these examples do not have positive  definite  twist.

Finally, besides the analytic question and the question of optimizing the bounds,  one can ask whether the upper bounds on the diffusion rates that we obtain in our examples are generic for KAM tori, or for invariant quasi-periodic Diophantine tori in general.


\subsection*{\bluegray Plan of the paper}  


Section~\ref{sec:statem} contains the main statements for
symplectomorphisms and for flows.
In Section~4, we state the main inductive step of the construction,
that yields a diffusive segment of orbit for a perturbation of
$F_0 \times G_0$ linked to one isolated periodic point that will be
created on the first factor. 
%
%
In Section~4.1 we explain how the main
inductive step is used to result in a diffusive invariant torus. In
Section~4.2 we elaborate on this to get simultaneously a large measure
set of invariant tori that are diffusive. 

Sections~5 and~6 contain the
proof of the main inductive step.
Section~5 is devoted to the perturbation of~$F_0$ in order to get the
map~$F$ with isolated periodic points. In Section~5.1, we show how to
perturb circular rotations to obtain a periodic orbit with the
required isolation estimate and with arbitrary large period. In
Section~5.2 we show how this periodic orbit can be imbedded in a
perturbation of~$F_0$.  Section 6 introduces the coupling lemma of~$F$
with~$G_0$ and shows how to use it to get diffusion using the isolated
periodic point of~$F$.

In Section~7 we provide the suspension trick that allows to transfer the results from the discrete case of symplectomorphisms to the continuous time context of Hamiltonian flows. 

In the Appendix we collect and prove some necessary Gevrey estimates for maps and for flows that are used all along the paper. 


\section{Statements}   \label{sec:statem}


\subsection{Notations on diffusive dynamics and Gevrey functions} 


We use the notation
%
%
\beglab{eqdefE}
E(\nu) = E_{C,\ga}(\nu) \defeq 
\ex^{
\ex^{ C \nu^{-\ga} } 
%
%
}
\quad \text{for $\nu>0$,}
%
\edla
for some choice of $C,\ga>0$ 
that we will explicit later.  
%
%
We will say informally that ``$E(\nu)$ is doubly exponentially large for small~$\nu$''.
Notice that
\beglab{ineqEElaN}
E(\nu) \gg E(\la\nu)^\mu \ens \text{as $\nu\to0$,}
\qquad \text{for every $\la>1$ and $\mu>0$.}
\edla
 
\begin{definition}    \label{defDiffusive}
Given a transformation~$T$ (or a flow) on a metric space $(M,d)$ and $\nu>0$, we say that:
\begin{itemize} 
\item
A point~$z$ of~$M$ is \textit{$\nu$-diffusive}
%
%
if there exist an initial condition $\hz \in M$ and 
%
%
a positive integer (or real)~$t$
such that $d(\hz,z)\le\nu$, 
$t\le E(\nu)$ 
%
%
and $d(T^t \hz,z)\ge E(2\nu)$.


\item
A subset~$X$ of~$M$ is \textit{$\nu$-diffusive} if all points in~$X$ are
$\nu$-diffusive.

\item
A subset~$X$ of~$M$ is \textit{diffusive} if there exists a sequence $\nu_n \to
0$ such that 
$X$ is $\nu_n$-diffusive for each~$n$.
%
%
\end{itemize}
In the latter cases, we also say that $T$ is \textit{$\nu$-diffusive
  on~$X$}, \resp \textit{diffusive on~$X$}.
\end{definition}



Our goal is to construct examples of diffusive dynamics in
the context of near-integrable Hamiltonian systems and
exact-symplectic maps.
The requirement $d(T^t \hz,z)\ge E(2\nu)$ might seem
  exaggerated at first look, but, as mentioned earlier, there will be
  no essential difference in the order of magnitude of the time needed
  to diffuse by an amount~$\nu$ or by an amount as large as
  $E(2\nu)$.


We will also use a variant of the above definition: we say that a
point or a set is \textit{$\nu$-\diff} or \textit{\diff} if the corresponding property
holds with the function~$E$ replaced by
\beglab{eqdefEstar}
E^*(\nu) \defeq E\big( \nu/\absD{\ln\nu} \big) = 
\exp\big(
\nu^{ \ga C \nu^{-\ga} }
\big)
%
\quad \text{for $\nu\in(0,1)$.}
\edla
Notice that, as $\nu\to0$, $E(\nu) \ll E^*(\nu) \ll \ex^{
\ex^{ C' \nu^{-\ga'} }}$ for any $\ga'>\ga$ and $C'>0$.

\msk

We will deal with Gevrey smooth functions and maps in several real
variables. 
Periodicity may be required \wrt\ some of these variables,
in which case we will consider that each of the corresponding variables
is an angle, which lives in
\[ \T\defeq \R/\Z. \]
Recall that, given a real $\al\ge1$, Gevrey-$\al$ regularity is
defined by the requirement that the partial derivatives exist at all
(multi)orders~$\ell$ and are bounded by $C M^{\absD\ell}
\absD\ell!^{\al}$ for some~$C$ and~$M$;
when $\al=1$ this simply means analyticity, but we shall take $\al>1$
throughout the article. 
Upon fixing a real $L>0$ which essentially stands for the inverse of
the previous~$M$, 
one can define a Banach algebra
$G^{\al,L}(K) \subset C^\infty(K)$
%
%
when~$K$ is a Cartesian product of closed Euclidean balls and tori;
the elements of $G^{\al,L}(K)$ are the ``uniformly
  Gevrey-$(\al,L)$'' functions on~$K$.
In the non-compact case of a Cartesian product $\R^N\times K$ with~$K$
as above, we define
\[ G^{\al,L}(\R^N\times K) \subset 
\kG(\R^N\times K) \subset 
C^\infty(\R^N\times K),
\]
where the smaller space is a Banach algebra, with norm $\normD{\,.\,}_{\al,L}$,
consisting of uniformly Gevrey-$(\al,L)$ functions on $\R^N\times K$,
while $\kG(\R^N\times K)$ is a complete metric space,
with translation-invariant distance $d_{\al,L}$,
%
obtained by covering~$\R^N$ by an increasing sequence of closed balls and considering
the Fr\'echet space structure accordingly.
Details are given in Appendix~\ref{AppSubsecGev}.
%


\subsection{Hamiltonian flows}   \label{sec:HamFlows}


Let $n \ge 2$.
We work in $\T^{n}\times \R^{n}$, with coordinates $(\th_1,\ldots,\th_n,r_1,\ldots,r_n)$,
or in $\T^{n+1}\times \R^{n+1}$, with coordinates
$(\th_1,\ldots,\th_n,\tau,r_1,\ldots,r_n,s)$.
We use the standard symplectic structures $\sum_{j=1}^{n} \dd\th_j\wedge\dd r_j$
or $\sum_{j=1}^{n} \dd\th_j\wedge\dd r_j + \dd\tau\wedge\dd s$,
%
%
so that it is equivalent to consider a non-autonomous
Hamiltonian $h(\th,r,t)$ on $\T^{n}\times \R^{n}$ which depends
$1$-periodically on the time~$t$
or a Hamiltonian of the form
$H(\th,\tau,r,s) = s + h(\th,r,\tau)$ on $\T^{n+1}\times \R^{n+1}$.
Given an arbitrary $\om\in\R^n$, we will be interested in
non-autonomous $1$-periodic perturbations of 
\beglab{eq:defhz} 
h_0(r) \defeq (\om,r)+\dem(r,r) 
\edla
or, equivalently, in certain autonomous perturbations of the integrable
Hamiltonian
\begla H_0(r,s) \defeq s+ h_0(r) \edla
for which we denote by $\cT_{(r,s)}$ the invariant torus $\T^{n+1}\times
\{(r,s)\}$ associated with any $r\in\R^n$ and $s\in\R$
%
(it carries the quasi-periodic motion $\dot\th=\om+r$, $\dot\tau=1$).

\begin{theo} \label{theorem.onetorus}   
Let $\al>1$ and $L>0$ be real. 
%
For any $\eps>0$ there is $h \in \kG(\T^{n}\times\R^n\times\T)$
such that
\begin{itemize} 
\item[(1)] $d_{\al,L}(h_0,h)<\eps$, 
\item[(2)] the Hamiltonian vector field generated by 
 $H \defeq s + h(\th,r,\tau)$ 
is complete and, for every $s\in\R$, the torus $\cT_{(0,s)} \subset \T^{n+1}\times\R^{n+1}$ is
invariant and diffusive for~$H$.
Here the exponent implied in~\eqref{eqdefE} is $\ga = \frac{1}{\al-1}$.
%
\end{itemize}
\end{theo}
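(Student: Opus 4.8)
The plan is to reduce Theorem~\ref{theorem.onetorus} to a discrete-time statement about exact-symplectic maps of $M=(\T\times\R)^n$ and then build the diffusive map by an inductive coupling construction, finishing with a suspension argument. Concretely, I would first state a \emph{main inductive lemma}: starting from the product twist $T_0=F_0\times G_0$ (with the remaining $n-2$ factors left untouched, or absorbed into extra integrable twists), and given any previously constructed perturbation which still agrees with a product $F\times G_0$ on a neighborhood $\T^2\times B_{r_n}$ of the torus $\cT_0$, one can add a compactly supported Gevrey-$(\al,L)$ perturbation, supported in the region $\{r_1\simeq r_n\}$ away from $\cT_0$, which (i) does not change the map on a smaller neighborhood $\T^2\times B_{r_{n+1}}$, (ii) keeps $\cT_0$ invariant with the same tangency, and (iii) produces a segment of orbit starting $\rho_n$-close to $\cT_0$ that drifts a distance $\geq E^*(2\rho_n)$ in time $\leq E^*(\rho_n)$. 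Summing these perturbations over $n$ (the supports being pairwise disjoint thanks to (i)--(ii), so convergence in the Fr\'echet space $\kG$ is immediate and the total size is $<\eps$) yields a single map $T$ for which $\cT_0$ is invariant and diffusive, which is essentially the discrete analogue of the theorem.

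The heart of the inductive lemma is the two-step perturbative construction of the first-factor map $F$. \textbf{Step A (perturbing rotations).} Choose rationals $P/Q$ approximating $\om_1$ so that $r_n:=|\om_1-P/Q|$ is comparable to $1/Q$; perturb the periodic rotation of $\T$ of rotation number $P/Q$ by a Gevrey-$(\al,L)$, compactly supported perturbation, fine-tuning the rotation number, to obtain a circle diffeomorphism with a single periodic point $p$ of arbitrarily large period $q$ which is $\sig_n$-isolated from the rest of its orbit, where $\sig_n$ is exponentially small in $Q$ (hence in $1/r_n$) but independent of $q$; the Gevrey class forces exactly $\sig_n\sim\exp(-c\,r_n^{-1/(\al-1)})$, which is where $\ga=\frac1{\al-1}$ enters. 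One then takes $q$ exponentially large in $1/\sig_n$, i.e.\ doubly exponentially large in $1/r_n$. \textbf{Step B (Herman's embedding trick).} Embed this circle dynamics in a perturbation $F$ of the linear twist $F_0$, localized in an annular neighborhood of the invariant circle $\{r_1=\om_1-P/Q\}$, so that $F$ is tangent to $F_0$ at $\{r_1=0\}$ and still $\sig_n$-isolates $p_n$. \textbf{Step C (the coupling).} Apply the synchronized-diffusion coupling lemma: couple $F$ with the linear twist $G_0$ by a compactly supported Gevrey coupling localized around $\orb(p_n)\times M_2$ that pushes vertically by $1/q$ on the fiber over $p_n$; because of the linear twist in the second factor this sends rational invariant curves to rational invariant curves, so the $q$-th iterate drifts linearly and after $\sim q^3$ iterates the $r_2$-coordinate has moved by $\sim q$, i.e.\ a doubly-exponential-in-$1/r_n$ displacement in a doubly-exponential-in-$1/r_n$ time. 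Crucially, away from $\orb(p_n)$ the coupled map is still a genuine product with $G_0$, which is precisely what makes the inductions at different scales independent.

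The last step is \textbf{suspension}: convert the discrete exact-symplectic map $T$ of $(\T\times\R)^n$ into a time-$1$-periodic Hamiltonian $h(\th,r,\tau)$ on $\T^n\times\R^n\times\T$, hence an autonomous $H=s+h$ on $\T^{n+1}\times\R^{n+1}$, with $\cT_{(0,s)}$ invariant and diffusive, controlling the Gevrey norm so that $d_{\al,L}(h_0,h)<\eps$ and checking completeness of the vector field (this uses that the perturbations are compactly supported in $r$). This requires a Gevrey suspension lemma producing, from a Gevrey-close-to-$\ID$ symplectomorphism, a Gevrey-small time-periodic Hamiltonian whose time-one map is the given one; the needed Gevrey estimates for composition, inversion and time-one flows are collected in the Appendix.

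I expect the main obstacle to be \textbf{Step A together with its interaction with the scale bookkeeping}: one must perturb a rigid rational rotation, \emph{inside a fixed Gevrey-$(\al,L)$ ball and with support of controlled width}, to get simultaneously (a) an honest periodic orbit of prescribed huge period $q$, (b) logarithmic isolation $\sig_n\gtrsim(\ln q)^{-\text{something}}$ of one of its points --- equivalently $q\lesssim\exp(\sig_n^{-\text{something}})$ --- and (c) the isolation $\sig_n$ not better than the Gevrey-dictated $\exp(-c r_n^{-1/(\al-1)})$ because the perturbation must die before reaching $\{r_1=0\}$. Balancing these three constraints, and then propagating the resulting quantitative estimates through the coupling and through the infinite sum so that the final time is $\leq E^*(\rho_n)$ while the displacement is $\geq E^*(2\rho_n)$ for a suitable sequence $\rho_n\to0$ and with $C,\ga$ as in \eqref{eqdefE}, is the technically delicate core; by comparison the coupling lemma and the suspension are, modulo careful Gevrey estimates, fairly mechanical.
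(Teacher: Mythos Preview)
Your proposal is correct and follows essentially the same route as the paper: reduce to the discrete map statement, build the perturbation as a disjoint-support sum via the ``main building brick'' (your Steps~A--C match the paper's Prop.~\ref{prop.isolation}, Prop.~\ref{propB}, and Lemma~\ref{lemcoupl} respectively), then suspend. Two small corrections: for Theorem~\ref{theorem.onetorus} the paper proves plain diffusiveness with~$E$, not $E^*$ (the latter is only needed for Theorem~\ref{theorem.manytori}, where the scales accumulate on a Cantor set rather than a single torus); and the suspension is not a general ``Gevrey suspension lemma'' for arbitrary near-identity symplectomorphisms but an explicit formula (Lemma~\ref{lem:explicitsusp}) that works precisely because your map already comes as $\Phi^v\circ\Phi^u\circ\Phi^{h_0}$---so do not phrase it as an abstract existence result.
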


Note that if $\om$ is Diophantine then, for any~$h$ satisfying~(1)
of Theorem~\ref{theorem.onetorus}, we know from \cite{MG,BFN} that
$\cT_{(0,s)}$ is doubly exponentially stable (because $H_0$ is quasi-convex). 
More precisely, it holds that for any initial condition that is at distance $\rho$ from $\cT_{(0,s)}$, the orbit will stay within distance $2\rho$ from  $\cT_{(0,s)}$ during time $\exp\!\big(\!\exp\big(r^{-\frac{1}{\alpha(\tau+1)}}\big)\big)$,
where~$\tau$ is the Diophantine exponent.
Theorem~\ref{theorem.onetorus} shows that we cannot expect in general a 
stability better than doubly exponential. Observe that we do not
recover however the factor $1/(1+\tau)$ in the exponent
  governing our lower bound on the
diffusion time.  

Note also that we know from \cite{MG} and \cite{BFN} that, for $H$
such that $d_{\al,L}(H_0,H)<\eps$, we have a set of invariant tori
that are doubly exponentially stable and fill a set whose complement
has measure going to~$0$ as $\eps \to 0$ (for~$r$ in the unit ball for
example).
Our next result gives an example where most of these tori are no more
than doubly exponentially stable.

\begin{theo} \label{theorem.manytori} 
Let $\al>1$ and $L>0$ be real. 
%
For any $\eps>0$, there exist $h \in \kG(\T^{n}\times\R^n\times\T)$
and a closed set $X_\eps\subset [0,1]$ with $\Leb(X_\eps)\ge 1-\eps$, such that
\begin{itemize} 
\item[(1)] $d_{\al,L}(h_0,h)<\eps$, 
\item[(2)] the Hamiltonian vector field generated by 
 $H \defeq s + h(\th,r,\tau)$ 
is complete and,
for each $r\in (X_\eps+\Z)\times\R^{n-1}$ and $s\in\R$, the torus
$\cT_{(r,s)}\subset \T^{n+1}\times\R^{n+1}$ is invariant and \diff\
for~$H$.
Here the exponent implied in~\eqref{eqdefEstar} is $\ga = \frac{1}{\al-1}$.
%
\end{itemize}
\end{theo}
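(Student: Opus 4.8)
The plan is to reduce the statement, via the suspension construction of Section~7, to a discrete analogue: it suffices to produce, for each $\eps>0$, an exact-symplectic map $T$ of $M=(\T\times\R)^n$ with $d_{\al,L}(T_0,T)<\eps$ --- where $T_0=F_0\times G_0\times\id$ (identity on the remaining $n-2$ copies of $\T\times\R$) --- together with a closed set $X_\eps\subset[0,1]$ with $\Leb(X_\eps)\ge1-\eps$, such that for every $r\in(X_\eps+\Z)\times\R^{n-1}$ the torus $\T^n\times\{r\}$ is invariant and \diff\ for~$T$, with $\ga=\frac{1}{\al-1}$ in~\eqref{eqdefEstar}. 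The suspension step then produces the Hamiltonian $H=s+h$, takes care of the completeness of the flow, and transfers the conclusion to the tori $\cT_{(r,s)}$, exactly as in the deduction of Theorem~\ref{theorem.onetorus}.

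I would construct $T$ as a limit $T=\lim_kT_k$, where $T_k$ differs from $T_{k-1}$ only inside one ``active slab'' $U_k$ and is obtained from $T_{k-1}$ by a single application of the main inductive step of Section~4 --- the step that, for a perturbation of $F_0\times G_0$ localized near one circle $\{r_1=c\}$, plants a $\sigma$-isolated periodic point of large period $q$ on the first factor, couples it with $G_0$, and thereby produces a diffusive orbit segment confined to that localization. Here $c_k$ is the class mod~$1$ of $P_k/Q_k-\om_1$ for a well-chosen rational $P_k/Q_k$, and $U_k\subset\{\,\absD{r_1-c_k}<\delta_k\,\}$. One requires $d_{\al,L}(T_{k-1},T_k)<\eps_k$ with $\sum_k\eps_k$ small, keeps the slabs $U_k$ pairwise disjoint and $\sum_k\delta_k$ small, and exploits that off $\bigcup_kU_k$ the map retains the product structure $F\times G_0$ ($F$ being the eventual perturbation of $F_0$) on which the inductive step rests --- this disjointness is what lets the step be iterated indefinitely without destroying the diffusive segments already built, and it is available precisely in the Gevrey category.

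The new point is the choice of the circles $c_k$. Split the indices into generations: generation~$m$ (indices $k\in G_m$) carries a scale $\rho_m\downarrow0$ and a budget $\eps_m$ with $\sum_m\eps_m$ small, and consists of the circles $c_k=P_k/Q_k-\om_1\pmod{1}$ with $Q_k$ in a finite window around a value $Q_m$. A Farey / Duffin--Schaeffer-type counting shows one may take $Q_m\asymp\rho_m^{-1/2}$ (up to a factor depending on $\eps_m$) so that $\Leb\bigl(\bigcup_{k\in G_m}(c_k-\rho_m,c_k+\rho_m)\bigr)\ge1-\eps_m$; the radii $\delta_k$ are then chosen $\ll\rho_m$, summable, and small enough that all the $U_k$ are disjoint. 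Put
\[
  X_\eps\;\defeq\;\Bigl(\;\bigcap_{m\ge m_0}\ \bigcup_{k\in G_m}\,[\,c_k-\rho_m,\ c_k+\rho_m\,]\;\Bigr)\ \setminus\ \bigcup_k\,(\,c_k-\delta_k,\ c_k+\delta_k\,),
\]
a closed set with $\Leb(X_\eps)\ge1-\eps$. For $r_1\in X_\eps$ one has $\absD{r_1-c_k}\ge\delta_k$ for all $k$, so $\cT\defeq\T^n\times\{(r_1,\dots,r_n)\}$ lies outside every $U_k$ and is therefore invariant with $T|_\cT=T_0|_\cT$; and for each $m\ge m_0$ there is $k\in G_m$ with $\nu_m\defeq\absD{r_1-c_k}\in[\delta_k,\rho_m]$. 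The diffusive segment planted in $U_k$ then realizes the $\nu_m$-\diff\ property at every point of $\cT$ (one shifts the $r_1$-coordinate to $\approx c_k$, lets the first factor carry the point near the planted periodic point, and invokes the coupling): by the estimates of Sections~5--6 --- the isolation being forced down to $\sigma_k\approx\ex^{-\mathrm{cst}\,Q_m^{\ga}}$, hence a period $q_k\approx\ex^{\ex^{\mathrm{cst}\,Q_m^{\ga}}}$, hence a drift speed $\sim1/q_k^2$ --- the orbit moves a distance $\ge E^*(2\nu_m)$ in time $\le E^*(\nu_m)$. Since $\nu_m\to0$, this means $\cT$ is \diff\ for~$T$.

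The main obstacle is exactly this simultaneous bookkeeping: a single list of localized perturbations must at once (i)~leave a set of tori of measure $\ge1-\eps$ entirely untouched, so that they stay invariant; (ii)~arrange that those same tori are approached by the perturbation circles $c_k$ at a controlled rate, so that they become diffusive; and (iii)~keep the whole tower of doubly-exponential estimates --- isolation, period, drift speed, time budget --- mutually consistent over all scales and all target tori simultaneously. In particular one must verify that the window of admissible denominators $Q_m$, bounded below by the covering requirement $Q_m\gtrsim\rho_m^{-1/2}$ and above by the diffusion requirement $q_k^2\,E^*(2\nu_m)\le E^*(\nu_m)$, is non-empty for all large~$m$; it is the narrowness of this window that forces the use of $E^*$, \ie of the scale $\nu/\absD{\ln\nu}$, in place of the sharper~$E$, the logarithmic loss being the price for the covering of $[0,1]$ not being able to exploit the fine Diophantine approximation of a single frequency. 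Once the disjointness of the $U_k$ and the smallness of $\sum_k\eps_k$ are in place, the induction closes and the suspension of Section~7 carries everything over to the Hamiltonian flow.
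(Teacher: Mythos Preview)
Your overall plan --- reduce to the discrete setting via suspension, then build the discrete map by layering compactly supported perturbations around a dense family of circles --- is the paper's strategy. But your execution diverges from the paper in one place, and that place contains a genuine error.

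The paper does \emph{not} place the perturbation centres at points $P_k/Q_k-\om_1$. It places them by a purely combinatorial nested scheme: at level~$i$ it puts at most~$N_i$ arcs of radius $\nu_i = 1/(N_i\ln N_i)$ in the gaps left by levels $<i$, so that they are $\tilde\nu_i=1/N_i$-dense there; disjointness across levels is then automatic, and $X_\eps$ is simply the complement of all the arcs. No Farey or Duffin--Schaeffer counting is needed. A rational $P/Q$ \emph{does} enter, but only \emph{inside} Proposition~\ref{prop.isolation}, where it is selected after the arc centre~$\bar r$ and radius~$\nu$ are already fixed, with $Q\in(2/\nu,3/\nu)$ (Lemma~\ref{lemchoiceofQ}); it is this $Q\sim 1/\nu$, not your Farey denominator~$Q_m$, that governs the isolation $\sig\le\ex^{-2c\nu^{-\ga}}$.

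This is where your estimates break. You set $Q_m\asymp\rho_m^{-1/2}$ and then write $\sigma_k\approx\ex^{-cQ_m^{\ga}}$, $q_k\approx\exp\exp(cQ_m^\ga)=\exp\exp(c\,\rho_m^{-\ga/2})$. But the drift produced in time $q_k^3$ is only of order~$q_k$, and $\exp\exp(c\,\rho_m^{-\ga/2})\ll E^*(2\rho_m)\approx\exp\exp\big(C\absD{\ln\rho_m}^\ga\rho_m^{-\ga}\big)$: the required drift is not achieved. The period is in fact a \emph{free} parameter in Proposition~\ref{prop.isolation} (any $\ell\ge 6/\nu$), and in Proposition~\ref{prop.diffusion.discrete} it is tuned so that $q^3\approx E_{3c\ga,\ga}(\nu)$; what is forced by the construction is the \emph{radius}~$\nu$, not~$Q_m$. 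To get both $\tau_k\le E^*(\rho_m)$ and $\tau_k^{b}\ge E^*(2\rho_m)$ one is pushed to $\de_k\asymp\rho_m/\absD{\ln\rho_m}$ --- exactly the paper's $\nu_i=\tilde\nu_i/\ln N_i$. With $\de_k$ that large and $\sim 1/\rho_m$ centres per generation, each generation already occupies measure $\sim 1/\absD{\ln\rho_m}$, so the generations cannot be laid down independently: the paper places each inside the gaps of the previous ones, which is what makes your asserted pairwise disjointness of the~$U_k$ actually hold.

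One further point you gloss over: the paper does not simply appeal to ``product structure off $\bigcup_k U_k$''. It compares the full map~$T$ with the level-$i$ map~$T_i$ via an explicit Lipschitz estimate (Corollary~\ref{lemCompFlows}), using that the sum of the norms at levels $>i$ is $\le 2N_{i+1}\xi_{i+1}\le 2\cdot 3^{-\tau_i}$, so the accumulated error over $\tau_i$ iterates is bounded. You should make that step explicit rather than assume the later perturbations are invisible.
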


Both theorems will be proved in Section~\ref{sec.flows} by suspension
of analogous results which deal with exact-symplectic map and
which we state in the next section.
In fact, the union $\T^{n+1} \times (X_\eps+\Z) \times \R^n$ of all the tori
mentioned in Theorem~\ref{theorem.manytori} will be shown to be itself
\diff\ with exponent $\ga = \frac{1}{\al-1}$. 

As mentioned in Section~\ref{sec:heurist}, the unstable orbits
  which we will construct to prove our diffusiveness statements are in
fact bi-asymptotic to infinity: 
we will see that their $r_2$-coordinates travel from~$-\infty$
to~$+\infty$.


\subsection{Exact-symplectic maps}     \label{secSymplMaps}





Let $\om=(\om_1,\om_2) \in \R^2$.
Recall that $\T=\R/\Z$.
We set $M_1 \defeq \T \times \R$ and $M_2 \defeq \T \times \R$ and
define $F_0 \col M_1 \righttoleftarrow$ and
$G_0 \col M_2 \righttoleftarrow$ by
\beglab{eq:defFzGz}
F_0(\th_1, r_1) \defeq (\th_1+\om_1+r_1+\Z, r_1), \quad
G_0(\th_2, r_2) \defeq (\th_2+\om_2+r_2+\Z, r_2), 
\edla
%
and we set
\begla
T_0 \defeq F_0 \times G_0 \col M_1 \times M_2 \righttoleftarrow
\edla
Using the identification $M_1\times M_2 \simeq \T^2\times \R^2$, we
call $\cT_0$ the torus $\T^2\times \{(0,0)\}$. 
This torus is invariant by $T_0$ and the restricted dynamics on it is the translation of vector $\om$.
More generally we set 
\[
\cT_{(r_1,r_2)} 
\defeq \T^2\times \{(r_1,r_2)\}
\quad \text{for any $(r_1,r_2) \in \R^2$.}
%
\]

We say that a function is \textit{flat} at a point or on a subset, if
it vanishes together with all its partial derivatives of all orders there.
Our first result for the discrete case is that one can find a Gevrey perturbation of the
integrable twist map~$T_0$ that is flat at the torus~$\cT_0$ (which
thus stays invariant) and for which this invariant torus is diffusive.

From now on,
when a function~$H$ on a symplectic manifold generates a complete
Hamiltonian vector field, we denote by $\Phi^{H}$ the time-$1$ map of the flow
(note that $t \mapsto \Phi^{tH}$ is then the continuous time flow
generated by~$H$).
Thus, endowing $M_1 \times M_2 \simeq \T^2\times \R^2$ with the
symplectic form $\dd\th_1\wedge\dd r_1 +\dd\th_2\wedge\dd r_2 $,
we can write
\beglab{eq:TzasPhihz}
T_0 = \Phi^{\om_1 r_1+\om_2 r_2 + \demi(r_1^2+r_2^2)}.
\edla

\begin{theo} \label{theorem.onetorus.discrete}   
Let $\al>1$ and $L>0$ be real.
For any $\eps>0$ there exist $u\in G^{\al,L}(M_1)$ and
$v\in G^{\al,L}(M_1\times M_2)$ such that
\begin{itemize} 
\item[(1)]   $u$ and~$v$ are flat for $r_1=0$, 
\item[(2)]  $\normD{u}_{\al,L}+\normD{v}_{\al,L} < \eps$,
\item[(3)] $\cT_{0}$ is invariant and diffusive for $T \defeq \Phi^v
  \circ \Phi^u  \circ T_0$,
with exponent $\ga=\frac{1}{\al-1}$ 
in~\eqref{eqdefE}.
\end{itemize}
\end{theo}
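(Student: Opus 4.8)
The plan is to build $T$ as an infinite composition of compactly supported perturbations, one for each term of a rapidly decreasing sequence $r_1 = \rho_1 > \rho_2 > \cdots \to 0$, localized near circles $\{r_1 = \rho_n\}$ in the first factor and arranged so that (a) the supports are pairwise disjoint and disjoint from $\{r_1 = 0\}$, hence the perturbations commute and the total perturbation is flat at $\cT_0$; (b) for each $n$, the $n$-th perturbation produces a Herman synchronized-diffusion segment of orbit starting at distance $<\nu_n$ from $\cT_0$ (with $\nu_n \sim \rho_n$) and drifting by at least $E(2\nu_n)$ in the $r_2$-coordinate in time at most $E(\nu_n)$; and (c) the Gevrey norms are summable with sum $<\eps$. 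Assertion (c) forces the norm of the $n$-th bump to be $\le 2^{-n}\eps$, which via the standard Gevrey-$(\al,L)$ estimates for compactly supported functions caps the spatial localization scale $\sigma_n$ of the $n$-th periodic point at something exponentially small in a power of $1/\rho_n$ — precisely $\sigma_n \sim \exp(-c\,\rho_n^{-1/(\al-1)})$, which is where the exponent $\ga = \tfrac1{\al-1}$ enters.

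The construction of the first factor $F$ proceeds in two tools, exactly as outlined in Section 2. First I would perturb $F_0$ near the rational circle $\{r_1 = P_n/Q_n\}$, where $P_n/Q_n$ is a convergent of $\om_1$ (or, if $\om_1$ is rational, any good rational approximation of a nearby level) chosen so that $\rho_n := |\om_1 - P_n/Q_n|$ is the target distance; using Herman's trick, embed inside a Gevrey perturbation of the linear twist a circle diffeomorphism carrying a $\sigma_n$-logarithmically-isolated periodic point $p_n$ of period $q_n$, with $q_n$ taken exponentially large in $1/\sigma_n$ — hence doubly exponentially large in $1/\rho_n$. This produces $u = \sum_n u_n \in G^{\al,L}(M_1)$ with $u_n$ flat at $r_1=0$, supported in a thin annulus around $\{r_1=\rho_n\}$, and $\normD{u_n}_{\al,L} \le 2^{-n-1}\eps$. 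Second, I would apply the coupling lemma of Section 6: at each $p_n$ one couples $F$ with $G_0$ by a compactly supported Gevrey function $v_n$, pushing orbits in $M_2$ up by $1/q_n$ along a fixed vertical, so that the $q_n$-th iterate of $\Phi^{v_n}\circ\Phi^{u_n}\circ T_0$ restricted to $\{p_n\}\times M_2$ drifts linearly; after $\sim q_n^3$ steps the $r_2$-coordinate has moved by $\sim q_n$, which is $\ge E(2\nu_n)$ while $q_n^3 \le E(\nu_n)$. Because $v_n$ is supported near $\mathrm{orb}(p_n)\times M_2$ and the direct-product structure of $F$ with $G_0$ is preserved on $(\T\times\{|r_1|<\rho_n/2\})\times M_2$, the couplings do not interfere, so $v = \sum_n v_n \in G^{\al,L}(M_1\times M_2)$ is flat at $r_1=0$ with $\normD{v}_{\al,L}<\eps/2$.

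Taking $T = \Phi^v\circ\Phi^u\circ T_0$ then gives (1) and (2) immediately from flatness and summability, and (3) follows because for the sequence $\nu_n\to 0$ each initial condition $\hat z_n := (p_n, \cdot) $ on the appropriate vertical realizes the $\nu_n$-diffusiveness of a point $z_n$ with $\Dist(z_n,\cT_0)<\nu_n$; diffusiveness of all of $\cT_0$ is obtained by observing (as in Section 4.1) that one can translate the whole construction in the $\th$-variables, or more simply that the diffusive estimate near a single point propagates to the torus by the rotational symmetry of the localized picture. The main obstacle is the bookkeeping in step (b)–(c): one must choose the three sequences $\rho_n, \sigma_n, q_n$ and the supports consistently so that the localization is fine enough for the bumps to be Gevrey-small and mutually disjoint, yet coarse enough (logarithmically isolated) that a compactly supported Gevrey coupling of the required amplitude $1/q_n$ actually exists — this is precisely the tension between the "singular" scale $r_n\to 0$ and the Gevrey-dictated exponential cost, and it is what pins the exponent to $\ga = \tfrac{1}{\al-1}$ and prevents any arithmetic gain. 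The remaining technical point, deferred to the Appendix, is the composition estimate showing $\Phi^v\circ\Phi^u\circ T_0$ stays within $d_{\al,L}$-distance $\eps$ of $T_0$ and is well-defined as a genuine exact-symplectic map despite the infinite sum.
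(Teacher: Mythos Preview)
Your overall architecture matches the paper's proof closely: package the single-scale construction as a black box (the paper's Proposition~\ref{prop.diffusion.discrete}), apply it at a sequence $\bar r^{(n)}=2\nu_n$ with $\nu_n\to 0$ geometrically, sum the resulting $u_n,v_n$ with disjoint supports, and read off (1)--(3). The quantitative bookkeeping you sketch (Gevrey norm $\le e^{-c\nu_n^{-\gamma}}$, period $q_n$ doubly exponential, diffusion time $q_n^3\le E(\nu_n)$, drift $\ge q_n\ge E(2\nu_n)$) is also the paper's.

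There is one genuine gap. For $\cT_0$ to be $\nu_n$-diffusive, \emph{every} $z\in\cT_0$ needs a $\nu_n$-close $\hat z$ whose $T$-orbit drifts; a single diffusing point $(p_n,\cdot)$ does not suffice. Your proposed fixes do not work: translating the construction in~$\theta$ produces a \emph{different} map, not the fixed~$T$, and after perturbation there is no rotational symmetry to invoke. The paper handles this by an explicit density argument: the periodic point~$p_n$ has period $q_n\gg 1/\nu_n$, and a short initial segment of its $F$-orbit (iterates $2/\nu_n\le s\le 6/\nu_n$) is already $2\nu_n$-dense in the annular strip $\cV(\bar r^{(n)},\nu_n)$ (Proposition~\ref{prop.isolation}(4)). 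Combined with the explicit form of the coupled orbit~\eqref{eq:Torb}, one shows (Lemma~\ref{lem:easydense}) that the set $\{(F^s(p_n),z_2^{(n,s)})\}$ is $3\nu_n$-dense in the whole $\cV(\bar r^{(n)},\nu_n)\times M_2$, so every point of~$\cT_0$ sits $5\nu_n$-close to a diffusing orbit of the \emph{same}~$T$.

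A minor point: your last paragraph is off target. Condition~(2) asks only for $\|u\|_{\al,L}+\|v\|_{\al,L}<\eps$, which follows directly from summability; no composition estimate for the distance $d_{\al,L}(T,T_0)$ is needed here (that machinery is used in the proof of Theorem~\ref{theorem.manytori.discrete}, where later perturbations must be shown not to spoil earlier diffusive orbits).
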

Here, when we write~$\Phi^u$ with a function $u\col M_1\to \R$, we
view~$u$ as defined on 
$M_1\times M_2$ (and independent of the
variables~$\th_2$ and~$r_2$) and thus mean 
$\Phi^u \col M_1\times M_2 \righttoleftarrow$.

Our next result is a strengthening of
Theorem~\ref{theorem.onetorus.discrete}, in which we find a
perturbation that keeps invariant most of the tori of~$T_0$ while
insuring that they do become diffusive.

We will see
that, since the construction of~$u$ and~$v$ is
completely local, one can insure that in addition they 
%
%
are $1$-periodic in~$r_1$. 
If now $X\subset [0,1]$ is a closed set and if $u$ and~$v$ are flat
for $r_1 \in X$, then all the tori of the form $\cT_{(r_1,r_2)}$,
$r_1 \in X+\Z$, 
that are invariant by~$T_0$, are also invariant by
$T= \Phi^v \circ \Phi^u \circ T_0$ and carry the same translation
dynamics of vector $\om+(r_1,r_2)$. 
If moreover $u$ and~$v$ are such that there are diffusive orbits
for~$T$ on sufficiently dense scales in the neighborhood of the tori
$\cT_{(r_1,r_2)}$, $r_1 \in X+\Z$, then all these tori will be diffusive.
This is the content of the next result, in which we also control the
measure of the complement of the invariant tori (in bounded regions).

\begin{theo} \label{theorem.manytori.discrete} 
Let $\al>1$ and $L>0$ be real.
For any $\eps>0$ there exist $u\in G^{\al,L}(M_1)$ and
$v\in G^{\al,L}(M_1\times M_2)$ that are $1$-periodic in $r_1$,
and a closed set $X_\eps \subset [0,1]$ with 
$\Leb(X_\eps)\ge 1-\eps$,
so that
\begin{itemize} 
\item[(1)]   $u$ and~$v$ are flat for $r_1 \in X_\eps+\Z$. 
\item[(2)]  $\normD{u}_{\al,L}+\normD{v}_{\al,L} < \eps$,
\item[(3)] for each $(r_1,r_2) \in (X_\eps+\Z)\times\R$, the torus
  $\cT_{(r_1,r_2)}$ is invariant and \diff\ for
  $T \defeq \Phi^v \circ \Phi^u \circ T_0$,
with exponent $\ga=\frac{1}{\al-1}$ in~\eqref{eqdefEstar}.
\end{itemize}
\end{theo}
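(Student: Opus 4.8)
The plan is to derive Theorem~\ref{theorem.manytori.discrete} from the main inductive step announced for Section~4, exactly as Theorem~\ref{theorem.onetorus.discrete} will be, but carrying along a \emph{Cantor-like parameter set} in the first action variable. Concretely, the inductive step produces, for each admissible period $q$ (equivalently, for each scale $\nu_n\to0$), a compactly supported Gevrey perturbation of $F_0\times G_0$ --- a pair $(u_n,v_n)$ supported in a thin annular region $\{|r_1-\rho_n|<\delta_n\}\times M_2$ --- which creates one logarithmically isolated periodic point $p_n$ of $F$ on the first factor at distance $\rho_n$ from $\{r_1=0\}$, couples its orbit with $G_0$, and thereby yields a segment of orbit that drifts in $r_2$ by an amount doubly exponentially large in $1/\rho_n$ in a time doubly exponentially large in $1/\rho_n$. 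Since these perturbations are supported in pairwise disjoint thin annuli (one only needs $\rho_n\to0$ fast enough and $\delta_n$ small enough, which the isolation estimate permits), the total perturbation $u=\sum u_n$, $v=\sum v_n$ converges in $\kG$, has norm $<\eps$ after rescaling, and $T=\Phi^v\circ\Phi^u\circ T_0$ is diffusive on $\cT_0$: the scales $\nu_n\to0$ witness $\nu_n$-diffusiveness. The $1$-periodicity in $r_1$ is arranged by periodizing the construction, i.e. replicating each localized perturbation in every unit interval $[k,k+1]$ of $r_1$; this is harmless because everything is compactly supported away from the integers chosen as centers.

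For the ``many tori'' strengthening, the point is that the construction around the torus $\cT_{(r_1^0,r_2)}$ is formally identical to the one around $\cT_0$ after the translation $r_1\mapsto r_1-r_1^0$ (the twist map $F_0$ is translation-covariant up to changing $\om_1$ into $\om_1+r_1^0$, and $G_0$ is untouched): the periodic points we need to produce on the first factor sit at distances $\rho$ from the circle $\{r_1=r_1^0\}$, and the isolation/period estimates depend only on Gevrey data, not on $r_1^0$. So, fixing a summable sequence of ``budgets'', one builds a nested sequence of closed sets: at stage $n$ one removes, around finitely many values of $r_1$, short open intervals on which a perturbation of the $n$-th type is switched on, each interval of length controlled so that the total removed measure is $<\eps$; the remaining closed set is $X_\eps$, with $\Leb(X_\eps)\ge1-\eps$. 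The perturbations are chosen flat on $\{r_1\in X_\eps+\Z\}$ --- concretely $u,v$ vanish to infinite order there because their supports avoid a neighborhood of $X_\eps+\Z$ in the $r_1$-direction --- so every torus $\cT_{(r_1,r_2)}$ with $r_1\in X_\eps+\Z$ remains invariant with unchanged translation dynamics, as in the discussion preceding the statement. The reason the exponent degrades from $E$ to $E^*$ is arithmetic crowding: to diffuse from scale $\nu$ near a torus $\cT_{(r_1,r_2)}$ one must place a periodic point at distance $\sim\nu$ while keeping the perturbation's $r_1$-support disjoint from $X_\eps+\Z$ and from all previously used annuli; the available room is of order $\nu/|\ln\nu|$ rather than $\nu$, which feeds exactly the substitution $\nu\mapsto\nu/|\ln\nu|$ in~\eqref{eqdefEstar}.

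I would organize the argument in three steps. First, \textbf{a counting/placement lemma}: choose, for each $n$, a finite set $\Xi_n\subset[0,1]$ of base points $r_1^0$ together with radii $\delta_n(r_1^0)>0$ such that the annuli $\{|r_1-r_1^0|<\delta_n\}$ (over all $n$ and all points, and their integer translates) are pairwise disjoint, the union of the intervals $(r_1^0-\delta_n,r_1^0+\delta_n)$ has measure $<\eps$, and the $\delta_n$ are compatible with the isolation parameter the inductive step demands at the corresponding scale; set $X_\eps=[0,1]\setminus\bigcup_n\bigcup_{r_1^0\in\Xi_n}(r_1^0-\delta_n,r_1^0+\delta_n)$ and verify $\Leb(X_\eps)\ge1-\eps$ and that $\Xi_n$ is $\delta_n/\!\!$-dense enough that every point of $X_\eps+\Z$ has, at each scale, a nearby base point at distance $\lesssim\nu/|\ln\nu|$. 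Second, \textbf{applying the inductive step} at each $(n,r_1^0)$ to obtain localized $(u_{n,r_1^0},v_{n,r_1^0})$, summing them (convergence in $\kG$ and smallness of the norm follow from disjoint thin supports plus the Gevrey estimates of the Appendix), periodizing in $r_1$, and checking flatness on $X_\eps+\Z$. Third, \textbf{verifying \diff-ness}: for $r_1\in X_\eps+\Z$ and any scale $\nu_n$, the orbit segment produced by the perturbation attached to the nearest base point starts within $\nu_n$ of $\cT_{(r_1,r_2)}$ and, after a time $\le E^*(\nu_n)$, reaches $r_2$-distance $\ge E^*(2\nu_n)$; then invoke~\eqref{ineqEElaN}-type inequalities to absorb the constants coming from ``nearest base point'' rather than the point itself. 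The main obstacle is the placement lemma together with its interaction with the inductive step: one must verify that a doubly-exponentially small isolation $\sigma$ still leaves enough transverse room $\delta_n$ to fit \emph{all} the required annuli disjointly into the complement of a set of full-minus-$\eps$ measure \emph{and} to keep them from accumulating on $X_\eps+\Z$, which is precisely what forces the $\nu/|\ln\nu|$ loss and hence the replacement of $E$ by $E^*$; making these quantitative choices consistent across all scales simultaneously is the delicate bookkeeping at the heart of the proof. The suspension to Theorem~\ref{theorem.manytori} is then routine via Section~7.
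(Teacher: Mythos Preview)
Your plan is essentially the paper's own proof (Section~4.2): a nested Cantor-type placement of the Proposition~\ref{prop.diffusion.discrete} ``bricks'' at scales $\nu_i = \tilde\nu_i/\absD{\ln\tilde\nu_i}$ with $\tilde\nu_i=1/N_i$, periodized in~$r_1$, summed in Gevrey norm, with $X_\eps$ the complement of the removed arcs; the $E\to E^*$ loss you identify is exactly the paper's reason---the brick width~$\nu_i$ must be $\tilde\nu_i/\ln N_i$ so that $N_i\cdot 2\nu_i$ is summable in~$i$.

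Two small corrections. First, your worry about ``keeping them from accumulating on $X_\eps+\Z$'' is misplaced: the supports \emph{do} accumulate on~$X_\eps$ (which is, after all, the complement of their union), and this is harmless once the Gevrey norms are summable---flatness of $u,v$ on~$X_\eps$ follows from termwise flatness plus $C^\infty$-convergence, not from any gap. Second, the paper spends part~e) on a flow-comparison estimate (Corollary~\ref{lemCompFlows}) to show that the finer-scale perturbations $\sum_{i'>i}u_{i'},\,\sum_{i'>i}v_{i'}$ do not destroy the scale-$i$ diffusive orbit over its doubly-exponential lifetime~$\tau_i$; you should either invoke that, or observe explicitly that since your annuli are pairwise disjoint across \emph{all} scales, $T$ restricted to a scale-$i$ annulus literally coincides with the scale-$i$ map and no comparison is needed.
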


In fact, in~(3), the union $\T\times(X_\eps+\Z)\times M_2$ of all
these tori will be shown to be itself \diff\ for~$T$ with exponent $\ga=\frac{1}{\al-1}$ .


\begin{rem}   \label{rem:discretemultidim}
  As immediate corollaries, we get multidimensional versions of
  Theorems~\ref{theorem.onetorus.discrete}
  and~\ref{theorem.manytori.discrete}, in $\T^n\times \R^n$ with any
  $n\ge 3$, simply by taking direct product of the previous discrete
  systems with factors of the form $\Phi^{\om_i r_i + \demi r_i^2}$,
  $i\ge3$:
identifying $\T^n\times\R^n$ with $M_1\times\cdots\times
M_n$, where $M_i \defeq \T\times \R$ for each~$i$,
and setting
\[
T_0 \defeq \Phi^{h_0} \col \T^n\times\R^n \righttoleftarrow
\]
(with the same~$h_0$ as in~\eqref{eq:defhz}---this is thus a
generalization of~\eqref{eq:TzasPhihz}),
the statements of Theorems~\ref{theorem.onetorus.discrete}
and~\ref{theorem.manytori.discrete} hold \textit{verbatim} with this
new interpretation of~$T_0$ except that, in condition~(3) of
Theorem~\ref{theorem.manytori.discrete}, $\cT_{(r_1,r_2)}$ is to be
replaced with $\T^n\times\{r\}$ for arbitrary
$r\in (X_\eps+\Z)\times\R^n$
  and the functions~$u$ and~$v$ are to be viewed as functions on
  $\T^n\times\R^n$.
\end{rem}


\begin{rem}
To prove the above discrete-time diffusiveness statements, we will
exhibit orbits $(T^k\hz)_{k\in\Z}$ which satisfy the first requirement
of Definition~\ref{defDiffusive}, 
$d(T^t \hz,z)\ge E(2\nu)$ with a certain positive integer $t\le E(\nu)$,
for smaller and smaller positive values of~$\nu$.
We will see that, in fact, they even satisfy
$d(T^{j t} \hz,z)\ge \absD{j} E_{C,\ga}(2\nu)$ for all $j\in\Z$
and are thus bi-asymptotic to infinity, and more precisely their
$r_2$-coordinates grow linearly by an exact amount~$1/q$ after~$q$
iterates, where $q=t^{1/3}$ is integer.
\end{rem}

 
\section{The main building brick : localized diffusive orbits} 
\label{sec:mainmldbr}


We now fix real numbers $\al>1$ and $L>0$ once for all.
We also fix $\om\in\R^2$ and work with
$T_0 = \Phi^{\om_1 r_1+\om_2 r_2 + \demi(r_1^2+r_2^2)} \col M_1\times M_2 \righttoleftarrow$
as in Section~\ref{secSymplMaps}.

To prove Theorems~\ref{theorem.onetorus.discrete}
and~\ref{theorem.manytori.discrete}, and then the continuous time
versions of these, we will use the following building brick,
where we use the notation
\[
\cV(r,\nu) \defeq \T \times (r-\nu,r+\nu) \subset \T\times \R
\quad \text{for any $r \in \R$ and $\nu >0$.}
\]


\begin{prop} \label{prop.diffusion.discrete}
%
%
{Let $\ga \defeq \frac{1}{\al-1}$ and $b\defeq \bquart$. 
There exists $c = c(\al,L)$ 
such that},
for any $\nu>0$ small enough and $\br \in \R$,
there exist $u\in G^{\al,L}(M_1)$ and $v\in G^{\al,L}(M_1\times M_2)$ such that
\begin{enumerate}[(1)]
\item   $u\equiv 0$ on $\cV(\br,\nu)^c$ and $v\equiv 0$ on $\cV(\br,\nu)^c \times M_2$,
\item  $\normD{u}_{\al,L}+\normD{v}_{\al,L} 
%
%
{\le \ex^{-c \nu^{-\ga}}}$,
%
%
\item the set $\cV(\br,\nu) \times M_2$ is invariant and $\big(3\nu,\tau,\tau^{b}\big)$-diffusive 
%
%
for $T \defeq \Phi^v \circ \Phi^u \circ T_0$,
{where 
$\tau \defeq E_{3c\ga,\ga}(\nu)$}. 
%
%
\end{enumerate}
\end{prop}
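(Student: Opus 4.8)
The plan is to implement Herman's synchronized diffusion mechanism in a window of width $\nu$ around the circle $r_1 = \br$, using only perturbations supported in $\cV(\br,\nu)$. First I would handle the dynamics on the first factor $M_1$. The building-block lemma from Section 5.1 (perturbation of circular rotations) produces, for each large integer $Q$ with $|\om_1 - P/Q|$ comparable to $\nu$, a Gevrey-$(\al,L)$ perturbation $u_0$ of a rigid rotation creating a $\sigma$-isolated periodic point $p$ of any prescribed large period $q$, with $\sigma$ exponentially small in $Q$ — hence, since $Q \sim \nu^{-1}$ up to constants, $\sigma \sim \ex^{-c_1 \nu^{-\ga}}$ for a suitable $c_1=c_1(\al,L)$ (the Gevrey regularity dictates the exponent $\ga = 1/(\al-1)$ on the width-versus-isolation tradeoff). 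Then Herman's embedding trick (Section 5.2) inserts this circle dynamics into a perturbation $F = \Phi^u \circ F_0$ of the twist map, localized in $\cV(\br,\nu)$, so that the isolated periodic point $p$ sits at distance $r_p \in (\br-\nu,\br+\nu)$ from the circle $r_1 = 0$, with $u$ flat (indeed identically zero) outside $\cV(\br,\nu)$ and $\normD{u}_{\al,L} \le \ex^{-c_1\nu^{-\ga}}$. The period $q$ is then chosen exponentially large in $1/\sigma$, i.e. $q \sim \ex^{c_2 \ex^{c_1 \nu^{-\ga}}}$, which is the source of the double exponential.

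Next I would invoke the coupling lemma of Section 6 to couple $F$ with the integrable linear twist $G_0$ on $M_2$: a Gevrey coupling function $v$, supported in (a neighborhood of the orbit of $p$ times $M_2$) $\subset \cV(\br,\nu)\times M_2$, with $\normD{v}_{\al,L}$ also bounded by $\ex^{-c_1\nu^{-\ga}}$, such that the $q$-th iterate of $T = \Phi^v \circ \Phi^u \circ T_0$ drifts the point $\{p\}\times\Delta$ by exactly $1/q$ in the $r_2$-coordinate. Since the coupling is localized around the orbit of $p$ and the direct-product structure $F \times G_0$ is preserved on smaller neighborhoods of $\T\times\{0\}$, the torus $\cV(\br,\nu)\times M_2$ stays invariant (this gives (1) and invariance in (3)); adjusting the constants and taking $c = c(\al,L)$ slightly smaller than $c_1$ gives (2). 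For the diffusion estimate, after $q^2$ iterates the $r_2$-coordinate has moved by $1$, after $q^3$ by $q$; so with $t = q^3$ and $q = t^{1/3}$ one has a point $\hz$ with $d(\hz,z)\le 3\nu$ (the factor $3$ absorbing the orbit and the coupling support) and $d(T^t\hz,z) \ge q$. It remains to check the two inequalities $t \le \tau$ and $q \ge E(6\nu)$ with $\tau = E_{3c\ga,\ga}(\nu)$: since $t = q^3$ and $q$ is doubly exponential of the form $\ex^{c_2\ex^{c_1\nu^{-\ga}}}$, choosing the constant in $q$ appropriately and using \eqref{ineqEElaN} makes both hold for $\nu$ small — this is where the precise bookkeeping of the constants $c, c_2$ versus the $3c\ga$ in $\tau$ and the $2\cdot 3\nu = 6\nu$ in $E(2\cdot 3\nu)$ must be done carefully.

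The main obstacle I expect is not any single estimate but the \emph{compatibility of the localization with the arithmetic constraint}: one needs a good rational approximation $P/Q$ of $\om_1$ with $1/Q$ of the right order relative to $\nu$ (so that the embedded periodic circle of the linear twist actually lies inside the window $\cV(\br,\nu)$ and not closer to $r_1=0$), \emph{simultaneously} with $Q$ large enough that Section 5.1 applies, \emph{and} with everything fitting in a support that does not touch $r_1 = 0$. Because $\om_1$ is arbitrary (not assumed Diophantine), one cannot control $Q$ from below in terms of $\nu$ by arithmetic means; the resolution, as the heuristics in Section 2 indicate, is that the Gevrey cutoff forces the isolation to be $\ex^{-c\nu^{-\ga}}$ regardless, so one simply takes $Q$ as large as needed and accepts this bound — the delicate point being to verify that $r_p = |\om_1 - P/Q| + (\text{small})$ can indeed be placed anywhere in $(\br-\nu,\br+\nu)$ while keeping the support clear of $r_1 = 0$. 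The second, more technical obstacle is propagating all Gevrey norm bounds through the compositions $\Phi^v\circ\Phi^u\circ T_0$ and through Herman's embedding; these are handled by the Gevrey estimates collected in the Appendix, but must be tracked so that the final constant $c$ depends only on $\al$ and $L$.
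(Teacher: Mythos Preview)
Your overall strategy matches the paper's (Proposition~\ref{prop.isolation} for~$u$, then the coupling Lemma~\ref{lemcoupl} for~$v$), but two points need correction.

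First, a slip that in fact dissolves your ``main obstacle'': the rational $P/Q$ must approximate $\om_1+\br$, not~$\om_1$, since the $F_0$-periodic circle of rotation number $P/Q$ sits at $r_1=P/Q-\om_1$ and must lie near~$\br$ (which is arbitrary in the proposition). Once this is corrected there is no arithmetic subtlety whatsoever: the paper simply takes~$Q$ to be a prime in $(2/\nu,3/\nu)$ (Prime Number Theorem) and~$P$ any integer in $\big(Q(\om_1+\br)-Q\nu/2,\,Q(\om_1+\br)+Q\nu/2\big)$ not divisible by~$Q$, obtaining $|P/Q-(\om_1+\br)|<\nu/2$ and $Q\sim 1/\nu$ for \emph{any} $\om_1$ (Lemma~\ref{lemchoiceofQ}). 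The isolation $\sig\sim\ex^{-c\nu^{-\ga}}$ then follows directly from $Q\sim 1/\nu$ and the Gevrey bump estimates.

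Second, and this is a genuine gap: you have not explained why \emph{every} $z\in\cV(\br,\nu)\times M_2$ admits a diffusing~$\hz$ within~$3\nu$. The coupling produces a \emph{single} diffusing $T$-orbit, namely $\{(F^s(\sz),\ssz{n,s}):n,s\in\Z\}$ with $\ssz{n,s}=(sn/q+\Z,\,-\om_2+n/q)$; the $(3\nu,\tau,\tau^b)$-diffusive property requires this orbit to be $3\nu$-dense in the whole strip. This density is an honest additional ingredient. On the $M_1$ side one must arrange, already in the construction of~$u$, that a short segment $\{F^s(\sz):2/\nu\le s\le 6/\nu\}$ of the periodic orbit is $2\nu$-dense in $\cV(\br,\nu)$; this is Property~(4) of Proposition~\ref{prop.isolation}, itself coming from the last clause of Proposition~\ref{propA} and requiring $\ell\ge 2Q$. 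On the $M_2$ side one must then show that, for each such~$s$ and each target $(\th_2,r_2)$, there exists~$n$ with $\ssz{n,s}$ close to it; this needs $\nu q$ large and a short but non-automatic counting argument (Lemma~\ref{lem:easydense}). Your parenthetical ``the factor~$3$ absorbing the orbit and the coupling support'' does not cover this step. (Also, the diffusion target in Condition~(3) is $\tau^b$ with $b=\quart$, not $E(6\nu)$; one tunes~$\ell$ so that $q^3\le\tau$ while $q\ge\tau^{b}$.)
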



In Condition~(3) of the statement, we have used a refinement of
Definition~\ref{defDiffusive}: 
%
%
we say that a subset~$X$ of~$M$ is \textit{$(\ti\nu,\tau,A)$-diffusive
  for~$T$} if, for every point~$z$ of~$X$, there exist 
$\hz \in M$ and~$t$ integer such that 
$d(\hz,z)\le \ti\nu$, $t\le \tau$ 
and $d(T^t \hz,z)\ge A$.



The proof of Proposition~\ref{prop.diffusion.discrete} is in
Sections~\ref{secIsolPerPts} and~\ref{secSynchrDiffH}.
(The reader will see that our choice of $b=\bquart$ is quite arbitrary: any positive
number less than~$\tiers$ would do.)


%
\subsection{Proof that Proposition~\ref{prop.diffusion.discrete}
  implies Theorem~\ref{theorem.onetorus.discrete}}


We are given $\eps>0$ and, without loss of generality, we can assume
that~$\eps$ is small enough so that we can apply
Proposition~\ref{prop.diffusion.discrete} for every $n\ge1$ with the
following values of~$\nu$ and~$\br$:
\[
\nu_n \defeq (c\ga)^{1/\ga} 10^{-n} \eps,
\quad
\brn \defeq 2 \nu_n.
\]
We thus obtain $u_n\in G^{\al,L}(M_1)$, supported in $\cV(\brn,\nu_n)$,
and $v_n\in G^{\al,L}(M_1\times M_2)$, supported in $\cV(\brn,\nu_n) \times M_2$,
which also satisfy Conditions~(2) and~(3) of Proposition~\ref{prop.diffusion.discrete}.
Observe that for any two different values of~$n$ the supports are
disjoint
(because $\brn-\nu_n > \brnp+\nu_{n+1}$).

Since 
%
%
\beglab{ineqexp}
\ex^{x} > x
\quad\text{and}\quad
\ex^{-x} < x\ii \quad \text{for all $x>0$,}
%
\edla
%
we have 
%
\beglab{ineqexcnuga}
\ex^{-c \nu_n^{-\ga}} = \big(\ex^{-c\ga \nu_n^{-\ga}}\big)^{1/\ga} 
< \big(c\ga \nu_n^{-\ga}\big)^{-1/\ga} = (c\ga)^{-1/\ga} \nu_n,
%
\edla
hence the formulas
$u \defeq \sum_{n=1}^\infty u_n$ and $v\defeq \sum_{n=1}^\infty v_n$
define functions $u\in G^{\al,L}(M_1)$ and $v\in G^{\al,L}(M_1\times M_2)$
with $\normD{u}_{\al,L}+\normD{v}_{\al,L} \le \sum \ex^{-c \nu_n^{-\ga}}
< \eps$.
Moreover, since the~$u_n$'s, $v_n$'s and all their partial derivatives
vanish for $r_1=0$, the same is true for~$u$ and~$v$.
The functions~$u$ and~$v$ thus satisfy properties~(1)--(2) of Theorem
\ref{theorem.onetorus.discrete}.

We claim that $T= \Phi^{v} \circ \Phi^u \circ T_0$ also satisfies
property~(3) of Theorem \ref{theorem.onetorus.discrete}.
Indeed, the disjointness of the supports implies that~$T$ coincides
with $\Phi^{v_n} \circ \Phi^{u_n}\circ T_0$ on
$\cV(\brn,\nu_n) \times M_2$.
Now, for each $z\in\cT_0$ and $n\ge1$, we can pick $\bz \in
\cV(\brn,\nu_n)\times M_2$ such that $d(\bz,z) < 2\nu_n$,
%
%
and then, by~(3) of Proposition~\ref{prop.diffusion.discrete}, we can find
$\hz \in \cV(\brn,\nu_n)\times M_2$ and 
  $t\le  \tau_n \defeq E_{3c\ga,\ga}(\nu_n)$ 
such that $d(\hz,\bz)\le 3\nu_n$ and
{$d(T^t \hz,\bz) \ge \tau_n^{b}$}. 
%
%
We get $d(\hz,z) \le 5\nu_n$ and $\tau_n = E_{C,\ga}(5\nu_n)$ if we use $C\defeq
3c\ga\cdot 5^\ga$ in the definition~\eqref{eqdefE} of the doubly exponentially large function~$E_{C,\ga}$.
Then, $d(T^t \hz,z) > \tau_n^{b} - 2\nu_n > \demi
\tau_n^{b} \gg E_{C,\ga}(10\nu_n)$ by~\eqref{ineqEElaN},
hence~$T$ is $5\nu_n$-diffusive on~$\cT_0$ for every~$n$ large enough.
%
%
%
%

\subsection{Proof that Proposition~\ref{prop.diffusion.discrete} implies
Theorem~\ref{theorem.manytori.discrete}}
\label{secProofNi}

%
%
We are given $\eps>0$ and, without loss of generality, we can assume
$\eps\le1$.
Let $\ga \defeq \frac{1}{\al-1}$ and {let~$c$ 
be as in Proposition~\ref{prop.diffusion.discrete}}.


Here is a definition that we will use from now on:
we say that a subset~$Y$ of a metric space~$X$ is \textit{$\nu$-dense} if,
for every $z\in X$, there exists $\bz\in Y$ such that $d(z,\bz)\le\nu$.


\noinbf{a}
We first define a fast increasing sequence of integers by
\beglab{eqdefNi}
N_1 \defeq \ceil*{\exp( {4\ka/\eps} )},
\qquad
N_i \defeq N_{i-1} \ceil*{ \exp \big( \exp \big( 
\ti C ( N_{i-1} \ln N_{i-1} )^\ga 
\big) \big) }
\quad\text{for $i\ge2$,}
\edla
{where $\ka\defeq \max\{1,(c\ga)^{-1/\ga}\}$ and $\ti C \defeq \max\{
6c\ga,1/\ga\}$}.
%
%
We also set
\[
\ti\nu_i \defeq \frac{1}{N_i}, \qquad
\nu_i \defeq \frac{\ti\nu_i}{\absD{\ln\ti\nu_i}} 
= \frac{1}{N_i \ln N_i}, \qquad
{\tau_i \defeq E_{3c\ga,\ga}(\nu_i).} 
\]
According to Lemma~\ref{lemineqNi} in the appendix, one has
\beglab{ineqdeuxnuiNi}
2\nu_i N_i = \tfrac{2}{\ln N_i} \le 2^{-2i+1} \eps/\ka < 1.
\edla


\noinbf{b}
We now construct a sequence $(Y_i)_{i\ge1}$ of mutually disjoint
subsets of $\R/\Z$ such that, for each $i\ge1$,
\begin{enumerate}[(i)]
\item
$Y_i$ is a disjoint union of at most~$N_i$ open arcs~$Y\ssij$,
\item
each of these open arcs can be written 
$Y\ssij = (r\ssij-\nu_i,r\ssij+\nu_i) \modZ$, with $r\ssij \in [0,1)$,
\item
$Y_i$ is $\ti\nu_i$-dense in $(\R/\Z) - \bigsqcup\limits_{1\le i'\le i-1} Y_{i'}$,
\item   \label{itemclosedarcs}
$(\R/\Z) - \bigsqcup\limits_{1\le i'\le i} Y_{i'}$ is a disjoint union
of~$N_i$ closed arcs of equal length.
\end{enumerate}

To do so, we start with $r\sspar{1,j} \defeq \frac{j-1}{N_1}$ for
$j=1,\ldots, N_1$ and define the arcs~$Y\sspar{1,j}$ and the set~$Y_1$
by~(i)--(ii)
(the disjointness requirement results from $2\nu_1 N_1 < 1$). 
We then go on by induction and suppose that, for a given $i\ge1$,
(i)--(iv) hold for $i'=1,\ldots,i$.
As a consequence of~(i)--(ii) and~\eqref{ineqdeuxnuiNi}, we have
\beglab{ineqLebYi}
\Leb(Y_{i'}) \le 2\nu_{i'} N_{i'} \le 2^{-i'}\eps/\ka \le 2^{-i'}
\quad \text{for $i' = 1,\ldots,i$.}
\edla
We observe that $M_{i+1} \defeq \frac{N_{i+1}}{N_i}$ is an integer $\ge3$.
Inside each closed arc mentioned in~\eqref{itemclosedarcs}, we can
place $M_{i+1}-1$ disjoint open arcs of length $2\nu_{i+1}$ so that the
complement is made of $M_{i+1}$ closed intervals of equal length.
Indeed, on the one hand 
$2\nu_{i+1}(M_{i+1}-1) < 2\nu_{i+1}N_{i+1}/N_i < 2^{-i-1}/N_i$,
on the other hand, the common length of the closed arcs of~\eqref{itemclosedarcs} is
\[
\mu_i = 
\tfrac{1}{N_i} \Big( 1 - \sum_{i'=1}^i \Leb(Y_{i'}) \Big) > 2^{-i-1}/N_i
\]
by~\eqref{ineqLebYi}.
Labelling all the new open arcs as $Y\sspar{i+1,j}$, where~$j$ runs through a
set of $N_i(M_{i+1}-1) < N_{i+1}$ indices, and calling~$Y_{i+1}$ their
union, we get the desired properties
(note that $Y_{i+1}$ is $\frac{\mu_i}{M_{i+1}}$-dense in each closed arc
mentioned in~\eqref{itemclosedarcs},
and  $\frac{\mu_i}{M_{i+1}} < \frac{1}{N_iM_{i+1}} = \ti\nu_{i+1}$).


\noinbf{c}
Now, for each~$i$ and~$j$, we apply
Proposition~\ref{prop.diffusion.discrete} with $\br = r\ssij$ just
constructed and $\nu=\nu_i$
(assuming~$\eps$ small enough so that the $\nu_i$'s are small enough
to allow us to do so).
We obtain Gevrey functions~$u\ssij$ and~$v\ssij$ supported in
$\cV(r\ssij,\nu_i)$ and $\cV(r\ssij,\nu_i)\times M_2$,
with 
\[
\normD{u\ssij}_{\al,L}+\normD{v\ssij}_{\al,L} \le \xi_i,
\quad\text{where $\xi_i \defeq \ex^{-c\nu_i^{-\ga}} < \ka\nu_i$}
\]
(incorporating~\eqref{ineqexcnuga}), so that $\cV(r\ssij,\nu_i)\times M_2$ is invariant and 
{$\big(3\nu_i,\tau_i,\tau_i^{b}\big)$}-diffusive for
$\Phi^{v\ssij} \circ \Phi^{u\ssij}\circ T_0$.
We set $u\ssij\per(\th_1,r_1) \defeq \sum_{k\in\Z}u\ssij(\th_1,r_1+k)$
and $v\ssij\per(\th_1,r_1,\th_2,r_2) \defeq \sum_{k\in\Z}v\ssij(\th_1,r_1+k,\th_2,r_2)$
so as to get functions which are $1$-periodic in~$r_1$ and have the
same Gevrey norms.

Consider the finite sums $u_i \defeq \sum_j u\ssij\per$ and
$v_i \defeq \sum_j v\ssij\per$ for each $i\ge1$:
the disjointness of the supports implies that 
\beglab{eqdefTidiffus}
\text{$T_i \defeq \Phi^{v_i} \circ \Phi^{u_i}\circ T_0$
is $\big(3\nu_i,\tau_i,\tau_i^{b}\big)$-diffusive on 
$\T \times \ti Y_i\times M_2$,}
\edla
where~$\ti Y_i$ is the lift of~$Y_i$ in~$\R$.
Finally, let $u\defeq \sum_i u_i$ and $v\defeq \sum_i v_i$.
These are well-defined Gevrey functions which are $1$-periodic
in~$r_1$ because
\beglab{inequivi}
\normD{u_i}_{\al,L}+\normD{v_i}_{\al,L} 
\le N_i \xi_i
< N_i \ka \nu_i = \frac{\ka}{\ln N_i} \le 2^{-i-1}\eps
\quad\text{for each $i\ge1$}
\edla
(by~\eqref{ineqdeuxnuiNi}),
hence the series over~$i$ are convergent in Gevrey norm, with
$\normD{u}_{\al,L}+\normD{v}_{\al,L} < \eps$.


\noinbf{d}
We claim that $T \defeq \Phi^{v} \circ \Phi^{u}\circ T_0$ satisfies Theorem
\ref{theorem.manytori.discrete} with 
\[
X_\eps \defeq 
\Big( \R - \bigsqcup\limits_{i\ge1} \ti Y_i \Big) \cap (0,1).
\]
%
%
%
To prove this claim, firstly we note that
$\Leb\big([0,1]-X_\eps\big) = \sum_i \Leb(Y_i) \le \eps$
by~\eqref{ineqLebYi}, and $X_\eps$ is closed because~$\ti Y_1$
contains~$0$ and~$1$.

%
Secondly the functions~$u$ and~$v$ vanish with all their partial derivatives
for $r_1\in X_\eps$ because the functions~$u\ssij\per$ and~$v\ssij\per$ and
all their partial derivatives do.

%
Lastly, by virtue of the previous point, for each $(r_1,r_2) \in
(X_\eps+\Z)\times\R$ the torus $\cT_{(r_1,r_2)}$ is invariant for~$T$,
thus it only remains for us to show that 
the union $\T\times(X_\eps+\Z)\times M_2$ of these tori
is \diff\ with exponent $\ga=\frac{1}{\al-1}$.
In view of the definition of~$X_\eps$, it is sufficient to show that,
for~$i$ large enough,
\beglab{claimXidiffusst}
\text{$Z_i \defeq M_1\times M_2 - \bigsqcup\limits_{1\le i'\le i-1} \T\times\ti Y_{i'}
  \times M_2$ \  is $2\ti\nu_i$-\diff\ for~$T$ with exponent~$\ga$.}
\edla
We will prove this with the constant $C\defeq 3^{\ga+1}c\ga$ in the
definitions~\eqref{eqdefE} and~\eqref{eqdefEstar} of the 
functions $E=E_{C,\ga}$ and $E^*=E_{C,\ga}^*$,
as a consequence of the fact that, for~$i$ large enough,
\beglab{claimXidiffus}
\text{$Z_i$ \ is {$(2\ti\nu_i,\tau_i,\dem \tau_i^{b})$}-diffusive for~$T$.}
\edla
Indeed, $\tau_i = E_{3c\ga,\ga}(\nu_i) = E(3\nu_i) = E\big( \frac{3\ti\nu_i}{\absD{\ln \ti\nu_i}} \big)
\ll E\big( \frac{2\ti\nu_i}{\absD{\ln \ti\nu_i}-\ln 2} \big) = E^*(2\ti\nu_i)$
by~\eqref{ineqEElaN}, and $\dem \tau_i^{b} \gg
E\big( \frac{4\ti\nu_i}{\absD{\ln \ti\nu_i}-\ln 4} \big) = E^*(4\ti\nu_i)$
still by~\eqref{ineqEElaN},
hence~\eqref{claimXidiffus} implies~\eqref{claimXidiffusst}.


\noinbf{e}
To prove~\eqref{claimXidiffus}, let $z\in Z_i$. We set $W_i \defeq \T\times \ti Y_i\times M_2$.
By property~(iii) of the construction of~$Y_i$, we
can find $\bz \in W_i$ such that $d(\bz,z) \le \ti\nu_i$,
and~\eqref{eqdefTidiffus} then yields $\hz \in W_i$
and $t\le \tau_i$ such that $d(\hz,\bz) \le 3 \nu_i$
and $d(T_i^t \hz, \bz) \ge \tau_i^{b}$.
We have $\absD{\ln\ti\nu_i} = \ln N_i > 3$ by virtue
of~\eqref{ineqdeuxnuiNi}, hence $3 \nu_i < \ti\nu_i$ and
$d(\hz,z) \le 2\ti\nu_i$.
Since $u_1,\ldots,u_{i-1},v_1,\ldots,v_{i-1}$ and all their
partial derivatives vanish on~$W_i$, the restriction of~$T$ to~$W_i$
coincides with that of 
\beglab{deftiTi}
\ti T_i \defeq \Phi^{v_i+g_i} \circ \Phi^{u_i + f_i} \circ T_0,
\quad \text{where}\quad
f_i \defeq \sum_{i'>i} u_{i'}, \quad
g_i \defeq \sum_{i'>i} v_{i'}.
\edla
Comparing the definition~$T_i$ in~\eqref{eqdefTidiffus}
and that of~$\ti T_i$ in~\eqref{deftiTi}, we observe
that the first~$t$ points on the $T_i$-orbit of~$\hz$ are close to the
first~$t$ points on its $T$-orbit (which is the same as its
$\ti T_i$-orbit). More precisely, \eqref{inequivi} yields
\[
\normD{u_i}_{\al,L} + \normD{f_i}_{\al,L} +
\normD{v_i}_{\al,L} + \normD{g_i}_{\al,L} \le 
\sum_{i'\ge i} N_{i'}\xi_{i'} \le 2^{-i}\eps,
\]
hence we can use Corollary~\ref{lemCompFlows} from the appendix for~$i$ large enough, 
obtaining 
\[
d\big(T_i^t(\hz),\ti T_i^t(\hz) \big) \le 3^{\tau_i} C_1 \big(
\normD{f_i}_{\al,L} + \normD{g_i}_{\al,L} \big)
\]
from~\eqref{ineqDistortEstim}, where $C_1 = C_1(\al,L)$, but
\[
\normD{f_i}_{\al,L} + \normD{g_i}_{\al,L} \le
\sum_{i'\ge i+1} N_{i'}\xi_{i'} \le 2 N_{i+1}\xi_{i+1}
\le 2 \cdot 3^{-\tau_i}
\]
by the first part of~\eqref{inequivi} and \eqref{ineqgeomNixi}--\eqref{ineqexpNixi},
whence
$d(T_i^t\hz, \ti T_i^t\hz) \le 2C_1 \le \dem \tau_i^{b}- d(\bz,z)$ 
and thus $d(T^t\hz, z) = d(\ti T_i^t\hz, z) \ge \dem \tau_i^{b}$
for~$i$ large enough,
and we conclude that~$z$ is {$(2\ti\nu_i,\tau_i,\dem \tau_i^{b})$-diffusive for~$T$.}
The proof of~\eqref{claimXidiffus} is thus complete.

\section{Isolated periodic points for twist maps of the annulus}
\label{secIsolPerPts}


\begin{definition}    
  Given a real $\sig>0$ and a discrete dynamical system in a metric
  space, we say that a periodic point is \textit{$\sig$-isolated} if it lies at
  a distance $\ge \sig$ of the rest of its orbit.
\end{definition}    

The goal of this section is to prove the following statement, which
will be instrumental in the proof of
Proposition~\ref{prop.diffusion.discrete}, where a perturbation
of~$F_0$ is obtained that has an isolated periodic point in~$M_1$.


\begin{prop} \label{prop.isolation} 
Let $\ga \defeq\frac{1}{\al-1}$.
%
%
There exists $c=c(\al,L)$ with the following property:
if we are given real numbers~$\nu$ and~$\sig$ with
%
%
$\nu>0$ small enough and
$0 < \sig \le \exp(- 2 c \nu^{-\ga})$, 
then, for any integer $\ell \ge 6 / \nu$ and
for any $\br \in \R$, 
there exists $u\in G^{\al,L}(M_1)$ such that
\begin{enumerate}[(1)]
\item $u\equiv 0$ on $\cV(\br,\nu)^c$,
\item {$\normD{u}_{\al,L} \le \demi \exp(- c \nu^{-\ga})$,}
\item $F \defeq \Phi^u \circ F_0\col M_1 \righttoleftarrow$ has a {$\sig$-isolated}
  periodic point $z_1\in \cV(\br,3\nu/4)$ of period ${q \in [\ell,3\ell/\nu]}$,
%
%
\item {the set $\{ F^s(z_1) \mid s \in \N,\; {2}/{\nu} \le
    s \le {6}/{\nu} \}$ is $2\nu$-dense in $\cV(\br,\nu)$.}
\end{enumerate} 
\end{prop}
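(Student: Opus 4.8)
The plan is to work on the first factor $M_1 = \T\times\R$ and to produce the perturbation $u$ in two stages. First I would treat the model problem on a single circle: perturbing a rigid rotation $\theta\mapsto\theta+P/Q$ of $\T$ into a circle diffeomorphism having one $\sigma$-isolated periodic point of prescribed (large) period $q$, where the isolation $\sigma$ is exponentially small in $Q$ but otherwise independent of $q$. Concretely, one fixes a rational $P/Q$ with $1/Q$ of the order of $\nu$ (so that the periodic circle $r_1 = P/Q - \omega_1$ of the linear twist $F_0$ sits inside $\cV(\br,3\nu/4)$; here one uses the freedom in $\ell$ and the hypothesis $\ell\ge 6/\nu$ to arrange $q$ in the stated window $[\ell,3\ell/\nu]$), and then adds a small Gevrey bump supported near one point of the period-$Q$ orbit. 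The bump is tuned so that the return map to a tiny interval around that point becomes a rotation by $1/q$ (after $Q$ iterates), creating a single periodic orbit of period $qQ$ whose points fall into $Q$ clusters, the cluster containing our marked point being separated from the others by a gap $\sigma$ controlled by how localized the bump is — and the Gevrey category forces $\sigma$ to be no larger than $\exp(-\const\,Q^\gamma)$, i.e. $\exp(-\const\,\nu^{-\gamma})$. This is the content announced in the paper's Section~5.1 and is where the constant $c=c(\al,L)$ is born, via the Gevrey estimates of the Appendix.

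Second, I would use Herman's embedding trick (Section~5.2 in the paper's plan) to realize this circle dynamics inside a perturbation of the annulus twist $F_0$ supported in $\cV(\br,\nu)$: near the rational circle $r_1 = P/Q-\omega_1$ of $F_0$, one conjugates/interpolates so that $F=\Phi^u\circ F_0$ restricted to (a neighborhood of) that circle reproduces the circle diffeomorphism built in stage one, while $u$ is cut off to vanish identically outside $\cV(\br,\nu)$ — this gives conclusion~(1). The $\sigma$-isolated periodic point $z_1$ of $F$ then lies on that circle, hence in $\cV(\br,3\nu/4)$, with period $q$ (or $qQ$, after rescaling the counting so that the stated window $[\ell,3\ell/\nu]$ holds); this is conclusion~(3). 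Conclusion~(2), $\normD{u}_{\al,L}\le\demi\exp(-c\nu^{-\ga})$, is obtained by bookkeeping the Gevrey norm of the bump and of the cut-off through the composition/flow estimates of Appendix~\ref{AppSubsecGev}, shrinking $c$ if necessary; the hypothesis $\sigma\le\exp(-2c\nu^{-\ga})$ leaves the needed margin between the isolation one can guarantee and the size of the perturbation one must pay.

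For conclusion~(4) — that $\{F^s(z_1):2/\nu\le s\le 6/\nu\}$ is $2\nu$-dense in $\cV(\br,\nu)$ — I would use that on the invariant circle the dynamics of $F$ is (semi-)conjugate to a rotation with rotation number close to $P/Q\approx$ an irrational-like quantity of size $\asymp\nu$, so that $O(1/\nu)$ iterates already equidistribute the orbit of $z_1$ along the circle up to spacing $O(\nu)$; combined with the fact that the circle itself is within $3\nu/4$ of the center in the $r_1$-direction (and $\T$ is covered in the $\theta_1$-direction), this yields $2\nu$-density of the finite orbit piece in the full neighborhood $\cV(\br,\nu)$ for $\nu$ small. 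The lower bound $s\ge 2/\nu$ is harmless since it only discards a bounded-proportion initial segment.

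The main obstacle is stage one: simultaneously (a) getting a genuinely $\sigma$-isolated periodic point rather than merely a periodic orbit, (b) keeping the period $q$ a free large parameter while the isolation $\sigma$ depends only on $Q\asymp 1/\nu$ (not on $q$), and (c) respecting the Gevrey-$(\al,L)$ constraint, which is exactly what pins $\sigma$ at the scale $\exp(-\const\,\nu^{-\gamma})$ with $\gamma=\frac1{\al-1}$ and no better. Arranging the localized bump so that the return map is exactly a rotation by $1/q$ on a small interval, with all the $G^{\al,L}$ norms under control and the support confined to a region of $r_1$-width $<\nu/4$ around one point of the period-$Q$ orbit, is the delicate quantitative heart of the argument; the embedding step and the density/isolation bookkeeping are comparatively routine once the Appendix estimates are in hand.
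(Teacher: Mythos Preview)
Your two-stage architecture (circle construction, then Herman embedding with cutoff) matches the paper exactly, and your treatment of stages two and three is essentially right. The gap is in stage one: the mechanism you describe for the circle map is not the one the paper uses, and as you describe it, it does not obviously deliver the crucial decoupling of the isolation~$\sigma$ from the period~$q$.

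You propose a bump supported near \emph{one} point of the period-$Q$ orbit, tuned so the $Q$-th return map is a rotation by $1/q$ on a small interval. But if the return map equals rigid rotation by $1/q$ on an interval, every point of that interval is periodic of period~$q$, and you get no isolated periodic point at all; and if instead you tune the bump so the $q$-th return map has a single fixed point, the isolation you obtain is governed by the bump's amplitude or support, both of which are now tied to~$1/q$. Either way you lose the key feature of Proposition~\ref{prop.isolation}, namely that $\sigma$ depends only on $\nu\sim 1/Q$ while~$q$ is a free large parameter. The paper's construction (its Proposition in Section~\ref{secCirclDiffeoIsol}) avoids this by using \emph{two} independent parameters acting on a $\tfrac1Q$-\emph{periodic} function~$\Delta_Q$ (zero at $0$, identically~$1$ on $[\tfrac1{4Q},\tfrac3{4Q}]$): the amplitude~$\sigma$ in $f_{\sigma,\delta}(\theta)=\theta+\tfrac{P}{Q}+\delta+\sigma\Delta_Q(\theta)$ controls the isolation, while a separate tiny drift~$\delta\in(0,\tfrac{1}{\ell Q}]$ is tuned by an intermediate-value argument so that the orbit of $\tfrac1{2Q}$ is periodic of period $q\in[\ell,\ell Q]$. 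In the rescaled variable $X=Q\theta$ the orbit spends exactly~$\ell$ steps per unit interval, always staying at distance $\ge\sigma Q$ from the integers; translating back gives $\sigma$-isolation of $\tfrac1{2Q}+\Z$ from the rest of its orbit, regardless of how large~$\ell$ (hence~$q$) is. The exponential smallness of~$\sigma$ is then forced not by the isolation mechanism itself but by the norm budget: $\|\Delta_Q\|_{\alpha,L'}\lesssim\exp(\tilde c\,Q^\gamma)$, so to keep $\|\sigma\Delta_Q\|_{\alpha,L'}$ small enough for the embedding estimates one needs $\sigma\le\exp(-2c\nu^{-\gamma})$.

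A smaller point: your argument for conclusion~(4) invokes equidistribution under an ``irrational-like'' rotation, but the rotation number here is the rational $p/q$. The paper's density argument is purely combinatorial: for $Q\le s\le 2Q-1$ one has $x_s\in\big(x_0+\tfrac{sP}{Q},\,x_0+\tfrac{sP}{Q}+\tfrac1Q\big)$, and since $P\wedge Q=1$ these $Q$ arcs of length~$\tfrac1Q$ cover~$\T$; this gives $\tfrac1Q$-density in~$\T$, hence $2\nu$-density of the corresponding annulus points in $\cV(\bar r,\nu)$ once the graph is embedded.
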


The interest of the statement is that, although~$\sig$ is required to
be exponentially small, it can be kept independent of~$\ell$, even
if we choose~$\ell$, and thus~$q$, doubly exponentially large.

The proof will start with the construction of a circle map with an
isolated periodic point.


\subsection{Circle diffeomorphisms with isolated periodic points }  
\label{secCirclDiffeoIsol}


We start by constructing a circle diffeomorphism with an isolated periodic point.
For any $Q\in\N^*$ we denote by $\De_Q$ a function in $C^\infty(\T)$ satisfying
\beglab{eqdefDeQ}
\text{$\De_Q$ is $\tfrac{1}{Q}$-periodic,} \quad 
0 \le \De_Q \le 1, \quad 
\De_Q(0) = 0, \quad
\text{$\De_Q \equiv 1$ on $\big[ \tfrac{1}{4Q}, \tfrac{3}{4Q} \big].$}
\edla
%
%
Since $\al>1$, every space $G^{\al,L'}(\T)$ contains such a function, and one can choose it so that
\beglab{ineq:normDeQ}
\normD{\De_Q}_{\al,L'} \le L'^\al\exp(\ti c \, Q^\ga)
\edla
with $\ti c=\ti c(\al,L')$ independent of~$Q$
according to Lemma~\ref{lembump} in the appendix
(take \eg $\De_Q(\th) \defeq \sum_{j=1}^Q \eta_{2Q}\big(
\frac{2j-1}{2Q} + \th
\big)$
and $\ti c \defeq L'^{-\al} + \frac{1}{\ga} + 2^\ga c_1(\al,L')$
with the notation of Lemma~\ref{lembump}, using the inequalities
$L'^{-\al}\le \ex^{L'^{-\al}}$ and $1\le Q \le \ex^{\frac{1}{\ga}Q^\ga}$).


\begin{prop}   \label{propA}
Let $Q\in\N^*$ and $P\in\Z$ be coprime and let $\sig \in \big(0, \frac{1}{\max(-\De_Q')}\big)$.
Then, for every $\ell \in\N^*$, 
there exist an integer $q = q_\ell(Q,P)$ 
and a real
$\de = \de_\ell(\sig,Q,P)$
such that
\begin{itemize}
\item 
$\ell \le q \le \ell Q$ and
$0 < \de \le \frac{1}{\ell Q}$,
\item 
the point $\frac{1}{2Q}+\Z$ is periodic of
period~$q$ and $\sig$-isolated for the circle map
\beglab{eqdeffsigde}
\th \in \T \mapsto
f_{\sig,\de}(\th) \defeq \th + \frac{P}{Q} + \de + \sig \De_Q(\th)
\modZ.
\edla
\end{itemize}
If moreover $\ell\ge 2Q$, then {the set $\{ f_{\sig,\de}^s(\frac{1}{2Q}+\Z) \mid s \in \N,\; Q\le
    s \le 2Q-1 \}$ is $\frac{1}{Q}$-dense in~$\T$.}
\end{prop}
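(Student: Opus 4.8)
The plan is to analyze the circle map $f_{\sig,\de}$ directly, using that its rotation number is controlled by the two parameters $P/Q$ and $\de$, while the bump $\sig\De_Q$ only produces a localized slowdown. First I would observe that since $\De_Q$ is $\tfrac1Q$-periodic and vanishes at $0$ (hence at every point of $\tfrac1Q\Z$), the $Q$-fold iterate $f_{\sig,\de}^{\,Q}$ sends $\th\mapsto \th + P + Q\de + \sig\sum_{j=0}^{Q-1}\De_Q(\th+jP/Q+\dots)\pmod 1$; more to the point, the orbit of the special point $x_0\defeq\tfrac1{2Q}+\Z$ is what we must pin down. The key arithmetic fact is that $\tfrac1{2Q}$ lies at the center of an interval $[\tfrac1{4Q},\tfrac3{4Q}]$ on which $\De_Q\equiv1$; I want to choose $\de$ so small that the forward orbit of $x_0$ under $f_{\sig,\de}$ never hits the ``active'' region $[\tfrac1{4Q},\tfrac3{4Q}]$ except possibly at $x_0$ itself during one full loop, so that between consecutive visits near $x_0$ the map behaves like the rigid rotation by $\tfrac PQ+\de+\sig$ (a constant). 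Then the period $q$ is exactly the combinatorial return time of the rotation number $\tfrac PQ + \de + \sig$ to a neighborhood of $x_0$, which for $\de$ of size $\asymp \tfrac1{\ell Q}$ can be made to be any integer in $[\ell,\ell Q]$.

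Concretely, I would set things up as follows. Because $\gcd(P,Q)=1$, the points $\{\,kP/Q + \Z : k=0,\dots,Q-1\,\}$ are exactly the $Q$ equally spaced points $\{\,j/Q+\Z\,\}$, and the special point $x_0=\tfrac1{2Q}+\Z$ sits strictly between two consecutive ones; its $\tfrac1Q$-neighborhood structure is rigid. Now consider the auxiliary rigid rotation $R$ by angle $\tfrac PQ + \sig$: after $Q$ steps this is translation by $Q\sig$, so if $Q\sig<\tfrac1{2Q}$ (which is implied by $\sig< 1/\max(-\De_Q')$ once one notes $\max(-\De_Q')\gtrsim Q$, or can simply be added as a harmless strengthening absorbed into the choice of $\sig$'s upper bound) the orbit of $x_0$ under $R$ drifts slowly. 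Choosing $\de$ to be a suitable rational of the form making $\tfrac PQ + \de + \sig$ have denominator $q$ with $\ell\le q\le \ell Q$ — this is where the freedom ``$0<\de\le\tfrac1{\ell Q}$'' and ``$\ell\le q\le\ell Q$'' comes from, by an elementary continued-fraction / pigeonhole argument on the interval $(\tfrac PQ+\sig,\ \tfrac PQ+\sig+\tfrac1{\ell Q}]$ — makes $x_0$ periodic of period exactly $q$ for $R$, hence for $f_{\sig,\de}$ provided the orbit avoids the support issue. The $\sig$-isolation then follows because along the orbit the closest return to $x_0$ before time $q$ is governed by the three-distance (Steinhaus) theorem for the rotation by $\tfrac PQ+\de+\sig$ at scale determined by $Q$: consecutive orbit points that are not $x_0$ stay at distance bounded below by roughly $\tfrac1{2Q}$ minus a $\de$-error, which exceeds $\sig$ since $\sig<1/\max(-\De_Q')\lesssim 1/Q$.

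For the final density assertion when $\ell\ge 2Q$: once $q\ge \ell\ge 2Q$, the points $f_{\sig,\de}^s(x_0)$ for $s=Q,\dots,2Q-1$ are $Q$ distinct points of the orbit, and modulo the small $\de$- and $\sig$-perturbations they are $x_0$ shifted by $s(\tfrac PQ+\de+\sig)$; since $\{sP/Q+\Z: s=Q,\dots,2Q-1\} = \{sP/Q+\Z : s=0,\dots,Q-1\}$ is exactly the $\tfrac1Q$-net $\{j/Q+\Z\}$ and the cumulative error $s(\de+\sig)\le 2Q(\tfrac1{\ell Q}+\sig)$ is $\ll \tfrac1Q$, these $Q$ points form a $\tfrac1Q$-dense subset of $\T$.

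The main obstacle I expect is bookkeeping the interaction between the bump and the orbit: one must verify that inserting $\sig\De_Q$ does not destroy the exact periodicity or the precise isolation constant, i.e. that the orbit of $x_0$ genuinely meets the plateau $[\tfrac1{4Q},\tfrac3{4Q}]$ only at $x_0$ and only once per period, and that on the remaining (large) part of the circle $f_{\sig,\de}$ is close enough to the rigid rotation for the three-distance estimate to survive with room to spare. Getting the quantitative thresholds ($\sig$ vs. $1/\max(-\De_Q')$, $\de$ vs. $1/(\ell Q)$, the need $\ell\ge 6/\nu$-type slack) to line up is the delicate part; everything else is elementary rotation-number combinatorics.
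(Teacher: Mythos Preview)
There is a genuine gap in your plan, rooted in a misreading of the bump~$\De_Q$. You write that the orbit of $x_0=\tfrac1{2Q}+\Z$ should ``never hit the `active' region $[\tfrac1{4Q},\tfrac3{4Q}]$ except possibly at $x_0$ itself'', and that consequently ``between consecutive visits near $x_0$ the map behaves like the rigid rotation by $\tfrac PQ+\de+\sig$''. These two assertions are mutually inconsistent and each one is false. First, $\De_Q$ is $\tfrac1Q$-periodic, so the plateau $\{\De_Q=1\}$ is not a single arc near~$x_0$ but the union of~$Q$ arcs $[\tfrac{j}{Q}+\tfrac1{4Q},\tfrac{j}{Q}+\tfrac3{4Q}]$ covering half the circle; and off this set $\De_Q$ is not zero but takes unknown values in $[0,1]$ (it vanishes only at the isolated points $j/Q+\Z$). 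Hence $f_{\sig,\de}$ is \emph{not} a rigid rotation on any arc, and the displacement $\tfrac PQ+\de+\sig\De_Q(\th)$ varies from point to point along the orbit. Second, and this is the fatal consequence, choosing $\de$ so that $\tfrac PQ+\de+\sig$ is rational with a prescribed denominator does not make $x_0$ periodic for $f_{\sig,\de}$, since that number is not the rotation number of $f_{\sig,\de}$ and in any case $\sig$ is an arbitrary real. Your pigeonhole/continued-fraction step and the three-distance argument are both built on this incorrect rigid-rotation picture, so the whole isolation and periodicity argument collapses.

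The paper's proof avoids this entirely by exploiting the $\tfrac1Q$-periodicity of~$\De_Q$ rather than any rigidity. Rescaling $X=Q\th$ and subtracting the integer translation~$P$, one obtains an auxiliary lift $\ti G_{\sig,\de}(X)=X+\de Q+\sig Q\,\De_Q(X/Q)$ which is simply an increasing diffeomorphism of~$\R$ commuting with $X\mapsto X+1$; the point $X_0=\tfrac12$ is periodic for the original map precisely when $\ti G_{\sig,\de}^{\,\ell}(X_0)=X_0+1$ for some~$\ell$. The value of~$\de$ is then fixed not by arithmetic but by the intermediate value theorem: $\de\mapsto\ti G_{\sig,\de}^{\,\ell}(X_0)$ is continuous and increasing, with $\ti G_{\sig,0}^{\,\ell}(X_0)<X_0+1\le\ti G_{\sig,1/(\ell Q)}^{\,\ell}(X_0)$, so a unique $\de\in(0,\tfrac1{\ell Q}]$ works. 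The resulting periodic orbit has combinatorial type $p/q$ with $\tfrac pq=\tfrac{\ell P+1}{\ell Q}$ in lowest terms, whence $\ell\le q\le\ell Q$. The $\sig$-isolation is obtained by a direct estimate: the intermediate iterates $\ti X_1,\dots,\ti X_{\ell-1}$ lie in $(X_0+\sig Q,\,X_0+1-\sig Q)$ (using only that $\De_Q(X_0/Q)=1$ for the first step and a simple dichotomy for the last), which after undoing the rescaling and reinserting the $P$-translation traps each non-trivial orbit point in an open interval of the form $(\tfrac{M}{Q}+\sig,\tfrac{M+1}{Q}-\sig)$ disjoint from $x_0+\Z$. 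No three-distance theorem is needed, and---crucially---no information about $\De_Q$ beyond $0\le\De_Q\le1$, $\De_Q(x_0)=1$, and its $\tfrac1Q$-periodicity is used.
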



\begin{proof} 
\noinbf{a}
We define $q=q_\ell(Q,P)$ by writing
\[
\frac{\ell P+1}{\ell Q} = \frac{p}{q}
\quad
\text{with $q\in\N^*$ and $p\in\Z$ coprime.}
\]
Since $1$ is the only common divisor
of~$\ell$ and $\ell P+1$, we must have
\beglab{eq:divisellPpun}
\ell P+1 = p D, \quad Q = q' D, \quad q = \ell q'
\edla
with $D\in\N^*$ and $p\wedge q'=1$,
hence $\ell \le q \le \ell Q$.

The condition $0 < \sig < \frac{1}{\max(-\De_Q')}$ ensures that
$1+\sig\De_Q'$ stays positive hence, for every $\de\in\R$, the formula
\[
F_{\sig,\de}(x) \defeq x + \frac{P}{Q} + \de + \sig \De_Q(x)
\]
defines an increasing diffeomorphism of~$\R$ such that
$F_{\sig,\de}(x+1) = F_{\sig,\de}(x) +1$.
Formula~\eqref{eqdeffsigde} then defines a diffeomorphism~$f_{\sig,\de}$ of~$\T$, a lift of which
is~$F_{\sig,\de}$.
%
%
We will tune~$\de$ so as to get the rotation
number of~$f_{\sig,\de}$ equal to $p/q$.

%
\noinbf{b}
To study the dynamics of~$F_{\sig,\de}$ and particularly the orbit of
$x_0\defeq \frac{1}{2Q}$, we perform the change of
variable $X=Qx$ and set $X_0\defeq \demi$ and
\begin{align*}
G_{\sig,\de}(X) &\defeq Q F_{\sig,\de}(X/Q) 
= X+ P + \de Q + \sig Q \De_Q(X/Q),\\[1ex]
\ti G_{\sig,\de}(X) &\defeq X + \de Q + \sig Q \De_Q(X/Q).
\end{align*}
Note that also~$\ti G_{\sig,\de}$ is an increasing diffeomorphism of~$\R$ for
each $\de\in\R$. 
For every $\ell\in\N^*$ we have
\[
\ti G_{\sig,0}^\ell(X_0) < X_0 + 1
= \ti G_{0,\frac{1}{\ell Q}}^\ell(X_0)
\le \ti G_{\sig,\frac{1}{\ell Q}}^\ell(X_0)
\]
(the left inequality holds because $X_0<1$ and~$1$ is a fixed
point of~$\ti G_{\sig,0}$;
the right inequality holds because $\sig Q\De_Q(x)\ge0$ for all~$x$).
Therefore, since $\de \mapsto \ti G_{\sig,\de}^\ell(X_0)$ is continuous and increasing, we can
define $\de\defeq\de_\ell(\sig,Q,P)$ as the unique solution of the
equation 
\[
\ti G_{\sig,\de}^\ell(X_0) = X_0 + 1
\]
and we know that $0 < \de_\ell(\sig,Q,P) \le \frac{1}{\ell Q}$.

%
\noinbf{c}
We now fix~$\de$ to be this value~$\de_\ell(\sig,Q,P)$ and check that it
satifies the desired properties.
First, notice that
\beglab{ineq:sigquartQ}
4\sig Q <  1
\edla
(because $\De_Q(3/4Q)=1$ and $\De_Q(1)=0$, hence the mean value theorem implies
$
{\max(-\De_Q')} 
\ge {4Q}$).
Let us denote the full orbits of $X_0=\demi$ under~$G_{\sig,\de}$ and~$\ti G_{\sig,\de}$
by 
\[
X_j \defeq G_{\sig,\de}^ j(X_0), \quad \ti X_j \defeq \ti G_{\sig,\de}^ j(X_0), \qquad j \in \Z.
\]
We have $X_0 = \ti X_0 < \ti X_1 < \cdots < \ti X_{\ell-1} 
< \ti X_\ell = X_0 + 1$. In fact,
\beglab{ineq:tiXjzeroell}
X_0 + \sig Q < \ti X_1 < \cdots < \ti X_{\ell-1} < X_0+1-\sig Q.
\edla
Indeed, $\De_Q(X_0/Q) = 1$ hence $\ti X_1 = X_0 + \sig Q + \de Q$, and
either
$\ti X_{\ell-1} \ge X_0+1-\quart$, in which case
$X_0+1 = \ti G_{\sig,\de}(\ti X_{\ell-1}) = \ti X_{\ell-1} + \sig Q +
\de Q > \ti X_{\ell-1} + \sig Q$,
or $\ti X_{\ell-1} < X_0+1-\quart < X_0+ 1 -\sig Q$ by~\eqref{ineq:sigquartQ}.

Since $\De_Q$ is $\frac{1}{Q}$-periodic, we have $\ti G_{\sig,\de}(X+1) = \ti G_{\sig,\de}(X)+1$
and the pattern~\eqref{ineq:tiXjzeroell} repeats $1$-periodically:
for every $m\in\Z$, $\ti X_{\ell m} = X_0 + m$ and
\[
X_0 + m + \sig Q < \ti X_{\ell m + s} < X_0 + m + 1 - \sig Q
\quad \text{for $s = 1, \ldots, \ell-1$.}
\]
Since $\De_Q$ is $\frac{1}{Q}$-periodic, we have $G_{\sig,\de}^j(X) =
\ti G_{\sig,\de}^j(X)+jP$ for every $j\in\Z$, hence $X_j = \ti X_j +
jP$ and,
for every $m\in\Z$, 
\begin{gather}
\label{eq:XellmmppD}
X_{\ell m} = X_0 + m(\ell P+1) = X_0 + m p D \\[1ex]
\label{eq:XellmrM}
X_0 + M_{m,s} + \sig Q < X_{\ell m + s} < X_0 + M_{m,s} + 1 - \sig Q
\quad \text{for $s = 1, \ldots, \ell-1$,}
\end{gather}
with $M_{m,s} \defeq m(\ell P+1) + sP$.

%
\noinbf{d}
Going back to the variable $x=X/Q$, 
we see that the orbit $(x_j)_{j\in\Z}$ of~$x_0$ under~$F_{\sig,\de}$ satisfies
\beglab{eqxlmlmr}
x_{\ell m}  = x_0 + \frac{m p}{q'}
\quad\text{and}\quad
x_0 + \frac{M_{m,s}}{Q} + \sig < x_{\ell m + s} < x_0 + \frac{M_{m,s} + 1}{Q} - \sig
\edla
for $m\in\Z$ and $1\le s<\ell$
(thanks to~\eqref{eq:divisellPpun} and
\eqref{eq:XellmmppD}--\eqref{eq:XellmrM}).
In particular, $x_q = x_0 + p$, hence it induces a $q$-periodic orbit
of type $p/q$ for~$f_{\sig,\de}$.
The $\sig$-isolation property amounts to 
\[
\Dist(x_j,x_0 + \Z) \ge \sig \quad \text{for $1\le j< q = \ell q'$.}
\]
This holds because either $j=\ell m$ with $1\le m<q'$
and the first part of~\eqref{eqxlmlmr} entails
$x_{\ell m}-x_0 \in \Q-\Z$ with
$\Dist(x_{\ell m}-x_0,\Z) \ge \frac{1}{q'} \ge \frac{1}{Q} > 4\sig$ by~\eqref{ineq:sigquartQ},
or $j=\ell m+s$ with $1\le s<\ell$ and the second part
of~\eqref{eqxlmlmr} yields
$x_j-x_0 \in (\frac{M_{m,s}}{Q}+\sig, \frac{M_{m,s} + 1}{Q}-\sig)$,
but $(\frac{M_{m,s}}{Q}, \frac{M_{m,s} + 1}{Q}) \cap \Z = \emptyset$,
hence $\Dist(x_j-x_0,\Z) > \sig$.

%
\noinbf{e}
We now suppose $\ell \ge 2Q$ and prove the $\frac{1}{Q}$-density statement.

If $P=0$, then $Q=1$ (because of the assumption $P\wedge Q=1$) and
there is nothing to prove. We thus suppose $P\neq0$.
Using the second part of~\eqref{eqxlmlmr} with
$1 \le s \le 2Q-1< \ell$ and $m=0$,
since $M_{0,s}= sP$, we get 
\beglab{encadrxs}
x_0 + \frac{sP}{Q} < x_s < x_0 + \frac{sP}{Q} + \frac{1}{Q}
\quad \text{for $s = Q, \ldots, 2Q-1$.}
\edla
Since $P\wedge Q=1$, the $Q$ arcs $\big[x_0 + \frac{sP}{Q}, x_0 +
\frac{sP}{Q} + \frac{1}{Q}\big) \modZ$ are mutually disjoint and
cover~$\T$;
each of them has length~$\frac{1}{Q}$ and, according to~\eqref{encadrxs}, contains a point of
$\{ x_s+\Z \mid s = Q, \ldots, 2Q-1 \} = \{
f_{\sig,\de}^s(x_0+\Z) \mid s = Q, \ldots, 2Q-1 \}$.
This set is thus $\frac{1}{Q}$-dense in~$\T$.
\end{proof}



\subsection{Herman imbedding trick of circle diffeomorphisms into twist maps} 


We will have to imbed the circle dynamics of Proposition~\ref{propA}
into the annulus {\it via} a perturbation of the twist map~$F_0$. For
this we will use the celebrated technique introduced by Herman in
\cite{Hasterisque} to imbed circle dynamics as restricted dynamics on
an invariant graph by a twist map.

Recall that trivial examples of symplectic maps of the annulus are
given by 
\beglab{eq:trivex}
\Phi^h(\th,r)=\big(\th+h'(r)+\Z,r\big), \qquad
\Phi^w(\th,r)=\big(\th,r-w'(\th)\big),
\edla
where the function $h=h(r)$ does not depend on the angle $\th\in\T$,
and the function $w=w(\th)$ does not depend on the action~$r$.
In particular,
\beglab{CasePhihF}
h(r) \equiv \om_1 r + \dem r^2 
\IMP
\Phi^h = F_0.
\edla


\begin{prop}   \label{propB}
Suppose that we are given a circle diffeomorphism of the form
\[
\th \in \T \mapsto f(\th) = \th + h'\big(\hr + \eps(\th)\big) \modZ,
\]
where $h=h(r)$ and $\eps=\eps(\th)$ are smooth functions and
$\hr \in \R$.
Then the equation
\beglab{eqdefwp}
-w' = \eps - \eps\circ f\ii,
\edla
determines a smooth function $w=w(\th)$ up to an additive constant,
and the annulus map
\begla
\Phi^w \circ \Phi^h \col (\th,r) \mapsto \big( \th + h'(r)+\Z, r - w'(\th+h'(r)) \big)
\edla
leaves invariant the graph $\big\{ \big( \th, \hr + \eps(\th) \big)
\mid \th \in \T\big\}$, with induced dynamics $\th \mapsto f(\th)$ on
it.
\end{prop}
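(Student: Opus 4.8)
The plan is to settle the solvability of~\eqref{eqdefwp} first, and then to verify by a short computation that $\Phi^w\circ\Phi^h$ has the announced effect on the graph.

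\emph{Solvability of~\eqref{eqdefwp}.} Since~$f$ is a diffeomorphism of~$\T$, its inverse $f\ii$ is again smooth, so $\eps - \eps\circ f\ii$ is a smooth function on~$\T$; and a smooth function on~$\T$ is the derivative of a smooth function on~$\T$, unique up to an additive constant, if and only if its mean vanishes. So it is enough to check $\int_\T(\eps - \eps\circ f\ii)=0$. Let $F$ be a lift of~$f$, so that $F(\ph) = \ph + h'\big(\hr+\eps(\ph)\big) + \const$ and $F' = 1 + \big(h'(\hr+\eps)\big)'$. The change of variable $\th = F(\ph)$ gives $\int_\T \eps\circ f\ii\,\dd\th = \int_\T \eps\cdot F'\,\dd\ph = \int_\T \eps\,\dd\ph + \int_\T \eps\cdot\big(h'(\hr+\eps)\big)'\,\dd\ph$, hence
\[
\int_\T(\eps - \eps\circ f\ii) \;=\; -\int_\T \eps\cdot\big(h'(\hr+\eps)\big)'\,\dd\ph \;=\; -\Big[\,\eps\,h'(\hr+\eps) - h(\hr+\eps)\,\Big]_0^1 \;=\; 0,
\]
the middle equality because $\big(\eps\,h'(\hr+\eps) - h(\hr+\eps)\big)' = \eps\cdot\big(h'(\hr+\eps)\big)'$, and the last because $\eps$ --- and therefore $h'(\hr+\eps)$ and $h(\hr+\eps)$ --- is $1$-periodic. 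Conceptually this just records that the area-preserving twist $\Phi^h$, which fixes every horizontal circle, carries the region between the graph $r=\hr+\eps(\th)$ and the circle $r=\hr$ onto the region between its image and that same circle, so that $\int_\T\eps = \int_\T\eps\circ f\ii$.

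\emph{Invariance of the graph.} Fix $\th\in\T$ and follow the point $P_\th \defeq \big(\th,\hr+\eps(\th)\big)$ of the graph. Since $f(\th) = \th + h'\big(\hr+\eps(\th)\big)$ in~$\T$, applying $\Phi^h$ sends $P_\th$ to $\big(f(\th),\,\hr+\eps(\th)\big)$, and then applying $\Phi^w$ sends this to $\big(f(\th),\,\hr+\eps(\th)-w'(f(\th))\big)$. Evaluating~\eqref{eqdefwp} at the point $f(\th)$ yields $-w'(f(\th)) = \eps(f(\th)) - \eps\big(f\ii(f(\th))\big) = \eps(f(\th)) - \eps(\th)$, so $\hr+\eps(\th)-w'(f(\th)) = \hr+\eps(f(\th))$, and therefore $\Phi^w\circ\Phi^h(P_\th)=P_{f(\th)}$, again a point of the graph. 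Thus the graph is invariant under $\Phi^w\circ\Phi^h$, and since $\th\mapsto P_\th$ is a global parametrization of the graph, the induced dynamics on it is precisely $\th\mapsto f(\th)$.

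No genuine difficulty arises here: apart from the elementary verification that $\eps - \eps\circ f\ii$ has zero mean --- which, after the substitution $\th = F(\ph)$, is just the integration by parts of a periodic derivative --- the argument is pure bookkeeping with the explicit formula $\Phi^w\circ\Phi^h(\th,r) = \big(\th+h'(r),\,r-w'(\th+h'(r))\big)$.
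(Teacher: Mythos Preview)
Your proof is correct and follows essentially the same approach as the paper's: the same change of variable to reduce the zero-mean check to the vanishing of $\int_\T \eps\cdot\big(h'(\hr+\eps)\big)'$, handled by recognizing an exact periodic derivative (the paper integrates by parts to $\int_\T \eps'\,h'(\hr+\eps)$, which is the same thing), and then the same direct verification that $\Phi^w\circ\Phi^h(P_\th)=P_{f(\th)}$. Your added area-preservation remark is a nice gloss but not a different argument.
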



\begin{proof}
Let us check that the \rhs\ of~\eqref{eqdefwp} has zero mean value:
\[
\angD{\eps\circ f\ii} = \int_\T \eps\big( f\ii\big(\ti\th\big) \big)
\,\dd\ti\th
= \int_\T \eps(\th) f'(\th) \,\dd\th
\quad\text{and}\quad
f'(\th) = 1 + \frac{\dd\,}{\dd\th} \big[ h'\big(\hr+\eps(\th)\big) \big],
\]
hence $\angD{\eps} - \angD{\eps\circ f\ii} = 
- \int_\T \eps(\th) \frac{\dd\,}{\dd\th} \big[
h'\big(\hr+\eps(\th)\big) \big] \,\dd\th
= \int_\T \eps'(\th) h'\big(\hr+\eps(\th)\big) \,\dd\th
= 0$,
since this is the mean value of the derivative of the periodic
function
$h\big(\hr+\eps(\th)\big)$. 
Consequently, any primitive of $\eps-\eps\circ f\ii$ induces a smooth
function on~$\T$.


Now, consider an arbitrary point
$(\th,r) = \big( \th, \hr + \eps(\th) \big)$
on the graph mentioned in the statement.
Its image by $\Phi^w \circ \Phi^h$ is
$(\th_1,r_1) \defeq \big( \th+h'(r)+\Z, r-w'(\th_1) \big)$.
We have 
\[
\th_1 = 
\th + h'\big( \hr + \eps(\th) \big) +\Z = f(\th)
\]
and $r_1 = \hr + \eps(\th) - w'(\th_1) = \hr + \eps\circ f\ii(\th_1) - w'(\th_1) = \hr+\eps(\th_1)$.
\end{proof}


Let us have a look at the solutions of~\eqref{eqdefwp}
in the case of Gevrey data, with~$h$ as in~\eqref{CasePhihF},
\ie $h'(r) \equiv \om_1 + r$:

\begin{lemma}   \label{lemSolGevEqw}
Let $L'>L$. 
Suppose 
\[ 
f(\th) = \th + \om_1 + \hr + \eps(\th) \modZ
\quad\text{for all $\th\in\T$,}
\quad\text{and}\quad
\eps = \de + \eps_*, 
\]
where $\hr,\de \in \R$ and $\eps_* \in G^{\al,L'}(\T)$ satisfies
$\normD{\eps_*}_{\al,L'} \le \epsi$
with $\epsi = \epsi(\al,L,L')$ as in Lemma~\ref{lemGevInv}.
Then~$f$ is a circle diffeomorphism and Equation~\eqref{eqdefwp} has a solution~$w$ such that
\begla
\normD{w}_{\al,L} \le (1+2 L^\al)\normD{\eps_*}_{\al,L'}.
\edla
\end{lemma}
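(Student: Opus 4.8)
The statement asks for two things: that $f$ is a circle diffeomorphism, and that the cohomological equation $-w' = \eps - \eps\circ f^{-1}$ has a solution $w$ with a quantitative Gevrey bound in terms of $\normD{\eps_*}_{\al,L'}$. The first point is immediate: since $h'(r) \equiv \om_1 + r$, we have $f(\th) = \th + (\om_1+\hr) + \eps(\th) \bmod \Z$, whose lift $F(\th) = \th + \om_1 + \hr + \eps(\th)$ has derivative $1 + \eps_*'(\th)$, and $\normD{\eps_*'}_{\al,L'}$ is controlled (in sup norm in particular) by a constant times $\normD{\eps_*}_{\al,L'} \le \epsi$; choosing $\epsi$ small enough (as provided by Lemma~\ref{lemGevInv}) forces $F' > 0$, so $F$ is an increasing diffeomorphism of $\R$ commuting with unit translation, hence $f$ is a circle diffeomorphism. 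By Proposition~\ref{propB}, the equation~\eqref{eqdefwp} then admits a smooth solution $w$ on $\T$, unique up to an additive constant; the remaining work is purely the Gevrey estimate.

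**The estimate.** I would write $w' = \eps\circ f^{-1} - \eps = (\de + \eps_*\circ f^{-1}) - (\de + \eps_*) = \eps_*\circ f^{-1} - \eps_*$, so the constant $\de$ drops out and $w' = \eps_*\circ f^{-1} - \eps_*$. The plan is to bound each of the two terms in $G^{\al,L}$-norm and then pass from $w'$ to $w$. For the second term, $\normD{\eps_*}_{\al,L} \le \normD{\eps_*}_{\al,L'}$ since $L < L'$ (a larger $L'$ is a weaker requirement, i.e. a stronger norm). For the first term $\eps_*\circ f^{-1}$, the point is that $f^{-1}(\ti\th) = \ti\th - (\om_1+\hr) + \psi(\ti\th)$ for some Gevrey-$(\al,L')$ perturbation $\psi$ with $\normD{\psi}_{\al,L'}$ controlled by $\normD{\eps_*}_{\al,L'}$; this is exactly what Lemma~\ref{lemGevInv} is designed to supply (inversion of a near-identity Gevrey circle map with norm control, valid once $\normD{\eps_*}_{\al,L'} \le \epsi$). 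Composing $\eps_*$ with a translation costs nothing in Gevrey norm, and composing with the near-identity map $\id + \psi$ is handled by the Gevrey composition estimates from the Appendix; the net effect is $\normD{\eps_*\circ f^{-1}}_{\al,L} \le (\text{const})\,\normD{\eps_*}_{\al,L'}$, where the drop from $L'$ to $L$ is precisely what absorbs the loss in the composition lemma. Finally, integrating: if $g$ has zero mean on $\T$ (which $w'$ does, as shown in the proof of Proposition~\ref{propB}) then its primitive $w$ satisfies $\normD{w}_{\al,L} \le \normD{g}_{\al,L}$ in the relevant convention (one Fourier-side integration divides the $k$-th coefficient by $2\pi i k$, which does not increase the weighted norm, or alternatively one bounds derivatives of $w$ by derivatives of $g$ shifted by one order, which is what the factor structure of the Gevrey norm allows after a mild adjustment — here the harmless constant $1 + 2L^\al$ in the statement is the bookkeeping for that adjustment and for the $L' \to L$ room).

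**Main obstacle.** The only genuinely delicate point is tracking the constants through the composition $\eps_* \circ f^{-1}$ so that the final bound comes out with the clean constant $(1 + 2L^\al)$ and with the norm on the right taken at the larger parameter $L'$. Everything hinges on choosing the threshold $\epsi = \epsi(\al,L,L')$ (from Lemma~\ref{lemGevInv}) small enough that (i) $f$ is invertible, (ii) the inverse $f^{-1}$ is Gevrey-$(\al,L')$ with controlled norm, and (iii) the composition estimate for $\eps_* \circ (\id + \psi)$ can be applied with the output landing in $G^{\al,L}$ rather than merely $G^{\al,L'}$. I expect the bookkeeping of these constants — in particular verifying that the gap $L' - L > 0$ is exactly what the composition lemma in the Appendix needs — to be the crux; the rest is routine. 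In writing it up I would simply cite Lemma~\ref{lemGevInv} for the inversion and the Appendix's composition/integration estimates, apply them in the order (invert $f$; decompose $w' = \eps_*\circ f^{-1} - \eps_*$; estimate each piece; integrate), and collect the constants.
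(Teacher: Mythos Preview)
Your proposal is correct and follows essentially the same route as the paper: factor out the constant~$\de$ so that $w' = \eps_*\circ f^{-1} - \eps_*$, invoke Lemma~\ref{lemGevInv} to control the composition term in $G^{\al,L}$ by $\normD{\eps_*}_{\al,L'}$, and then integrate, with the constant $(1+2L^\al)$ arising as $2\cdot(\tfrac12 + L^\al)$ from the primitive estimate $\normD{w}_{\al,L}\le(\tfrac12+L^\al)\normD{w'}_{\al,L}$. One small streamlining in the paper worth noting: rather than writing $f^{-1}=\ID-(\om_1+\hr)+\psi$ and then separately bounding~$\psi$ and invoking a composition lemma, the paper factors $f=(\ID+\om)\circ(\ID+\eps_*)$ with $\om=\om_1+\hr+\de$ and applies directly the bound $\normD{\eps_*\circ(\ID+\eps_*)^{-1}}_{\al,L}\le\normD{\eps_*}_{\al,L'}$ which is part of the statement of Lemma~\ref{lemGevInv}; this gives the constant~$1$ (not merely ``some constant'') for that term and makes the final $(1+2L^\al)$ exact rather than merely plausible.
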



\begin{proof}
We can write $f = (\ID+\om) \circ (\ID+\eps_*)$ with $\om \defeq \om_1+\hr+\de$.
By Lemma~\ref{lemGevInv}, we obtain that $\ID+\eps_*$ is a
diffeomorphism of~$\R$, which (because of periodicity) can be viewed
as the lift of a circle diffeomorphism.
Hence~$f$ is a circle diffeomorphism and the \rhs\ of~\eqref{eqdefwp}
is
\[
g \defeq \eps - \eps\circ f\ii =
\eps_* - \eps_*\circ f\ii =
\eps_* - \eps_*\circ (\ID+\eps_*)\ii \circ (\ID-\om).
\]
Lemma~\ref{lemGevInv} yields 
\label{secUseGevInv}
$\normD{\eps_*\circ (\ID+\eps_*)\ii}_{\al,L} \le
\normD{\eps_*}_{\al,L'}$, which easily implies
$\normD{g}_{\al,L} \le 2\normD{\eps_*}_{\al,L'}$.

Now, we already know that~$g$ has zero mean value, and a solution
to~\eqref{eqdefwp} can be defined by the formula
\[
w(\th+\Z) \equiv - \int_0^\th g(\th_1)\,\dd\th_1
\quad\text{for $\th \in (-\dem,\dem]$.}
\]
One has $\normD{w}_{C^0(\T)} \le \dem \normD{g}_{C^0(\T)}$ and, for
each $k\ge0$,
\[
\frac{L^{(k+1)\al}}{(k+1)!^\al} \normD{w^{(k+1)}}_{C^0(\T)} =
\frac{L^{(k+1)\al}}{(k+1)!^\al} \normD{g^{(k)}}_{C^0(\T)} 
\le L^\al \cdot \frac{L^{k\al}}{k!^\al} \normD{g^{(k)}}_{C^0(\T)},
\]
whence $\normD{w}_{\al,L} \le (\dem+L^\al) \normD{g}_{\al,L}$
and the conclusion follows.
\end{proof}


\subsection{
Proof of Proposition~\ref{prop.isolation}}


\noinbf{a}
We start with an elementary fact.

\begin{lemma}   \label{lemchoiceofQ}
Suppose $x$ and~$\nu$ are real, with $\nu>0$ small enough. 
Then there exist $Q\in\N^*$ and $P\in\Z$ coprime such that
$\absD{x-P/Q} < \nu/2$ and 
$1 < \frac{2}{\nu} < Q < \frac{3}{\nu}$.
\end{lemma}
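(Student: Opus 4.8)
The plan is to use the theory of continued fractions, or more precisely the mediant/Stern--Brocot structure of rationals, to locate a fraction $P/Q$ with controlled denominator very close to~$x$. First I would fix an integer $Q_0$ with $\frac{2}{\nu} < Q_0 < \frac{3}{\nu}$; such an integer exists as soon as the interval $\bigl(\frac{2}{\nu},\frac{3}{\nu}\bigr)$ has length $\frac{1}{\nu} > 1$, i.e.\ for $\nu < 1$, which is guaranteed by ``$\nu$ small enough''. Among all fractions whose denominator lies in the window $\bigl(\frac{2}{\nu},\frac{3}{\nu}\bigr)$ — or more simply, by considering the Farey/Stern--Brocot fractions — one wants one whose distance to~$x$ is less than $\nu/2$.

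The cleanest route is the following. Consider the continued fraction convergents $p_k/q_k$ of~$x$ (if $x$ is rational the expansion terminates, which only makes things easier); pick the largest $k$ with $q_k \le \frac{2}{\nu}$, and consider the intermediate fractions (semiconvergents) $\frac{p_{k-1}+ j\,p_k}{q_{k-1}+ j\,q_k}$ for $j=1,2,\dots$. As $j$ increases by~$1$ the denominator increases by $q_k \le \frac{2}{\nu}$, so the denominators $q_{k-1}+j q_k$ sweep through the interval $\bigl(\frac{2}{\nu},\frac{3}{\nu}\bigr)$ in steps smaller than its length $\frac{1}{\nu}$; hence for some~$j$ the denominator $Q \defeq q_{k-1}+j q_k$ satisfies $\frac{2}{\nu} < Q < \frac{3}{\nu}$. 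Set $P \defeq p_{k-1}+j p_k$; then $\gcd(P,Q)=1$ because consecutive (semi)convergents satisfy $|p_k q_{k-1}-p_{k-1}q_k|=1$, and standard estimates give $\absD{x - P/Q} \le \frac{1}{Q\, q_{k+1}} \le \frac{1}{Q\, q_k}$ (one must be mildly careful with the boundary case where the semiconvergent is actually the next true convergent, but the bound $\absD{x-P/Q}< \frac{1}{Q^2}$ or at worst $\frac{1}{Q q_k}$ always holds for such an intermediate fraction lying on the correct side). Since $Q > \frac{2}{\nu}$ we get $\absD{x-P/Q} < \frac{\nu}{2}\cdot\frac{1}{q_k}\cdot\frac{\nu}{2}\cdot Q\cdots$ — more directly, $\absD{x-P/Q} \le \frac{1}{Q \cdot \frac{2}{\nu}} < \frac{\nu}{2Q} < \frac{\nu}{2}$ using $q_k \ge \frac{2}{\nu}/Q \cdot Q$... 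I would simply note $q_k \ge \frac{2}{\nu}\cdot\frac{q_k}{Q}$ and conclude from $Q<\frac{3}{\nu}$, i.e.\ the product $Q q_k$ is at least of order $\nu^{-2}$, giving $\absD{x-P/Q} = O(\nu^2) < \nu/2$ for $\nu$ small.

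An even shorter variant avoids continued fractions entirely: take any integer $Q$ with $\frac{2}{\nu}<Q<\frac{3}{\nu}$, let $P$ be the nearest integer to $Qx$, so $\absD{x-P/Q}\le \frac{1}{2Q} < \frac{\nu}{4} < \frac{\nu}{2}$ automatically; the only issue is coprimality. If $d \defeq \gcd(P,Q)>1$, replace $Q$ by a nearby prime in the window $\bigl(\frac{2}{\nu},\frac{3}{\nu}\bigr)$, or perturb $Q$ to $Q\pm 1$: one checks that among $Q$ and $Q+1$ (both still in the window when $\nu$ is small, since the window has length $>1$ and one can shrink the effective window to $\bigl(\frac{2}{\nu}+1,\frac{3}{\nu}-1\bigr)$) at least one yields a $P'=\operatorname{round}(Q'x)$ coprime to $Q'$ — indeed one can even run over all $Q'$ in a sub-window and note that $\absD{x-P'/Q'}\le \frac{1}{2Q'}<\frac{\nu}{4}$ for each, and a fraction $P'/Q'$ in lowest terms is obtained by reducing, but reduction can only decrease the denominator, possibly pushing it out of the window; so I would instead argue directly: the interval $(x-\nu/2,x+\nu/2)$ has length $\nu$ and any interval of length $\nu$ contains a fraction with denominator exactly in $\bigl(\frac{2}{\nu},\frac{3}{\nu}\bigr)$ in lowest terms, because the Farey fractions of order $N\defeq \flo{3/\nu}$ are $\frac{1}{N}$-dense — actually $\frac{1}{N^2}$... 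The \textbf{main obstacle} is precisely this bookkeeping around coprimality together with keeping $Q$ strictly inside the open window $\bigl(\frac{2}{\nu},\frac{3}{\nu}\bigr)$; the continued-fraction approach handles both at once because semiconvergents are automatically in lowest terms, so that is the version I would write up, spending the bulk of the argument on verifying the semiconvergent denominators land in the window and on the boundary edge case.
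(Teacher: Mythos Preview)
Your continued-fraction route has a genuine gap. You write that the semiconvergent denominators $q_{k-1}+jq_k$ ``sweep through the interval $\bigl(\tfrac{2}{\nu},\tfrac{3}{\nu}\bigr)$ in steps smaller than its length $\tfrac{1}{\nu}$'', but you have only arranged $q_k\le \tfrac{2}{\nu}$, which is \emph{twice} the window length. The sequence can therefore jump over the window. A concrete counterexample: take $\nu=0.01$ and $x=[a_0;3,16,3,2,\dots]$, so that $q_0=1$, $q_1=3$, $q_2=49$, $q_3=150$, $q_4=349$. Then $k=3$ (since $q_3=150\le 200<349=q_4$) and the semiconvergent denominators $q_2+jq_3=49+150j$ are $199,349,\dots$ --- none lies in $(200,300)$. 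Checking the other levels ($j$-ranges bounded by the partial quotients $a_n$) one sees that \emph{no} convergent or semiconvergent of~$x$ has denominator in $(200,300)$. So semiconvergents alone are not enough to hit the prescribed window; your approximation estimate $\absD{x-P/Q}<1/(Qq_k)<\nu/2$ is fine, but there may be no admissible $P/Q$ to apply it to. (A repair is possible --- e.g.\ choose instead the largest $k$ with $q_k\le \tfrac{1}{\nu}$ and then argue across two levels --- but this needs real work and is not what you wrote.)

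Ironically, the clean fix is the idea you mention and then abandon. The paper simply takes $Q$ to be a \emph{prime} in $\bigl(\tfrac{2}{\nu},\tfrac{3}{\nu}\bigr)$; such a prime exists for $\nu$ small enough by the Prime Number Theorem (or any result guaranteeing a prime in $(n,\tfrac{3}{2}n)$ for large~$n$). Then the interval $\bigl(Qx-\tfrac{Q\nu}{2},\,Qx+\tfrac{Q\nu}{2}\bigr)$ has length $Q\nu>2$, so it contains two consecutive integers; since $Q$ is prime, at most one of them is a multiple of~$Q$, and the other serves as~$P$ with $\gcd(P,Q)=1$ and $\absD{x-P/Q}<\nu/2$. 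This two-line argument handles the coprimality and the window simultaneously, with no continued fractions and no edge cases.
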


\begin{proof}
For $\nu>0$ small enough, thanks to the Prime Number Theorem, we can
pick a prime number~$Q$ in $(2\nu\ii,3\nu\ii)$
(\eg $Q=p_k$ with
$k\defeq \flo*{{5\nu\ii}/{2\absD{\ln\nu}}}$,
where $p_k \sim k\ln k$ is the $k$th prime number).
The interval $(Qx-\frac{Q\nu}{2},Qx+\frac{Q\nu}{2})$ has length $>2$,
hence it contains at least two consecutive integers, one of which is
not a multiple of~$Q$ and can be taken as~$P$.
\end{proof}


We now define 
\[
L'\defeq 2L, \qquad
c \defeq \max\{ 3^{\ga+1} \ti c, 2^{\ga+1} c_1 \}
\]
with $\ti c=\ti c(\al,L')$ as in~\eqref{ineq:normDeQ}
and $c_1 = c_1(\al,L)$ as in Lemma~\ref{lembump},
and suppose that we are given $\nu,\sig,\ell,\br$ as in the
statement of Proposition~\ref{prop.isolation}, with~$\nu$ small enough
so as to be able to apply Lemma~\ref{lemchoiceofQ}.

Applying Lemma~\ref{lemchoiceofQ} with $x = \om_1+\br$,
we get a rational $P/Q$ such that $P\wedge Q=1$ and
\[
1 < 2/\nu < Q < 3/\nu
\quad \text{and} \quad
\frac{P}{Q} = \om_1 + \hr
\quad \text{with} \quad 
\absD{\hr-\br} < \nu/2.
\]


\noinbf{b}
Let us choose a function
$\De = \De_Q \in G^{\al,L'}(\T)$ satisfying~\eqref{eqdefDeQ}--\eqref{ineq:normDeQ} and apply
Proposition~\ref{propA}.
We can do so {since $2c \ge 3^\ga\ti c$}, hence
{$0 < \sig \le \ex^{- 2 c \nu^{-\ga}}
<  \ex^{- \ti c Q^{\ga}} 
\le \frac{1}{\max(-\De')}$}
by~\eqref{ineq:normDeQ}.
We get an integer~$q$ and a real~$\de$ satisfying
\[
{
q \in [\ell , \ell Q] \subset [\ell, 3 \ell / \nu ],
\qquad
0 < \de \le \frac{1}{\ell Q} < \frac{\nu}{2\ell} \le \frac{\nu^2}{12},
}
\]
so that $\frac{1}{2Q}+\Z$ is a $\sig$-isolated periodic point
of period~$q$ for the circle map~$f$ defined by
\[
\th \in \T \mapsto
f(\th) \defeq \th + \frac{P}{Q} + \de + \eps_*(\th)
\modZ
\qquad\text{with $\eps_* \defeq \sig \De$.}
\]
Moreover, since $\ell\ge 6/\nu \ge 2Q$ and $\frac{1}{Q} < \frac{\nu}{2}$,
the set $\{ f^s(\frac{1}{2Q}+\Z) \mid s \in \N,\; 2/\nu\le
    s \le 6/\nu \}$ is $\frac{\nu}{2}$-dense in~$\T$.


\noinbf{c}
We are in the situation of Lemma~\ref{lemSolGevEqw}, with
\[
{
\normD{\eps_*}_{\al,L'} \le \sig L'^\al \ex^{\ti c \, Q^\ga}
\le L'^\al \ex^{-(2c-3^\ga\ti c)\nu^{-\ga}}
\le L'^\al \ex^{-\frac{5c}{3} \nu^{-\ga}}
}
\]
which is less than $\epsi(\al,L,L')$ for~$\nu$ small enough,
thus Equation~\eqref{eqdefwp} with $\eps \defeq \de+\eps_*$ 
and $h' = \ID + \om_1$ has a
solution $w\in G^{\al,L}(\T)$ such that
\[
%
%
\normD{w}_{\al,L} \le (1+2 L^\al)\normD{\eps_*}_{\al,L'}
\le (1+2 L^\al) L'^\al \ex^{- \frac{5c}{3} \nu^{-\ga}}.
%
%
\]
Proposition~\ref{propB} now tells us that the annulus map
$\ti F \defeq \Phi^w \circ F_0\col M_1 \righttoleftarrow$ leaves invariant the graph
$\cG \defeq \big\{ \big( \th, \hr + \eps(\th) \big) \mid \th \in \T\big\}$,
with $f\col \T \righttoleftarrow$ as induced dynamics on~$\cG$.
In particular, 
\[
z_1 \defeq \big(\tfrac{1}{2Q}+\Z, \hr+\eps(\tfrac{1}{2Q}) \big) \in
\T\times\R
\]
is a $\sig$-isolated periodic point of period~$q$ for~$\ti F$,
with all its orbit contained in~$\cG$.
Notice that, for~$\nu$ small enough,
$\normD{\eps}_{C^0(\T)} \le \de + \normD{\eps_*}_{C^0(\T)}
\le \nu^2/12 + L'^\al \ex^{-\frac{5c}{3} \nu^{-\ga}}
\le \nu/4$,
hence {$\cG \subset \cV(\hr,\nu/4) \subset \cV(\br,3\nu/4)$ and
$\{ \ti F^s(z_1) \mid s \in \N,\; {2}/{\nu} \le
    s \le {6}/{\nu} \}$ is $2\nu$-dense in $\cV(\br,\nu)$.}


\noinbf{d}
The only shortcoming of the perturbation~$\Phi^w$ is that it is not
supported on the strip $\cV(\br,\nu)$, but this is easy to remedy:
we will multiply~$w$ by a function which vanishes outside
$\cV(\br,\nu)$ without modifying the dynamics in $\cV(\hr,\nu/4)$.
Note that 
\[
\cV(\hr,\nu/4) \subset \cV(\hr,\nu/2) \subset
\cV(\br,\nu).
\]
Let us thus pick $\eta \in G^{\al,L}(\R)$ such that 
$\eta(r) = 1$ for all $r \in [\hr-\nu/4,\hr+\nu/4]$ and 
$\eta(r) = 0$ whenever $\absD{r-\hr} \ge \nu/2$.
According to Lemma~\ref{lembump}, we can achieve
$\normD{\eta}_{\al,L} \le \exp\big( 2^{\ga} c_1 \nu^{-\ga} \big)$
(using, in fact, a non-periodic version of Lemma~\ref{lembump}, with $p=\frac{2}{\nu}$).
We now set 
\[
u(\th,r) \defeq \eta(r) w(\th)
\quad \text{for all $(\th,r) \in \T \times \R$.}
\]
{One can check that~$u$ satisfies conditions~(1) and~(2) of
Proposition~\ref{prop.isolation} for~$\nu$ small enough,}
because then $\normD{w}_{\al,L} \le \demi \ex^{- \frac{3c}{2} \nu^{-\ga}}$,
while $\normD{\eta}_{\al,L} \le \ex^{\frac{c}{2} \nu^{-\ga}}$.
Since $F \defeq \Phi^u\circ F_0$ and~$\ti F$ coincide on~$\cG$ (in
fact on all of $\cV(\hr,\nu/4)$), requirements~(3) and~(4) are also fulfilled.



\section{Coupling lemma and synchronized diffusion \textit{\`a la Herman}}
\label{secSynchrDiffH}


\subsection{Coupling lemma}


The following ``coupling lemma'' due to M.~Herman was already used in
\cite{hms}, \cite{ms}, \cite{lms} to construct examples of unstable
near-integrable Hamiltonian flows.

\label{seclemcoupl}
\begin{lemma}   \label{lemcoupl} 
Let $M$ and~$M'$ be symplectic manifolds.
Suppose we are given two maps, 
$F \col M \righttoleftarrow$ and $G \col M' \righttoleftarrow$,
and two Hamiltonian functions $f \col M\to\R$ and $g \col M'\to\R$
which generate complete vector fields and define time-1 maps~$\Phi^f$ 
and~$\Phi^g$.
Suppose moreover that $z_*\in M$ is $F$-periodic, of period~$q$, and that
\begin{align} \label{condition.f1} 
& & & & &f(z_*)=1, & 
&\dd f(z_*)=0 \\ \label{condition.f2}
& & & & & f(F^s(z_*))=0, & 
& \dd f(F^s(z_*)) =0\quad \text{for $1\le s\le q-1$}. \end{align}
Then $f\otimes g$ generates a complete Hamiltonian vector field
and the maps
\[
T \defeq \Phi^{f\otimes g}\circ(F\times G) \col M \times M' \righttoleftarrow
\quad\text{and}\quad
\psi \defeq \Phi^g\circ G^q \col M' \righttoleftarrow
\]
satisfy
\beglab{eq:orbsynchr}
%
%
T^{n q+s}(z_*,z') = \big( F^s(z_*),G^s\circ \psi^n(z') \big)
\edla
for all $z'\in M'$ and $n, s \in \Z$ such that $0 \le s \le q-1$.
\end{lemma}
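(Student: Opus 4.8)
The statement is essentially a formal computation once one understands what the hypotheses on $f$ do: the Hamiltonian $f\otimes g$ (meaning $(z,z')\mapsto f(z)g(z')$) has a flow that, restricted to a fibre $\{z\}\times M'$, depends only on the value of $f(z)$, since freezing the first coordinate turns $f\otimes g$ into $f(z)\cdot g$, whose time-$1$ map is $\Phi^{f(z)g}$. Conditions~\eqref{condition.f1}--\eqref{condition.f2} say that along the periodic orbit of $z_*$, $f$ vanishes with its differential at every point except $z_*$ itself, where $f=1$ and $\dd f=0$. The vanishing of $\dd f$ at all these points is what guarantees that the vector field of $f\otimes g$ is \emph{tangent to the fibres} $\{F^s(z_*)\}\times M'$ over the orbit — more precisely, at a point $(w,z')$ with $\dd f(w)=0$ the Hamiltonian vector field of $f\otimes g$ has zero component in the $M$-direction and equals $f(w)$ times the Hamiltonian vector field of $g$ in the $M'$-direction. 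Hence over the orbit of $z_*$ the map $\Phi^{f\otimes g}$ preserves each fibre and acts on it as $\mathrm{id}$ (if $f(w)=0$) or as $\Phi^g$ (if $f(w)=1$, i.e. $w=z_*$).

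\textbf{Step 1: completeness.} I would first dispose of completeness of the vector field generated by $f\otimes g$. This should follow from completeness of the flows of $f$ and of $g$ together with the product structure; alternatively, since in our application $f$ will be compactly supported (it is built from the $\Phi^u$, $\Phi^v$ of Proposition~\ref{prop.diffusion.discrete}), one can argue directly. I would keep this brief and cite the Appendix Gevrey/flow estimates if needed; it is not the heart of the matter.

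\textbf{Step 2: the one-step identity.} The key computation is for $s=0$ and one application of $T$, and then more generally: I claim that for every $w$ on the orbit $\{F^s(z_*): 0\le s\le q-1\}$ and every $z'\in M'$,
\[
\Phi^{f\otimes g}(w,z') = \big(w,\ \Phi^{f(w)g}(z')\big).
\]
To see this, consider the curve $t\mapsto \Phi^{t(f\otimes g)}(w,z')$. Because $\dd f(w)=0$, the $M$-component of the Hamiltonian vector field of $f\otimes g$ at $(w,z')$ vanishes: $X_{f\otimes g} = g(z')X_f(w) \oplus f(w)X_g(z')$ and the first summand is $g(z')X_f(w)=0$. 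So the $M$-coordinate stays equal to $w$ for all time, and along this curve the equation for the $M'$-coordinate is exactly $\dot z' = f(w)X_g(z')$, the Hamiltonian flow of the function $f(w)g$; integrating to time $1$ gives $\Phi^{f(w)g}(z')$. Now $f(w)=1$ when $w=z_*$, giving $\Phi^g(z')$, and $f(w)=0$ when $w=F^s(z_*)$, $1\le s\le q-1$, giving $z'$ unchanged. Therefore, using $T=\Phi^{f\otimes g}\circ(F\times G)$,
\[
T(F^s(z_*),z') = \Phi^{f\otimes g}\big(F^{s+1}(z_*),\,G(z')\big) =
\begin{cases}
\big(F^{s+1}(z_*),\,G(z')\big), & 0\le s\le q-2,\\[1ex]
\big(z_*,\,\Phi^g(G(z'))\big), & s=q-1,
\end{cases}
\]
using $F^q(z_*)=z_*$ in the second line.

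\textbf{Step 3: iteration.} Formula~\eqref{eq:orbsynchr} then follows by induction on $nq+s$: within a block of $q$ consecutive iterates the first coordinate cycles through $F^0(z_*),\dots,F^{q-1}(z_*)$ while the second coordinate is pushed forward by $G$ at each step, and exactly once per block — at the passage through $z_*$ — it additionally gets hit by $\Phi^g$; collecting the $q$ copies of $G$ and the single $\Phi^g$ over a block gives one application of $\psi=\Phi^g\circ G^q$ (here one uses that $\Phi^g$ and $G$ commute with... no — rather one tracks the order carefully: after $n$ full blocks the $M'$-coordinate is $\psi^n(z')$, and $s$ further steps apply $G^s$, giving $G^s\circ\psi^n(z')$, which is the claimed right-hand side). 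The case of negative $n,s$ follows since every map in sight is a diffeomorphism and the identity, being an equality of maps valid on all fibres, inverts. I would write the induction step explicitly but tersely.

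\textbf{Main obstacle.} There is no deep obstacle; the one genuine subtlety — and the step I would be most careful about — is Step~2, the claim that $\dd f(w)=0$ forces the first coordinate to be fixed by the \emph{entire} flow $\Phi^{t(f\otimes g)}$, not merely to have zero initial velocity. This is correct because the set $\{(w,z'): \dd f(w)=0\}$ need not be invariant, so one cannot argue by invariance of a submanifold; instead one argues pointwise along the orbit of $z_*$, using that at \emph{each} of the finitely many points $F^s(z_*)$ the differential $\dd f$ vanishes, so the fibre over each such point is individually flow-invariant (the $M$-velocity is $g(z')X_f(F^s(z_*))=0$ identically along that fibre). Getting the bookkeeping of the order of $G$'s and $\Phi^g$ right in Step~3 is the other place to be careful, but it is routine.
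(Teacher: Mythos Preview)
Your argument is correct. The paper itself gives no proof of this lemma, referring instead to \cite{hms}, so there is nothing to compare against beyond noting that what you wrote is the standard argument.

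One remark on Step~1, which you left vague: completeness of $X_{f\otimes g}$ does follow cleanly from completeness of $X_f$ and $X_g$, because $f$ and~$g$ are \emph{separately} conserved along the flow of $X_{f\otimes g}$. Indeed $\tfrac{\dd}{\dd t}f(z(t)) = g(z'(t))\,\dd f_{z(t)}\big(X_f(z(t))\big) = 0$, and likewise for~$g$; hence the integral curve through $(z_0,z'_0)$ is explicitly
\[
t\longmapsto\big(\Phi^{t\,g(z'_0)f}(z_0),\ \Phi^{t\,f(z_0)g}(z'_0)\big),
\]
a product of constant-speed reparametrisations of the complete flows of~$X_f$ and~$X_g$. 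This explicit formula also subsumes your Step~2: taking $z_0=w=F^s(z_*)$ one has $X_f(w)=0$ so the first factor is constant, and $f(w)\in\{0,1\}$ gives the second factor as $\id$ or~$\Phi^g$ directly, with no separate fibre-invariance argument needed. Your bookkeeping in Step~3 is fine.
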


We have denoted by~$f\otimes g$ the function $(z,z')\mapsto f(z)g(z')$,
and by~$F\times G$ the map $(z,z')\mapsto (F(z),G(z'))$.

\begin{proof} See \cite{hms}. \end{proof}


%
\subsection{Proof of Proposition~\ref{prop.diffusion.discrete}}
\label{sec:pfPropDiffDiscr}


\noinbf{a}
Given $\br\in\R$ and $\nu>0$ small enough, we apply
Proposition~\ref{prop.isolation} with
\[
\sig \defeq \ex^{-2c\nu^{-\ga}}
\]
(where $c=c(\al,L)$ is provided by Proposition~\ref{prop.isolation})
and an integer $\ell\ge 6/\nu$ that we will specify later.

We thus get a function $u\in G^{\al,L}(M_1)$ and a map $F=\Phi^u\circ
F_0\col M_1 \righttoleftarrow$ satisfying properties (1)--(4) of
Proposition~\ref{prop.isolation}.
We call~$\sz$ the $\sig$-isolated periodic point mentioned in property~(4),
the period of which is an integer $q \in [\ell,3\ell/\nu]$.

Let $f \defeq \eta_{\sz,\sig}$ be defined by
Lemma~\ref{lemmaetaznu}.
Observe that $f$, ${F}$ and $z_*=\sz$ satisfy conditions
\eqref{condition.f1}--\eqref{condition.f2} of Lemma~\ref{lemcoupl}
because~$\sz$ is a $\sig$-isolated periodic point.


\noinbf{b}
We now define $g\col M_2 \to \R$ by the formula $g(r_2,\th_2)= -\frac{1}{2\pi q} \sin(2\pi \th_2)$.
According to~\eqref{eq:trivex}, we have
$\Phi^g(\th_2,r_2)= \big(\th_2, r_2+\frac{1}{q}\cos(2\pi\th_2) \big)$
for all $(\th_2,r_2) \in M_2$. 
In particular,
\[
\Phi^g(0+\Z, r_2) = \big(0+\Z, r_2 + \tfrac{1}{q}\big)
\quad \text{for all $r_2\in \R$}.
\]
On the other hand,~\eqref{eq:defFzGz} gives
$G_0^s(\th_2,r_2) = \big( \th_2 + s(\om_2+r_2) + \Z, r_2 \big)$ for
all $s\in\Z$.
Therefore
\[
\psi \defeq \Phi^g \circ G_0^q
\]
satisfies $\psi^n(0+\Z,-\om_2) = (0+\Z, -\om_2 + \frac{n}{q})$ for all
$n\in\Z$, whence
\beglab{eq:fullorbpsi}
G_0^s \circ \psi^n\big(\ssz{0,0}\big) = \ssz{n,s}
\quad\text{with}\ens
\ssz{n,s} \defeq \Big( \frac{sn}{q} + \Z, -\om_2 + \frac{n}{q} \Big)
\quad \text{for all $n,s\in \Z$.}
\edla


\noinbf{c}
Let $v \defeq f \otimes g$.
We now apply Lemma~\ref{lemcoupl} with the above functions $f$ and~$g$
and the maps $F$ and~$G_0$, taking $z_*=\sz$.
In view of~\eqref{eq:orbsynchr}--\eqref{eq:fullorbpsi}, we get
\beglab{eq:Torb}
T^{nq+s}\big( \sz, \ssz{0,0} \big) =
\big( F^s(\sz), \ssz{n,s} \big)
\quad\text{for all $n, s \in \Z$ such that $0 \le s \le q-1$,}
\edla
with $T = \Phi^v \circ (F\times G_0) = \Phi^v \circ \Phi^u \circ
(F_0\times G_0)$.

Notice that, for~$\nu$ small enough, we have $\sig < \nu/4$, hence
$f\equiv0$ on $\cV(\br,\nu)^c$, thus~$v$ satisfies condition~(1) of
Proposition~\ref{prop.diffusion.discrete}, and we already knew
that~$u$ also did.

We have $\normD{u}_{\al,L} \le \demi \exp(- c \nu^{-\ga})$ with
$c=c(\al,L)$ stemming from Proposition~\ref{prop.isolation}.
We can also achieve $\normD{v}_{\al,L} \le \demi \exp(- c \nu^{-\ga})$
by choosing appropriately~$\ell$.
Indeed, calling~$K$ the Gevrey-$(\al,L)$ norm of the function
$\th\mapsto \frac{1}{2\pi} \sin(2\pi \th)$ and using $q\ge\ell$
and~\eqref{ineqnormetaznu},
we get
$\normD{v}_{\al,L} \le \frac{K}{q} \normD{\eta_{z,\nu}}_{\al,L} 
\le \frac{K}{\ell} \exp(c_2{\sig^{-\ga}})$, 
where $c_2=c_2(\al,L)$ stems from Lemma~\ref{lemmaetaznu}.
Therefore, the condition
\beglab{ineq:condellL}
q \ge \ell \ge L \defeq 2 K \ex^{c \nu^{-\ga}} \ex^{c_2 \sig^{-\ga}}
\edla
is sufficient to ensure that~$u$ and~$v$ satisfy condition~(2) of
Proposition~\ref{prop.diffusion.discrete}.
We will fine-tune our choice of~$\ell$ later, when considering the
``diffusion speed'' of the $T$-orbit described by~\eqref{eq:Torb}.


\noinbf{d}
We note that $\cV(\bar r,\nu) \times M_2$ is invariant by~$T$ because it
is invariant by~$T_0$ as well as by~$\Phi^{tu}$ and~$\Phi^{tv}$ for
all $t\in\R$ in view of the condition on the supports of~$u$ and~$v$.
Let $b\defeq \quart$.
To get condition~(3) of Proposition~\ref{prop.diffusion.discrete} and
thus complete its proof, we will require the following
%

\begin{lemma}    \label{lem:easydense}
The set
$\big\{ \big( F^s(\sz), \ssz{n,s} \big) \mid n,s\in\Z,\; \frac{2}{\nu} \le s
\le \frac{6}{\nu} \big\}$
is $3\nu$-dense in $\cV(\bar r,\nu) \times M_2$
if~\eqref{ineq:condellL} holds and~$\nu$ is small enough.
\end{lemma}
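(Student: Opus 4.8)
The plan is to check density separately in the two factors $M_1$ and $M_2$, exploiting that the set in question is a ``product-like'' family indexed by $s$ (with $n$ free in the second coordinate). For the first factor: by property~(4) of Proposition~\ref{prop.isolation} (which was transferred to~$F$ via the coincidence with~$\ti F$ on $\cV(\hr,\nu/4)$), the set $\{F^s(\sz)\mid s\in\N,\ 2/\nu\le s\le 6/\nu\}$ is $2\nu$-dense in $\cV(\br,\nu)$; in particular, for every point $z\in\cV(\br,\nu)$ there is an admissible~$s$ with $d\big(F^s(\sz),z\big)\le 2\nu$, and the orbit of $\sz$ stays inside $\cV(\br,\nu)$ since it lies on the graph $\cG\subset\cV(\hr,\nu/4)\subset\cV(\br,\nu)$ which is invariant. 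So the first coordinate is handled for any fixed choice of $s$ in the range $[2/\nu,6/\nu]$.

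For the second factor we must show that, for each such fixed admissible~$s$, the set $\{\ssz{n,s}\mid n\in\Z\}$ is $3\nu$-dense (in fact $2\nu$-dense suffices) in $M_2=\T\times\R$. Recall $\ssz{n,s}=\big(\frac{sn}{q}+\Z,\,-\om_2+\frac{n}{q}\big)$. As $n$ ranges over $\Z$, the $r_2$-coordinate $-\om_2+\frac{n}{q}$ ranges over the arithmetic progression $-\om_2+\frac1q\Z$, which is $\frac1q$-dense in~$\R$, hence $\nu$-dense once $q\ge 1/\nu$ — and indeed $q\ge\ell\ge 6/\nu$. Simultaneously, for each fixed $n$ with $\frac{n}{q}$ near a prescribed target in $r_2$, we still have freedom to adjust: the $\th_2$-coordinate is $\frac{sn}{q}$; since $s$ is coprime to $q$ for suitable $s$ — or, more robustly, since varying $n$ by multiples of $q$ leaves $r_2$ changed by integers (harmless after recentering) while moving $\th_2$ by $s\cdot\frac{q}{q}=s\in\Z$, which is again harmless — one checks that the two coordinates can be matched to within $\nu$ each. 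More concretely: given $(\th_2^0,r_2^0)\in M_2$, first pick $n_0$ with $\big|{-\om_2+\frac{n_0}{q}}-r_2^0\big|\le\frac1{2q}\le\nu$ after reducing $r_2^0$ mod the lattice spacing; the residual freedom $n\mapsto n_0+kq$, $k\in\Z$, does not change $r_2$ (mod integers, which we absorb into the choice of target representative on the $r_2$-line up to the already-allowed $3\nu$ slack) and sweeps $\th_2$ over all of $\T$ in steps of $\frac{s q}{q}\bmod 1$... here one must be slightly careful and instead use that among the $q$ values $\{\frac{s n}{q}+\Z : n_0\le n< n_0+q\}$ all residues $\frac{j}{q}+\Z$ appear (because $\gcd(s,q)$ divides... ) so $\th_2$ is $\frac1q$-dense, i.e.\ $\nu$-dense, and for each such $n$ the $r_2$-coordinate $-\om_2+\frac{n}{q}$ lies within $\frac{q-1}{q}<1$ of the chosen $r_2$-target, which after allowing the final $3\nu$ tolerance and the freedom to translate the target by integers in $r_2$ (the statement is about density in $M_2$, and translating by an integer in $r_2$ is covered because the progression $-\om_2+\frac1q\Z$ is genuinely $\frac1q$-dense in all of $\R$) is acceptable.

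Combining: given any $(z,w)\in\cV(\br,\nu)\times M_2$, choose $n\in\Z$ and then an admissible $s\in[2/\nu,6/\nu]$ so that $d\big(F^s(\sz),z\big)\le 2\nu$ in $M_1$ and $d(\ssz{n,s},w)\le\nu$ in $M_2$; using the product metric (sup or sum of the two factor-distances, rescaled so that the total is $\le 3\nu$), we get $d\big((F^s(\sz),\ssz{n,s}),(z,w)\big)\le 3\nu$, which is exactly the claimed $3\nu$-density. The main obstacle is the bookkeeping in the second factor: one has to order the choices correctly — the index $s$ controls the $M_1$-approximation and the $\th_2$-position, while $n$ controls the $r_2$-position — and verify that the residue class of $s$ modulo $\gcd$-type quantities does not obstruct reaching every point of $\T$ in the $\th_2$-direction; this is where the hypothesis $\ell\ge 6/\nu$ (hence $q\ge 6/\nu$, so that $1/q<\nu/6$) and the admissible range $2/\nu\le s\le 6/\nu$ (enough values of $s$ to realize the required density in $M_1$ while keeping $s/q$ controlled) are used. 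Everything else is a direct substitution into the explicit formulas for $F^s(\sz)$ (on the invariant graph $\cG$) and $\ssz{n,s}$.
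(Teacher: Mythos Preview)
Your treatment of the first factor~$M_1$ via property~(4) of Proposition~\ref{prop.isolation} is correct and matches the paper. The gap is entirely in the second factor $M_2=\T\times\R$.

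The central error is treating~$r_2$ as periodic. You write that $n\mapsto n_0+kq$ ``does not change $r_2$ (mod integers, which we absorb\ldots)'', but $r_2\in\R$, not~$\T$: shifting~$n$ by~$kq$ moves~$r_2$ by the integer~$k$, far outside any~$3\nu$ tolerance. Worse, that same shift changes~$\th_2$ by $\frac{skq}{q}=sk\equiv 0\pmod 1$, so it does nothing for~$\th_2$ either. Your fallback---``among the~$q$ values $\{\frac{sn}{q}+\Z:n_0\le n<n_0+q\}$ all residues appear''---requires $\gcd(s,q)=1$, which you never establish (and cannot, since~$s$ is already fixed by the~$M_1$ approximation and~$q$ is given); and even granting it, the corresponding $r_2$-coordinates would span an interval of length~$\approx1$, not~$\le\nu$. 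The final paragraph then reverses the order of choice (``choose $n\in\Z$ and then an admissible~$s$''), contradicting the earlier argument and still not resolving the issue.

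What is actually needed, once~$s$ is fixed, is a simultaneous approximation: among the integers~$n$ in the window of width~$\approx 2\nu q$ dictated by the $r_2$-constraint $\absD{\frac{n}{q}-(\om_2+r_2^0)}\le\nu$, find one with $\frac{sn}{q}$ also $\nu$-close to~$\th_2^0$ in~$\T$. The point is that, as~$n$ runs over that window, $\frac{sn}{q}\bmod 1$ advances in steps of~$\frac{s}{q}$ and sweeps total length $\approx 2\nu q\cdot\frac{s}{q}=2\nu s\ge 4$ (since $s\ge 2/\nu$), so it wraps around~$\T$ several times with spacing $\frac{s}{q}\le\frac{6}{\nu q}\le\frac{6}{\nu L}\ll\nu$. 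It is exactly here---ensuring $q\ge L$ is large enough that the relevant intervals contain integers---that hypothesis~\eqref{ineq:condellL} enters. Your argument never invokes~\eqref{ineq:condellL} in any substantive way, which is a symptom that the proof cannot close as written.
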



Taking Lemma~\ref{lem:easydense} for granted, we now show how to
choose~$\ell$ so as to make $\cV(\bar r,\nu) \times M_2$ 
$\big(3\nu,\tau,\tau^{b}\big)$-diffusive for~$T$
with $\tau \defeq E_{3c\ga,\ga}(\nu)$.

Given arbitrary $z\in \cV(\bar r,\nu) \times M_2$,
Lemma~\ref{lem:easydense} yields~$n$ and~$s$ integer such that
$\hz \defeq \big( F^s(\sz), \ssz{n,s} \big) = T^{nq+s}\big( \sz, \ssz{0,0} \big)$ is $3\nu$-close
to~$z$.
For any $m\ge1$, comparing the last coordinates of~$\hz$
and $T^{mq}(\hz) = \big( F^s(\sz), \ssz{n+m,s} \big)$,
namely $-\om_2+\frac{n}{q}$ and $-\om_2+\frac{n+m}{q}$, we see that
$d(T^{mq}(\hz), \hz) \ge m/q$, hence
\beglab{ineq:Tqhzell}
d(T^{q^3}(\hz), z) \ge q - 3\nu
\quad\text{with}\ens
\ell \le q \le \frac{3\ell}{\nu}.
\edla

Let $\mu \defeq \ex^{c\nu^{-\ga}} = \sig^{-1/2}$.
The number~$L$ of~\eqref{ineq:condellL} is
$2 K \mu\, \ex^{c_2 \mu^{2\ga}} \le \mu^2 \ex^{c_2 \mu^{2\ga}} 
< \ex^{(c_2 + \frac{1}{\ga})\mu^{2\ga}}$ and
$(c_2 + \frac{1}{\ga})\mu^{2\ga} \le b \mu^{3\ga}$
provided~$\nu$ is small enough, and then
\begla
L < \ex^{b\, \ex^{C \nu^{-\ga}}}
\ens \text{with} \ens
C \defeq 3 c \ga.
\edla
Since $b < \tiers$, we can satisfy~\eqref{ineq:condellL} by choosing $\ell \defeq
\flo{\frac{\nu}{3} \ex^{\tiers \ex^{C \nu^{-\ga}}}}$
and we then have
$q^3 \le (3\ell/\nu)^3 \le \ex^{\ex^{C \nu^{-\ga}}} = E_{C,\ga}(\nu)$.
%

On the other hand, $q-3\nu \ge \ell - 3\nu \ge
\ex^{b\,\ex^{C\nu^{-\ga}}} = E_{C,\ga}(\nu)^b$ for~$\nu$ small enough.
%
%
We thus get property~(3) of Proposition~\ref{prop.diffusion.discrete} from~\eqref{ineq:Tqhzell}.
The proof of that Proposition is thus complete up to the proof of
Lemma~\ref{lem:easydense}.


\subsection{Proof of Lemma~\ref{lem:easydense}}
 

We keep the notations ans assumptions of Section~\ref{sec:pfPropDiffDiscr} and give
ourselves an arbitrary $z = (z_1,z_2) \in \cV(\bar r,\nu) \times M_2$.
We look for integers~$n$ and~$s$ such that $d\big( \big( F^s(\sz), \ssz{n,s} \big), (z_1,z_2) \big)
\le 3\nu$ and $\tfrac{2}{\nu} \le s \le \tfrac{6}{\nu}$.

By property~(4) of Proposition~\ref{prop.isolation}, we can choose the
integer~$s$ so that
\[
d\big( F^s(\sz), z_1 \big) \le 2\nu,
\qquad \tfrac{2}{\nu} \le s \le \tfrac{6}{\nu}.
\]
On the other hand, writing $z_2 = (\th_2+\Z,r_2)$ with $\th_2,r_2 \in
\Z$, we see from~\eqref{eq:fullorbpsi} that the last coordinate
of~$\ssz{n,s}$ will be $\nu$-close to~$r_2$ if and only if $\absD{n-q(\om_2+r_2)}
\le \nu q$.
Let us denote by~$n_*$ the integer nearest to $q(\om_2+r_2)$, thus
$\absD{n_*-q(\om_2+r_2)} \le \dem$.
To conclude, it is sufficient to take~$n$ of the form $n=n_*+m$
with~$m$ integer such that
\beglab{ineq:final}
\absD{m} \le \nu q-\dem
\quad\text{and}\quad
\Dist\bigg( \frac{s(n_*+m)}{q}, \th_2 + \Z \bigg) \le \nu.
\edla
Indeed, we will then have
$d\big( \big( F^s(\sz), \ssz{n,s} \big), (z_1,z_2) \big)
\le \sqrt{4\nu^2 + \nu^2 + \nu^2} < 3\nu$.


The second part of~\eqref{ineq:final} is equivalent to
$\dst \Dist\Big( m, \frac{q\th_2}{s} - n_* + \frac{q}{s} \Z \Big) \le
\frac{\nu q}{s}$.
Let \vspace{-1ex}
\[
I \defeq \Big[ -(\nu q-\dem),\nu q-\dem \Big],
\qquad
J \defeq \Big[ x_2 - \frac{\nu q}{s}, x_2 + \frac{\nu q}{s} \Big]
\quad \text{with} \ens
x_2 = \frac{q\th_2}{s} - n_*.
\]
The whole of~\eqref{ineq:final} is thus equivalent to
\beglab{eq:form}
m \in I 
\quad \text{and} \quad
m \in \frac{kq}{s} + J
\ens\text{for some $k\in\Z$.}
\edla
Now, $\frac{kq}{s} + J \subset I$ is equivalent to 
\beglab{inclusJI}
\absD{I} \ge \absD{J}
\quad \text{and} \quad
-\dem\big(\absD{I} - \absD{J}\big)
\le \frac{kq}{s} + x_2 \le
\dem\big(\absD{I} - \absD{J}\big).
\edla
Since $\De \defeq \absD{I}-\absD{J} = 2\nu q - 1 - 2\nu q/s$,
\eqref{inclusJI} amounts to
$\De\ge0$ and~$k$ belonging to the interval $\big[ -\frac{s x_2}{q}-\frac{s\De}{2q},
-\frac{s x_2}{q}+\frac{s\De}{2q} \big]$,
which has length $s\De/q = 2\nu s - 2\nu - s/q \ge 4-2\nu-6/(\nu L)$
(using $q\ge L$).
That length is $\ge1$ for~$\nu$ small enough and the interval then
contains at least one integer~$k_*$.
Diminishing~$\nu$ if necessary, we then have 
$\absD{\frac{k_* q}{s} + J} = 2\nu q/s \ge \nu^2 L/3 \ge 1$,
hence we can find $m \in (\frac{k_*q}{s} + J)\cap \Z \subset I\cap \Z$,
thus solution to~\eqref{eq:form} or, equivalently, to~\eqref{ineq:final}.



 \section{Continuous time}
 \label{sec.flows}
 

In Sections~\ref{sec:mainmldbr}--\ref{secSynchrDiffH}, we have proved
Theorems~\ref{theorem.onetorus.discrete}
and~\ref{theorem.manytori.discrete},
providing examples of discrete systems of the form $\Phi^v \circ
\Phi^u \circ \Phi^{h_0} \col \T^n\times\R^n \righttoleftarrow$ with diffusive invariant tori
(using Remark~\ref{rem:discretemultidim}).
We will now deduce Theorems~\ref{theorem.onetorus}
and~\ref{theorem.manytori} by a ``suspension'' device adapted from \cite{hms}.


\begin{definition}
  Given an exact symplectic map
  $T \col \T^n\times\R^n \righttoleftarrow$, we call suspension of~$T$
  any non-autonomous Hamiltonian which depends
  $1$-periodically on time
\[ h \col \T^n\times\R^n \times \T \to \R, \]
for which the flow map between the times $t=0$ and $t=1$ exists and coincides with~$T$.
\end{definition}


\begin{lemma}    \label{lem:explicitsusp}
Let $h_0 \col r\in\R^n \mapsto (\om,r) + \dem (r,r)$ as in Section~\ref{sec:HamFlows}.
Suppose that~$u$ and~$v$ are~$C^\infty$ functions on $\T^n\times\R^n$
which generate complete Hamiltonian vector fields.
Let $\psi,\chi\in C^\infty([0,1])$ be such that
\beglab{cond:psichi}
\int_0^1\psi(t)\,\dd t = \int_0^1 \chi(t)\,\dd t = 1, \quad
\supp(\psi) \subset \big[\tiers,\dtiers\big], \quad
\supp(\chi) \subset \big[\dtiers,1\big].
\edla
Then the formula
\beglab{eq:explicitsusp}
h(\th,r,t) \defeq h_0(r) 
+ \psi(t) u\big( \th + (1-t)(\om+r) + \Z^n, r\big)
+ \chi(t) v\big( \th + (1-t)(\om+r) + \Z^n, r\big)
\edla
defines a function on $\T^n\times\R^n\times[0,1]$ which uniquely extends by
periodicity to a $C^\infty$ function on $\T^n\times\R^n\times\T$
and, when viewed as a non-autonomous time-periodic Hamiltonian,
is a suspension of $\Phi^v \circ \Phi^u \circ \Phi^{h_0}$.
\end{lemma}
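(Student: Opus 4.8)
The plan is to verify that the non-autonomous Hamiltonian defined by~\eqref{eq:explicitsusp} has the claimed smoothness and that its time-$0$-to-time-$1$ flow splits as the composition $\Phi^v \circ \Phi^u \circ \Phi^{h_0}$, by exploiting the fact that the three summands in~\eqref{eq:explicitsusp} are supported on disjoint subintervals of $[0,1]$ in the time variable and that each is a time-dependent conjugate of a time-independent Hamiltonian. First I would address the smoothness and periodicity claim: the map $(\th,r,t)\mapsto(\th+(1-t)(\om+r)+\Z^n,r)$ is $C^\infty$ from $\T^n\times\R^n\times\R$ to $\T^n\times\R^n$, so each composition $u\big(\th+(1-t)(\om+r)+\Z^n,r\big)$ is $C^\infty$; multiplying by $\psi$ (resp.\ $\chi$), which vanishes near $t=0$ and $t=1$ together with all derivatives by~\eqref{cond:psichi}, shows that $h-h_0$ vanishes to infinite order at $t=0$ and $t=1$, hence extends $1$-periodically in~$t$ to a $C^\infty$ function on $\T^n\times\R^n\times\T$. (I would also note, for use in the Gevrey version, that this construction preserves Gevrey-$(\al,L)$ regularity up to a controlled loss, relying on the composition estimates in the appendix, but for the present lemma only $C^\infty$ is asserted.)

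The core of the argument is the flow computation. Write $\phi_0^t$ for the time-$0$-to-time-$t$ flow of the autonomous Hamiltonian $h_0$, so that $\phi_0^t(\th,r)=(\th+t(\om+r)+\Z^n,r)$ and $\Phi^{h_0}=\phi_0^1$. On the interval $[0,\tiers]$ both $\psi$ and $\chi$ vanish, so the flow of~$h$ coincides with $\phi_0^t$; at time $t=\tiers$ the flow has reached $\phi_0^{1/3}$. On $[\tiers,\dtiers]$ the Hamiltonian is $h_0+\psi(t)\,u\circ(\,\cdot\, \text{shift by }(1-t)(\om+r))$; here the key point is that the time-dependent shift $(1-t)(\om+r)$ is precisely chosen so that conjugating by $\phi_0^{1-t}$ turns the $u$-term back into the autonomous $u$. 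Concretely, I would change variables along the flow by setting $(\th',r')=\phi_0^{t-1}(\th,r)$ — equivalently, work in the frame co-moving with the $h_0$-flow — and check that in these coordinates the equations of motion for the nonautonomous piece become $\dot\th'=\psi(t)\,\partial_r u(\th',r')$, $\dot r'=-\psi(t)\,\partial_\th u(\th',r')$, i.e.\ a time-reparametrized autonomous $u$-flow; since $\int_0^1\psi=1$ and $\supp\psi\subset[\tiers,\dtiers]$, integrating from $t=\tiers$ to $t=\dtiers$ produces exactly $\Phi^u$ in the co-moving frame. Translating back, the flow at time $t=\dtiers$ equals $\phi_0^{2/3}\circ\Phi^u\circ\phi_0^{-1/3}$ composed appropriately — more cleanly, the time-$\tiers$-to-time-$\dtiers$ map of~$h$ equals $\phi_0^{1/3}\circ\Phi^u\circ(\phi_0^{1/3})\ii$ followed by nothing, so after the first subinterval the composite flow from $t=0$ to $t=\dtiers$ is $\phi_0^{1/3}\circ\Phi^u\circ(\phi_0^{1/3})\ii\circ\phi_0^{1/3}=\phi_0^{1/3}\circ\Phi^u$. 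The same computation on $[\dtiers,1]$ with $\chi$ in place of $\psi$ yields a further factor, giving the time-$0$-to-time-$1$ map $\phi_0^{1/3}\circ\Phi^v\circ\Phi^u$ conjugated so as to produce, after the final $h_0$-drift on no remaining subinterval, exactly $\Phi^v\circ\Phi^u\circ\phi_0^1=\Phi^v\circ\Phi^u\circ\Phi^{h_0}$.

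The main obstacle, and the step requiring the most care, is getting the conjugation bookkeeping exactly right: one must check that the particular choice of shift $(1-t)(\om+r)$ (rather than, say, $t(\om+r)$ or $(\om+r)$) is the one that makes the nonautonomous $u$- and $v$-pieces conjugate, via the $h_0$-flow, to genuinely autonomous Hamiltonians with the correct time ordering, so that the composition comes out as $\Phi^v\circ\Phi^u\circ\Phi^{h_0}$ and not in some other order or with spurious $h_0$-factors inserted between $\Phi^u$ and $\Phi^v$. This is precisely where the disjointness of $\supp(\psi)$ and $\supp(\chi)$ is essential: because the $u$-perturbation is switched off before the $\chi$-perturbation switches on, there is no interval on which both act, so the two conjugations can be carried out independently and the resulting factors $\Phi^u$ and $\Phi^v$ appear consecutively with only the pure $h_0$-drift on $[0,\tiers]$ surviving as $\Phi^{h_0}$. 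Once this identification is checked on one period $[0,1]$, completeness of the vector field generated by $h$ follows from completeness of the vector fields of $h_0$, $u$ and $v$ together with the $1$-periodicity, and the lemma is proved.
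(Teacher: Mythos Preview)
Your strategy --- pass to a frame co-moving with the $h_0$-flow so that on each subinterval the perturbation becomes a time-reparametrised autonomous Hamiltonian --- is exactly the paper's, but your conjugation bookkeeping is wrong as written. With the correct change $(\th^*,r^*)=\phi_0^{1-t}(\th,r)$ (you wrote $\phi_0^{t-1}$, which does not undo the shift $(1-t)(\om+r)$), one checks directly that $\tfrac{\dd z^*}{\dd t}=0$ on $[0,\tiers]$, $\tfrac{\dd z^*}{\dd t}=\psi(t)X_u(z^*)$ on $[\tiers,\dtiers]$, and $\tfrac{\dd z^*}{\dd t}=\chi(t)X_v(z^*)$ on $[\dtiers,1]$; since $z^*(0)=\phi_0^{1}(z_0)=\Phi^{h_0}(z_0)$ while $z^*(1)=z(1)$, this gives $z(1)=\Phi^v\circ\Phi^u\circ\Phi^{h_0}(z_0)$ in three lines. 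Your intermediate claims --- that the time-$\tiers$-to-$\dtiers$ map is $\phi_0^{1/3}\circ\Phi^u\circ(\phi_0^{1/3})^{-1}$ and that the $0$-to-$\dtiers$ map is $\phi_0^{1/3}\circ\Phi^u$ --- are incorrect (they are $\phi_0^{-1/3}\circ\Phi^u\circ\phi_0^{2/3}$ and $\phi_0^{-1/3}\circ\Phi^u\circ\Phi^{h_0}$, respectively), and your last sentence then slides from a wrong intermediate formula to the right final answer without justification.

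The paper avoids this endpoint juggling by introducing an auxiliary $\ph\in C^\infty([0,1])$ supported in $[0,\tiers]$ with $\int_0^1\ph=1$, setting $\ti\ph(t)\defeq\int_0^t(\ph-1)$, and using the single change $\th^*=\th+\ti\ph(t)(\om+r)$ on all of $[0,1]$. Then $\ti\ph$ is $1$-periodic (which simultaneously proves the $C^\infty$ extension to $\T^n\times\R^n\times\T$ via~\eqref{eq:varexplicitsusp}), and $\ti\ph(0)=\ti\ph(1)=0$ so $z^*$ agrees with~$z$ at both endpoints; on the three successive subintervals $\tfrac{\dd z^*}{\dd t}$ equals $\ph(t)X_{h_0}(z^*)$, $\psi(t)X_u(z^*)$, $\chi(t)X_v(z^*)$, yielding the composition $\Phi^v\circ\Phi^u\circ\Phi^{h_0}$ with no intermediate frame-switching.
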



\begin{proof}
Let $\ph\in C^\infty([0,1])$ have support $\subset [0,\tiers]$ and
$\int_0^1 \ph(t)\,\dd t=1$.
We observe that~\eqref{eq:explicitsusp} entails,
for all $(\th,r,t) \in \T^n\times\R^n\times[0,1]$,
\beglab{eq:varexplicitsusp}
h(\th,r,t) = h_0(r) 
+ \psi(t) u\big( \th + \ti\ph(t)(\om+r) + \Z^n, r\big)
+ \chi(t) v\big( \th + \ti\ph(t)(\om+r) + \Z^n, r\big),
\edla
where $\ti\ph(t) \defeq \int_0^t \big(\ph(t')-1\big)\dd t'$ extends
to a $1$-periodic $C^\infty$ function.
This takes care of the first statement.

Let $z_0\in\T^n\times\R^n$ and let $z(t)=\big(\th(t),r(t)\big)$ denote the maximal solution of the initial
value problem $\tfrac{\dd z}{\dd t} = X_h(z,t)$, $z(0)=z_0$.
Defining $\th^*(t) \defeq \th(t) + \ti\ph(t)\big(\om+r(t)\big)$ and
$z^*(t)\defeq \big(\th^*(t),r(t)\big)$,
we compute
\begin{align*}
&&&&&&&&&&\tfrac{\dd z^*}{\dd t\hspace{.25em}}(t) &= \ph(t) X_{h_0}\big(z^*(t)\big) 
&&&\text{for $t\in\big[0,\tiers\big]$,}&&&&&&&&&&\\[1ex]
&&&&&&&&&& \tfrac{\dd z^*}{\dd t\hspace{.25em}}(t) &= \psi(t) X_{u}\big(z^*(t)\big) 
&&&\text{for $t\in\big[\tiers,\dtiers\big]$,}&&&&&&&&&&\\[1ex]
&&&&&&&&&& \tfrac{\dd z^*}{\dd t\hspace{.25em}}(t) &= \chi(t) X_{v}\big(z^*(t)\big) 
&&&\text{for $t\in\big[\dtiers,1\big]$.}&&&&&&&&&&
\end{align*}
The flow map of~$X_h$ between the times $t=0$ and $t=\tiers$ is thus a
reparametrization of the flow of~$X_{h_0}$:
$t\in \big[0,\tiers\big] \IMP 
z^*(t) = \Phi^{h_0}\big( \int_0^t\ph(t')\,\dd t' \big)$,
whence $z^*\big(\tiers\big) = \Phi^{h_0}(z_0)$ since
$\int_0^{1/3}\ph(t')\,\dd t' = 1$.
Similarly,
$z^*\big(\dtiers\big) = \Phi^{u}\big(z^*\big(\tiers\big)\big)$
since $\int_{1/3}^{2/3}\psi(t')\,\dd t' = 1$,
and $z^*(1) = \Phi^{v}\big(z^*\big(\dtiers\big)\big)$,
since $\int_{2/3}^{1}\chi(t')\,\dd t' = 1$.
We thus get $z^*(1) = \Phi^v \circ \Phi^u \circ \Phi^{h_0}(z_0)$,
which yields the desired result because $z^*(1)=z(1)$.
\end{proof}


Notice that, if $T = \Phi^v \circ \Phi^u \circ \Phi^{h_0}$ satisfies
properties~(1) and~(3) of Theorem~\ref{theorem.onetorus.discrete}, \resp
Theorem~\ref{theorem.manytori.discrete},
then \textit{any suspension of~$T$ of the form~\eqref{eq:varexplicitsusp} satisfies
property~(2) of Theorem~\ref{theorem.onetorus}, \resp
Theorem~\ref{theorem.manytori}}.
The invariance of the torus~$\cT_{(r,s)}$ with $r=0$, \resp
$r\in(X_\eps+\Z)\times\R^{n-1}$, stems from the vanishing of $\pa_\th
h$ and~$\pa_t h$ for $r_1=0$, \resp $r_1\in X_\eps+\Z$.


\begin{lemma}   \label{lem:composSom}
Consider the mapping
\[
\cS_\om \col (\th,r,t) \in \T^n\times \R^n \times \T \mapsto
\big( \th + (1-t)\om +\Z^n, r \big)
\in \T^n\times \R^n.
\]
Let $\al\ge1$ and $\La>0$ be real, and 
$\La_1 \ge \La \Big( 1 + \max\limits_{1\le i \le
  n}\absD{\om_i}^{1/\al} \Big)$.
Then
\[
w\in G^{\al,\La_1}(\T^n\times \R^n)
\IMP
w\circ\cS_\om \in G^{\al,\La}(\T^n\times \R^n\times \T)
\ens\text{and}\ens
\normD{w\circ\cS_\om}_{\al,\La}
\le \normD{w}_{\al,\La_1}.
\]
\end{lemma}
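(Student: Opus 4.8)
The plan is to reduce the statement to the composition properties of the Gevrey–$(\al,\La)$ norm with an affine change of variables, and to handle the loss in the constant $L$ caused by differentiating in the new time variable.

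First I would recall the structure of $\cS_\om$. Writing $\cS_\om(\th,r,t) = (\Theta(\th,t), r)$ with $\Theta(\th,t) = \th + (1-t)\om + \Z^n$, we see that $\cS_\om$ is the composition of the identity on the $r$–variables with an affine map in $(\th,t)$. The key point is that for a multi‑index $\ell = (\ell_\th,\ell_r,\ell_t)$ the chain rule gives
\[
\pa^\ell (w\circ\cS_\om)(\th,r,t) = \om^{\ell_t}\,(\pa_\th^{\ell_\th+\ell_t}\pa_r^{\ell_r} w)\big(\cS_\om(\th,r,t)\big),
\]
where $\om^{\ell_t}$ is shorthand for $\prod_i \om_i^{(\ell_t)_i}$ when one keeps track of which of the $n$ angles is being differentiated in $t$ (more precisely, $\pa_t$ acts as $-\sum_i \om_i\,\pa_{\th_i}$, so one expands accordingly). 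The absolute value is bounded by $\big(\max_i|\om_i|\big)^{|\ell_t|}$ times a sum of $\th$–derivatives of total order $|\ell_\th|+|\ell_t|$ of $w$, evaluated at a point of $\T^n\times\R^n$. Because $w\in G^{\al,\La_1}$, each such derivative is bounded by $\normD{w}_{\al,\La_1}\,\La_1^{-(|\ell_\th|+|\ell_t|+|\ell_r|)}\,(|\ell_\th|+|\ell_t|+|\ell_r|)!^\al$.

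Next I would assemble the Gevrey norm of $w\circ\cS_\om$. Multiplying the bound above by $\La^{|\ell|}/|\ell|!^\al$ and summing, the crucial inequality is
\[
\La^{|\ell_\th|+|\ell_r|+|\ell_t|}\,\big(\max_i|\om_i|\big)^{|\ell_t|}\,\frac{(|\ell_\th|+|\ell_t|+|\ell_r|)!^\al}{(|\ell_\th|+|\ell_r|+|\ell_t|)!^\al}\ \le\ \La_1^{|\ell_\th|+|\ell_t|+|\ell_r|},
\]
which, since the factorial ratio is $1$, reduces to $\La\big(1+\max_i|\om_i|^{1/\al}\big)\ge \La_1^{?}$—wait, one must be careful: the extra $|\ell_t|$ derivatives become $\th$–derivatives, so $w$ is differentiated a total of $|\ell_\th|+|\ell_t|$ times in $\th$ and $|\ell_r|$ times in $r$, i.e.\ $|\ell_\th|+|\ell_t|+|\ell_r|$ times in all, which matches $|\ell|$. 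Hence one needs
\[
\La^{|\ell|}\big(\max_i|\om_i|\big)^{|\ell_t|}\ \le\ \La_1^{|\ell|},
\]
and using $\big(\max_i|\om_i|\big)^{|\ell_t|}\le \big(1+\max_i|\om_i|^{1/\al}\big)^{\al|\ell_t|}\le \big(1+\max_i|\om_i|^{1/\al}\big)^{\al|\ell|}$ is too lossy; instead, distributing the factor across only the $|\ell_t|$ time–derivatives, one writes $\max_i|\om_i|\le\big(1+\max_i|\om_i|^{1/\al}\big)^\al$ per unit of $|\ell_t|$ and pays $\big(1+\max_i|\om_i|^{1/\al}\big)^{|\ell|}$ overall after bounding $|\ell_t|\le|\ell|$ only on the portion where it appears—this is exactly the content of the hypothesis $\La_1\ge\La\big(1+\max_i|\om_i|^{1/\al}\big)$, since then $\La\cdot\big(1+\max_i|\om_i|^{1/\al}\big)^{\alpha}\ge\La\max_i|\om_i|$ handles each time derivative while $\La_1\ge\La$ handles the angle and action derivatives; combining gives $\La_1^{|\ell|}\ge\La^{|\ell|}(\max_i|\om_i|)^{|\ell_t|}$ as required.

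Finally I would organize the estimate over all multi‑indices: the number of ways $\pa_t^{|\ell_t|}$ expands into the $\pa_{\th_i}$'s is absorbed because each such term already has the right sign and the total number of $\th$–derivatives is $|\ell_\th|+|\ell_t|$, and the sup over the base point only decreases the norm since $\cS_\om$ is surjective onto $\T^n\times\R^n$. Summing (or taking suprema, depending on which equivalent definition of $\normD{\cdot}_{\al,\La}$ is in force from Appendix~\ref{AppSubsecGev}) yields $\normD{w\circ\cS_\om}_{\al,\La}\le\normD{w}_{\al,\La_1}$, and the periodicity in $t$ is automatic since $\cS_\om(\th,r,t+1)=\cS_\om(\th+\om+\Z^n,r,t)$ and $w$ is $\Z^n$–periodic in~$\th$.

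The main obstacle is bookkeeping the combinatorics of $\pa_t=-\sum_i\om_i\pa_{\th_i}$: one must verify that expanding $\pa_t^{k}$ produces, after the triangle inequality, at most $\big(\max_i|\om_i|\big)^{k}$ times a sum of pure $\th$–derivatives of order $k$ whose Gevrey weights telescope correctly, so that no spurious binomial or factorial factors appear. Once this is checked, the choice of $\La_1$ is precisely what makes the geometric factors close up, and the rest is the routine algebra-of-norms argument already used elsewhere in the appendix.
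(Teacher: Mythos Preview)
Your overall strategy---compute derivatives via the chain rule using $\pa_t = -\sum_i \om_i \pa_{\th_i}$, then estimate the Gevrey norm directly---is the same as the paper's. But the execution has a real gap, precisely at the point you flag as ``the main obstacle'' and then leave unresolved.

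First, the Gevrey norm in Appendix~\ref{AppSubsecGev} uses $\La^{|\ell|\al}/\ell!^\al$ with $\ell! = \ell_1!\cdots\ell_N!$ (product of factorials), not $\La^{|\ell|}/|\ell|!^\al$. Here $\ell_t$ is a \emph{single} non-negative integer (there is only one time variable), so your notation $\om^{\ell_t}=\prod_i \om_i^{(\ell_t)_i}$ does not parse. With the correct conventions, expanding $\pa_t^s$ by the multinomial theorem gives
\[
\pa_\th^p\,\pa_r^q\,\pa_t^s(w\circ\cS_\om)
=(-1)^s\sum_{m\in\N^n,\ |m|=s}\frac{s!}{m!}\,\om^m\,(\pa_\th^{p+m}\pa_r^q w)\circ\cS_\om,
\]
so binomial/multinomial factors \emph{do} appear, contrary to your claim that ``no spurious binomial or factorial factors appear.'' Your displayed ``crucial inequality'' then has factorial ratio~$1$ only because you replaced $\ell!$ by $|\ell|!$; with the correct norm, the source weight $1/(p!^\al q!^\al s!^\al)$ must be converted into the target weight $1/(\ell!^\al q!^\al)$ with $\ell=p+m$, and this conversion is nontrivial.

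The paper's proof does exactly this bookkeeping: after bounding $|\om^m|\le\Om^{|m|}$ with $\Om=\max_i|\om_i|$, it regroups the sum over $(p,m)$ by $\ell=p+m$ and reduces everything to the estimate
\[
A_\ell \defeq \sum_{p+m=\ell}\frac{\Om^{|m|}}{p!^\al\,|m|!^\al}
\ \le\ \Big(\sum_{p+m=\ell}\frac{\Om^{|m|/\al}}{p!\,m!}\Big)^\al
=\frac{(1+\Om^{1/\al})^{|\ell|\al}}{\ell!^\al},
\]
using first $|m|!\ge m!$ and then the superadditivity $(\sum a_k^{1/\al})^\al\ge\sum a_k$ valid for $\al\ge1$. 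It is precisely this step that produces the factor $(1+\Om^{1/\al})^\al$ per unit of $|\ell|$ and explains the hypothesis $\La_1\ge\La(1+\Om^{1/\al})$. Your argument that $\Om\le(1+\Om^{1/\al})^\al$ handles each $t$-derivative is not enough by itself: it controls the size of $\om^m$ but not the factorial mismatch between $s!=|m|!$ and the $m!$ and $(p+m)!$ that appear after expansion and regrouping. Filling in the $A_\ell$ estimate (or an equivalent computation) is what is missing from your proposal.
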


\begin{proof}
One can check that, for every $(p,q,s)\in\N^n\times\N^n\times \N$,
\[
\pa_\th^p \pa_r^q \pa_t^s (w\circ \cS_\om) = 
(-1)^s \sum_{m\in\N^n \;\text{s.t.}\; \absD{m}=s} \om_1^{m_1} \cdots \om_n^{m_n}
(\pa_\th^{p+m} \pa_r^q w)\circ \cS_\om,
\]
whence, with the notation $\Om \defeq \max\limits_{1\le i \le
  n}\absD{\om_i}$,
\begin{align*}
\normD{w\circ \cS_\om}_{\al,\La} &\le \sum_{p,q,m \in \N^n}
\frac{ \Om^{\absD{m}} \La^{\absD{p+q+m}\al} }{ p!^\al q!^\al \absD{m}!^\al }
\normD{\pa_\th^{p+m} \pa_r^q w}_{C^0(\T^n\times\R^n)} \\[1ex]
& = \sum_{\ell,q \in \N^n} A_\ell
\frac{ \La^{\absD{\ell+q}\al }}{ q!^\al }
\normD{\pa_\th^\ell \pa_r^q w}_{C^0(\T^n\times\R^n)}
\quad \text{with}\ens 
A_\ell \defeq \sum\limits_{p,m\in\N^n \;\text{s.t.}\; p+m=\ell}
\frac{ \Om^{\absD{m}} }{ p!^\al \absD{m}!^\al }.
\end{align*}
Now, 
$A_\ell \le \sum\limits_{p+m=\ell} \frac{ \Om^{\absD{m}} }{ p!^\al m!^\al }
\le \Big(\sum\limits_{p+m=\ell} \frac{ \Om^{\absD{m}/\al} }{ p! m! }\Big)^\al
= \frac{1}{\ell!^\al} (1+\Om^{1/\al})^{\absD{\ell}\al}$,
hence
\[
\normD{w\circ \cS_\om}_{\al,\La} 
\le \sum_{\ell,q \in \N^n} 
\frac{ (1+\Om^{1/\al})^{\absD{\ell}\al} \La^{\absD{\ell+q}\al }}{ \ell!^\al q!^\al }
\normD{\pa_\th^\ell \pa_r^q w}_{C^0} 
\le \frac{ \La_1^{\absD{\ell+q}\al }}{ \ell!^\al q!^\al }
\normD{\pa_\th^\ell \pa_r^q w}_{C^0} 
= \normD{w}_{\al,L_1}.
\]
\end{proof}


\begin{lemma}   \label{lem:composRI}
Consider an interval $I\subset[0,1]$ and the mapping
\[
\cR_I \col (\th,r,t) \in \T^n\times \R^n \times I \mapsto
\big( \th + (1-t)r +\Z^n, r , t\big)
\in \T^n\times \R^n \times I.
\]
Let $\al\ge1$ and $\L>0$ be real, and 
$\La \ge L \max\big\{ 2^{3/\al}, 2^{1/\al} L \big\}$.
Then
\[
w\in G^{\al,\La}(\T^n\times \R^n\times I)
\IMP
w\circ\cR_I \in \kG(\T^n\times \R^n\times I)
\ens\text{and}\ens
d_{\al,L}(0,w\circ\cR_I)
\le \normD{w}_{\al,\La}.
\]
\end{lemma}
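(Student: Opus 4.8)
The plan is to follow the proof of Lemma~\ref{lem:composSom}: write out the chain rule for the derivatives of $w\circ\cR_I$, exploit the fact that $\cR_I$ is affine (in $\th$ and in $r$ for fixed $t$, and in $t$ for fixed $r$) so that only its first order partials are nonzero, and then collect terms by means of the same elementary combinatorial inequality used there. Writing $\cR_I(\th,r,t)=(\Theta,r,t)$ with $\Theta=\th+(1-t)r$, the only non-vanishing partials of the inner map are $\partial\Theta_k/\partial\th_k=1$, $\partial\Theta_k/\partial r_k=1-t$ and $\partial\Theta_k/\partial t=-r_k$, whence
\[
\pa_\th^{p}\pa_r^{q}\pa_t^{s}(w\circ\cR_I)
= \sum c\,(1-t)^{a}\!\!\prod_{i=1}^{b}(-r_{k_i})\;
\big(\pa_\th^{p'}\pa_r^{q'}\pa_t^{s'}w\big)\circ\cR_I,
\]
a finite sum over the ways of routing each derivative $\pa_{r_j}$ either to the $r_j$-slot or to the $\Theta_j$-slot of $w$ (picking up a factor $1-t$ in the latter case), and each derivative $\pa_t$ either to the $t$-slot or to one of the $\Theta_k$-slots (picking up a factor $-r_k$), the $\pa_{\th_j}$'s going unambiguously to the $\Theta_j$-slot; here $c$ is a multinomial coefficient and $p',q',s',a,b,k_i$ depend on the term.

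The decisive new feature, absent from Lemma~\ref{lem:composSom}, is that differentiating in $t$ brings in coefficients $r_k$, which are unbounded on $\R^n$; consequently $w\circ\cR_I$ need not be uniformly Gevrey on $\T^n\times\R^n\times I$ (already $w=\cos(2\pi\th_1)$ gives $w\circ\cR_I=\cos\!\big(2\pi(\th_1+(1-t)r_1)\big)$, whose $t$-derivatives grow with $|r|$ and are hence unbounded), which is exactly why the target is the Fr\'echet space $\kG(\T^n\times\R^n\times I)$ and the conclusion is phrased for the translation-invariant metric $d_{\al,L}$. I would therefore estimate $w\circ\cR_I$ ball by ball along the exhaustion of $\R^n$ that defines $\kG$ and $d_{\al,L}$ (see Appendix~\ref{AppSubsecGev}). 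On a ball $B$ of radius $R$ of that exhaustion, bound $|1-t|\le1$ and $|r_k|\le R$ in the formula above and reorganise the sum over routings by the inequality $\sum_{p+m=\ell}\frac{a^{|m|}}{p!^{\al}|m|!^{\al}}\le\frac{1}{\ell!^{\al}}(1+a^{1/\al})^{|\ell|\al}$ used in the proof of Lemma~\ref{lem:composSom}, applied with $a=1$ for the $\pa_r\to\Theta$ routings, with $a=R$ for the $\pa_t\to\Theta$ routings, together with the binomial bounds for the two splittings and the multinomial bound redistributing the factorial among the angle, action and time variables. This bounds the Gevrey norm of $w\circ\cR_I$ on $B$, computed with the ball-$B$ parameter built into $d_{\al,L}$, by $\normD{w}_{\al,\La}$ provided $\La$ dominates that parameter inflated by $2^{3/\al}$ and by $(1+R^{1/\al})$; since the definition of $d_{\al,L}$ shrinks the Gevrey parameter on larger balls at a rate calibrated to neutralise their radii, the requirement on $\La$ becomes radius-free, namely $\La\ge L\max\{2^{3/\al},2^{1/\al}L\}$ (the term $2^{1/\al}L$ covering the one place where the $r_k$-carrying contribution is still weighted by the base parameter). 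Summing the per-ball bounds against the geometric weights of the Fr\'echet metric — each summand being $\le\min\{1,\normD{w}_{\al,\La}\}$ — yields simultaneously $w\circ\cR_I\in\kG(\T^n\times\R^n\times I)$ and $d_{\al,L}(0,w\circ\cR_I)\le\normD{w}_{\al,\La}$.

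The step I expect to be the main obstacle is precisely the bookkeeping in the middle part: keeping track, across the angle, action and time derivatives, of which ones are routed through the shifted argument $\Theta$, of the $(1-t)$- and $r_k$-coefficients they pick up, and of the ensuing multinomial factors, and then matching the accumulated inflation of the Gevrey parameter against the radius-dependent degradation built into $d_{\al,L}$ so as to land on the clean hypothesis on $\La$ stated in the lemma. As a variant one can instead use the identity $T_{r_0}(w\circ\cR_I)=\big((T_{r_0}w)\circ\cR_I\big)\circ\cS_{r_0}$, with $T_{r_0}$ the translation by $r_0$ in the action variable and $\cS_{r_0}$ as in Lemma~\ref{lem:composSom}, to transfer the estimate on a ball of the exhaustion to a reference ball at the origin and invoke Lemma~\ref{lem:composSom} essentially as a black box; this trades the explicit combinatorics for a covering argument, at the price of the same parameter-bookkeeping at the end. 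Everything else is either a verbatim repetition of computations from the proof of Lemma~\ref{lem:composSom} or routine manipulation of the Fr\'echet metric.
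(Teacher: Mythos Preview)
Your overall strategy---estimate $w\circ\cR_I$ ball by ball along the exhaustion defining~$\kG$, and exploit that the product $L_j^\al R_j = 2L^\al$ stays bounded so that the hypothesis on~$\La$ becomes radius-free---is exactly what the paper does. The paper, however, does not redo the chain-rule combinatorics: it invokes an adaptation of \cite[Remark~A.1]{hms}, which handles composition with any map $(\th,r,t)\mapsto(\th+\phi(t)r,r,t)$ under a numerical condition on $\phi$, $L_j$, $R_j$, $\La$; then it simply checks that condition for $\phi(t)=1-t$ using $\normD{\phi}_{\al,L_j,I}=\normD{\phi}_{C^0}+L_j^\al$ and $L_j^\al R_j=2L^\al$, landing directly on $L^{2\al}+4L^\al\le\La^\al$.

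Your direct computation has one genuine gap. The claim that ``only first order partials of~$\cR_I$ are nonzero'' is false: writing $\Theta_k=\th_k+(1-t)r_k$, one has $\pa_{r_k}\pa_t\Theta_k=-1$. Hence the Fa\`a di Bruno expansion of $\pa_\th^p\pa_r^q\pa_t^s(w\circ\cR_I)$ contains, in addition to your ``routing'' terms, further terms in which some $\pa_{r_j}$ is paired with some $\pa_t$ to act jointly on a single $\Theta_j$-slot (each such pairing contributes a factor~$-1$ and lowers the order of differentiation on~$w$ by one). Concretely, already $\pa_t\pa_{r_j}(w\circ\cR_I)$ produces a term $-(\pa_{\Theta_j}w)\circ\cR_I$ carrying neither a $(1-t)$ nor an $r_k$ factor, which your formula misses. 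These extra terms are harmless for the final estimate (their coefficients are bounded by~$1$ and they only \emph{decrease} the number of derivatives landing on~$w$), so your approach can be repaired, but the displayed formula and the ``routing'' description need to account for the pairing. This is precisely the kind of bookkeeping that the paper sidesteps by citing \cite{hms}.
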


\begin{proof}
Let $L_j \defeq 2^{-\frac{j-1}{\al}} L$ and $R_j \defeq 2^j$ for each $j\in\N^*$, as in
Appendix~\ref{AppSubsecGev}. 
We also set $\cK_j \defeq \T^n\times \ov B_{R_j} \times I$,
so $\kG(\T^n\times \R^n\times I) = \bigcap_{j\ge1}
G^{\al,L_j}(\cK_j)$.

A simple adaptation of \cite[Remark~A.1]{hms} shows that, 
if $\phi \in G^{\al,L_j}(I)$ and
\beglab{ineq:LaRjLj}
L_j^\al + (R_j + L_j^\al) \normD{\phi}_{\al,L_j,I} - R_j \normD{\phi}_{C^0(I)} 
\le \La^\al,
\edla
then the composition with the mapping
\[
\cR \col (\th,r,t) \in \T^n\times \R^n \times I \mapsto
\big( \th + \phi(t)r +\Z^n, r , t\big)
\in \T^n\times \R^n \times I
\]
has the property
\beglab{ineq:wcircR}
w\in G^{\al,\La}(\cK_j)
\IMP
w\circ\cR \in G^{\al,L_j}(\cK_j)
\ens\text{and}\ens
\normD{w\circ\cR}_{\al,L_j,\cK_j}
\le \normD{w}_{\al,\La}.
\edla
Taking $\phi(t) \defeq 1-t$, since our interval~$I$ is $\subset
[0,1]$, we have
$\normD{\phi}_{C^0(I)} \le 1$ and
$\normD{\phi}_{\al,L_j,I} = \normD{\phi}_{C^0(I)} + L_j^\al$,
hence the \lhs\ of~\eqref{ineq:LaRjLj} equals
\[
L_j^\al + R_j L_j^\al + L_j^\al \big( \normD{\phi}_{C^0(I)} + L_j^\al \big)
\le L_j^\al ( L_j^\al + R_j + 2) 
\le L^{2\al} + 4 L^{\al} \le \dem \La^\al + \dem \La^\al
\]
and~\eqref{ineq:wcircR} allows us to conclude, in view
of~\eqref{eq:defdalL}.
\end{proof}


\subsubsection*{Proof of Theorems~\ref{theorem.onetorus}
and~\ref{theorem.manytori}}


In both cases, we are given $\al>1$, $L>0$ and $\eps>0$.
Let
\[
\La_1 \defeq  \La \Big( 1 + \max\limits_{1\le i \le
  n}\absD{\om_i}^{1/\al} \Big),
\quad
\La \defeq L \max\big\{ 2^{3/\al}, 2^{1/\al} L \big\}.
\]
Since $\al>1$, we can pick $\psi,\chi \in G^{\al,\La}(\T)$ satisfying~\eqref{cond:psichi}.

Let us apply the multidimensional version of
Theorem~\ref{theorem.onetorus.discrete} or
Theorem~\ref{theorem.manytori.discrete} (\cf
Remark~\ref{rem:discretemultidim}) with parameters~$\La_1$ instead
of~$L$
and
\[
\eps_1 \defeq \frac{\eps}{ \max\big\{1,
\normD{\psi}_{\al,\La}, \normD{\chi}_{\al,\La} \big\} }
\]
instead of~$\eps$.
We thus get $u,v\in G^{\al,\La_1}(\T^n\times\R^n)$ and, in the second
case, $X_{\eps_1} \subset [0,1]$,
such that $\normD{u}_{\al,\La_1} + \normD{v}_{\al,\La_1} < \eps_1$
and any suspension of $\Phi^v \circ \Phi^u \circ \Phi^{h_0}$ satisfies
property~(2) of Theorem~\ref{theorem.onetorus}, \resp
Theorem~\ref{theorem.manytori}.

By Lemma~\ref{lem:explicitsusp}, we can choose the suspension to be
\[
h \defeq h_0 + \ti u \circ \cR_{[\frac{1}{3},\frac{2}{3}]}
+
\ti v \circ \cR_{[\frac{2}{3},1]}
\vspace{-1.5ex}
\]
with
\[ \ti u(\th,r,t) \defeq \psi(t) (u \circ \cS_\om)(\th,r,t),
\quad
\ti v(\th,r,t) \defeq \chi(t) (v \circ \cS_\om)(\th,r,t).
\]
Lemma~\ref{lem:composSom} and~\eqref{ineqGevBanAlg} give
\[
\normD{\ti u}_{\al,\La} \le \normD{\psi}_{\al,\La} \normD{u}_{\al,\La_1}, 
\quad
\normD{\ti v}_{\al,\La} \le \normD{\chi}_{\al,\La} \normD{v}_{\al,\La_1}, 
\]
whence $\normD{\ti u}_{\al,\La} + \normD{\ti v}_{\al,\La} < \eps$.
Then, since the distance $d_{\al,L}$ is translation-invariant,
\[
d_{\al,L}(h_0,h) \le 
d_{\al,L}\big(0, \ti u \circ \cR_{[\frac{1}{3},\frac{2}{3}]}\big)
+ d_{\al,L}\big(0, \ti v \circ \cR_{[\frac{2}{3},1]} \big)
\le \normD{\ti u}_{\al,\La} + \normD{\ti v}_{\al,\La}
\]
by Lemma~\ref{lem:composRI} and we are done.


\begin{appendices}

\section{Gevrey estimates}\label{App:Gevrey}
\setcounter{thm}{-1}

We fix real numbers $\al\ge1$ and $L>0$. 


\subsection{Gevrey functions and Gevrey maps}   \label{AppSubsecGev}


Here we adapt definitions and facts taken from \cite{hms}, \cite{ms} and \cite{FMS}.


\subsubsection*{The Banach algebra $G^{\al,L}(\R^M\times K)$ of uniformly Gevrey-$(\al,L)$ functions}


Let $N\ge1$ be integer. We will deal with real functions of~$N$ variables
defined on $\R^M\times K$, where $M\ge0$ and $K\subset \R^{N-M}$ is a Cartesian
product of closed Euclidean balls and tori. 
%
We define the uniformly Gevrey-$(\al,L)$ functions on $\R^M\times K$ by
\begin{multline}
G^{\al,L}(\R^M\times K) \defeq \{ f\in C^\infty(\R^M\times K) \mid \normD{f}_{\al,L} <\infty \},
\\
\label{eq:defGalL}
\normD{f}_{\al,L} \defeq \sum_{\ell\in\N^{N}}
\frac{L^{\absD{\ell}\al}}{\ell !^\al} \normD{\pa^\ell f}_{C^0(\R^M\times K)}.
\end{multline}
We have used the standard notations
$\absD{\ell} = \ell_1+\cdots+\ell_{N}$, $\ell! = \ell_1!\ldots\ell_{N}!$,
$\pa^\ell = \pa_{x_1}^{\ell_1}\ldots\pa_{x_N}^{\ell_{N}}$, 
and
%
%
$ \N \defeq \{0,1,2,\ldots\}$.
%
%
The space $G^{\al,L}(\R^M\times K)$ turns out to be a Banach algebra,
with
\beglab{ineqGevBanAlg}
\normD{fg}_{\al,L} \le \normD{f}_{\al,L} \normD{g}_{\al,L}
\edla
for all~$f$ and~$g$,
%
and there are ``Cauchy-Gevrey inequalities'':
if $0 < L_0 < L$,
then 
%
\begin{equation}	\label{ineqGevCauch}
\sum_{m\in \N^N;\ |m|=p} \normD{\pa^m f}_{\al,L_0} \le 
\frac{p!^\al}{(L-L_0)^{p\al}} \normD{f}_{\al,L}
\quad \text{for all $p\in\N$.}
\end{equation}
%

When necessary, we use the notation $\normD{\,.\,}_{\al,L,\R^M\times K}$
instead of $\normD{\,.\,}_{\al,L}$ to keep track of the domain to
which the norm relates.


\subsubsection*{The metric space $\kG(\R^M\times K)$}


When $M\ge1$, instead of restricting ourselves to uniformly
Gevrey-$(\al,L)$ functions on $\R^M\times K$, we may cover the
factor~$\R^M$ by an increasing sequence of closed balls and consider a
Fr\'echet space accordingly.
For technical reasons, we choose the sequences
\[
L_j \defeq 2^{-\frac{j-1}{\al}} L, \quad
R_j \defeq 2^j
\qquad \text{for $j\in\N^*$,}
\vspace{-1.5ex}
\]
and set
\beglab{eq:defdalL}
\kG(\R^M\times K) \defeq \bigcap_{j\ge1} G^{\al,L_j}\big(\ov B_{R_j}\times K\big),
\quad
d_{\al,L}(f,g) \defeq \sum_{j\ge1} 2^{-j}
\min\big\{ 1,
\normD{g-f}_{\al,L_j,\ov B_{R_j}\times K}
\big\}.
\edla

Clearly, $G^{\al,L}(\R^M\times K) \subset \kG(\R^M\times K)$ but the
inclusion is strict,
and the larger space is a complete metric space for the
distance~$d_{\al,L}$.

This construction is needed in Section~\ref{sec.flows} only.
In the rest of this appendix, we focus on uniformly Gevrey functions and
maps on~$\R^N$ (with $M=N$ and no factor~$K$).


\subsubsection*{Composition with uniformly Gevrey-$(\al,L)$ maps}
For $N\ge1$ integer, we define
\begin{multline}
G^{\al,L}(\R^{N},\R^{N}) \defeq 
\{ F
\in C^\infty(\R^N,\R^N) \mid \normD{F}_{\al,L} <\infty \},
\\
\label{eq:defGalLB}
\normD{F}_{\al,L} \defeq \normD{F\cc1}_{\al,L} + \cdots + \normD{F\cc N}_{\al,L} .
\end{multline}
This is a Banach space.





We also define
\[ 
\cN^*_{\al,L}(f) \defeq 
\sum_{\ell\in\N^N \setminus \{0\}} \frac{L^{\absD{\ell}\al}}{\ell!^\al} 
\normD{\pa^\ell f}_{C^0(\R^N)},
\] 
so that
$\normD{f}_{\al,L} = \normD{f}_{C^0(\R^N)} + \cN^*_{\al,L}(f)$.


\begin{lemma}  \label{lemGevCompos}
Let $L_0\in(0,L)$. There exists $\epsc = \epsc(\al,L,L_0,N)$ such that,
for any $f\in G^{\al,L}(\R^N)$
and $F=(F\cc1,\ldots,F\cc N)\in G^{\al,L_0}(\R^N,\R^N)$, if
\[ 
\cN^*_{\al,L_0}(F\cc1), \ldots, \cN^*_{\al,L_0}(F\cc N) \le \epsc,
\]
then $f\circ(\Id+F) \in G^{\al,L_0}(\R^N)$ and 
$\normD{f\circ(\Id+F)}_{\al,L_0} \le \normD{f}_{\al,L}$.
\end{lemma}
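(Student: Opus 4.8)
The plan is to expand the partial derivatives of $g\defeq f\circ(\Id+F)$ by the multivariate Fa\`a di Bruno formula and to check that the Gevrey factorials absorb \emph{all} of the resulting combinatorics, the smallness of $\epsc$ entering only to sum a geometric series to a total $\le1$. Write $\Phi\defeq\Id+F$, so that $\pa^{e_i}\Phi\cc j=\delta_{ij}+\pa_i F\cc j$ (with $e_i\in\N^N$ the $i$-th unit multi-index) and $\pa^k\Phi\cc j=\pa^k F\cc j$ for $\absD k\ge2$, and fix an intermediate radius $L_1$ with $L_0<L_1<L$. It suffices to bound $\normD{g}_{\al,L_0}=\sum_{\ell\in\N^N}\frac{L_0^{\absD\ell\al}}{\ell!^\al}\normD{\pa^\ell g}_{C^0(\R^N)}$.

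First I would use, for every $\ell$, the prefactor-free set-partition form of Fa\`a di Bruno,
\[
\pa^\ell g=\sum_{\pi}\ \sum_{c\col\pi\to\{1,\dots,N\}}\ (\pa^{\mu(c)}f)\circ\Phi\ \cdot\ \prod_{B\in\pi}\pa^{\ell_B}\Phi\cc{c(B)},
\]
where $\pi$ runs over the set partitions of the $\absD\ell$ ``slots'' of $\ell$, each block $B$ carrying the sub-multi-index $\ell_B\ne0$ (so $\sum_B\ell_B=\ell$), and $\mu(c)\defeq\sum_{B\in\pi}e_{c(B)}$, $\absD{\mu(c)}=\#\pi$. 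Taking $C^0$ norms and using $\normD{(\pa^{\mu}f)\circ\Phi}_{C^0(\R^N)}\le\normD{\pa^{\mu}f}_{C^0(\R^N)}$, I would split each factor $\pa^{e_i}\Phi\cc j=\delta_{ij}+\pa_i F\cc j$ into its identity part and its $F$-part, thereby sorting the blocks into \emph{active} ones (to which a derivative $\pa^{m}F\cc i$ of a component of $F$, $\absD m\ge1$, is attached) and \emph{trivial} ones (size one, value $1$, merely transmitting one $\pa_j$ to $f$). The configuration with no active block contributes $\sum_\ell\frac{L_0^{\absD\ell\al}}{\ell!^\al}\normD{(\pa^\ell f)\circ\Phi}_{C^0}\le\normD{f}_{\al,L_0}=\normD{f}_{C^0(\R^N)}+\cN^*_{\al,L_0}(f)$; this is the main term.

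For the remaining configurations (at least one active block, say with multi-indices $m_1,\dots,m_k$ and colours $c_1,\dots,c_k$, the trivial blocks summing to $\mu_{\mathrm{triv}}$, so that $\ell=\mu_{\mathrm{triv}}+\sum_a m_a$ and $\mu\defeq\mu_{\mathrm{triv}}+\sum_a e_{c_a}$), I would redistribute the weight $\frac{L_0^{\absD\ell\al}}{\ell!^\al}$: since $\ell!\ge\mu_{\mathrm{triv}}!\prod_a m_a!$ and $L_0^{\absD\ell\al}=L_0^{\absD{\mu_{\mathrm{triv}}}\al}\prod_a L_0^{\absD{m_a}\al}$, this weight is $\le\frac{L_0^{\absD{\mu_{\mathrm{triv}}}\al}}{\mu_{\mathrm{triv}}!^\al}\prod_a\frac{L_0^{\absD{m_a}\al}}{m_a!^\al}$. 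Summing each active factor over its order $\absD m\ge1$ and colour $i$ produces at most $\sum_{i=1}^N\cN^*_{\al,L_0}(F\cc i)\le N\epsc$, while passing from the transmitted index $\mu_{\mathrm{triv}}$ to the index $\mu$ that hits $f$ costs at most $\absD\mu^{\al}$ per active block, which is absorbed by the ratio $L_0/L_1<1$ when one rewrites the $f$-weight in radius $L_1$ and resums against $\sum_\mu\frac{L_1^{\absD\mu\al}}{\mu!^\al}\normD{\pa^\mu f}_{C^0}\le\normD{f}_{C^0}+\cN^*_{\al,L}(f)$. Altogether the correction is $\le C(\al,L,L_0,N)\,\epsc\,\cN^*_{\al,L}(f)$, and since it involves only $\pa^\mu f$ with $\absD\mu\ge1$ whereas the main term already leaves the room $\big(1-(L_0/L)^\al\big)\cN^*_{\al,L}(f)$ to spare (because $\cN^*_{\al,L_0}(f)\le(L_0/L)^\al\,\cN^*_{\al,L}(f)$), taking $\epsc\le\big(1-(L_0/L)^\al\big)/C$ yields $\normD{g}_{\al,L_0}\le\normD{f}_{\al,L}$.

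The main obstacle is exactly this bookkeeping — obtaining the constant $1$, not merely a finite one. What makes it work is (i) the prefactor-free set-partition form of Fa\`a di Bruno together with $\ell!\ge\prod_B\ell_B!$, which lets the Gevrey factorials swallow the combinatorial coefficients; (ii) spending the ratio $L_0/L_1<1$ to control the number of ways of interleaving active and trivial blocks; and (iii) the fact that the ``no active block'' term already accounts for $\normD{f}_{\al,L}$ with coefficient $1$ and strictly positive slack on its positive-order part, so every configuration with an active block is a genuine $O(\epsc)$ perturbation, estimated via a Cauchy--Gevrey inequality of the type~\eqref{ineqGevCauch}. (One could instead replace the explicit combinatorics by an induction on $\absD\ell$ built on $\pa_j(f\circ\Phi)=\sum_i(\delta_{ij}+\pa_i F\cc j)\,(\pa_i f)\circ\Phi$, tracking a weighted norm of the polynomial in the derivatives of $F$ that multiplies each $(\pa^\mu f)\circ\Phi$; the Gevrey inputs are essentially the same.)
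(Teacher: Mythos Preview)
The paper does not prove this lemma: it simply cites Appendix~A of \cite{FMS}. Your Fa\`a di Bruno strategy is the standard route and is sound in outline, but there is a genuine combinatorial gap in the bookkeeping you sketch, and it bites precisely when $\al>1$.

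The problematic step is ``passing from $\mu_{\mathrm{triv}}$ to $\mu$ costs at most $\absD{\mu}^{\al}$ per active block, absorbed by $L_0/L_1<1$''. Write $k$ for the number of active blocks and $p=\absD{\mu}=\absD{\mu_{\mathrm{triv}}}+k$. Your conversion produces a factor $\absD{\mu}^{k\al}(L_0/L_1)^{\absD{\mu}\al}$. When $k$ is comparable to $\absD{\mu}$ (in particular $\mu_{\mathrm{triv}}=0$, $k=p$) this is $(p\,L_0/L_1)^{p\al}$, which diverges; the small factor $(N\epsc)^{k}$ does not save you because $\epsc$ is fixed while $p$ grows. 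Equivalently, your inequality $\ell!\ge \mu_{\mathrm{triv}}!\prod_a m_a!$ is too weak: it only recovers $\mu_{\mathrm{triv}}!^{\al}$ in the denominator, whereas the target $\normD{f}_{\al,L}$ carries $\mu!^{\al}$, and bridging that gap by $\absD{\mu}^{k\al}$ is what fails.

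The fix is a sharper multinomial inequality. For blocks of sizes $b_1,\dots,b_p\ge1$ summing to $n$ one has $n!\ge p!\,\prod_a b_a!$ (easy induction on $n-p$). Applying this with $p=\absD{\mu}$ to the $p$ blocks of the Fa\`a di Bruno partition (the $\absD{\mu_{\mathrm{triv}}}$ trivial singletons \emph{together with} the $k$ active blocks) gives, after raising to the power $\al-1$, the extra $p!^{\al-1}$ you need. In dimension $N=1$ the computation then collapses cleanly: one finds
\[
\normD{g}_{\al,L_0}\ \le\ \sum_{p\ge0}\frac{A_p}{p!^{\al}}\,\big(L_0^{\al}+\cN^*_{\al,L_0}(F)\big)^{p},
\qquad A_p\defeq\normD{\pa^{p}f}_{C^0},
\]
so $\epsc\le L^{\al}-L_0^{\al}$ already gives $\normD{g}_{\al,L_0}\le\normD{f}_{\al,L}$ with constant~$1$. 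For general $N$ one keeps the $\tfrac{1}{p!}$ coming from ordered versus unordered blocks and the multinomial $\tfrac{p!}{\mu!}$ coming from the colour sum; together they replace your $\mu_{\mathrm{triv}}!^{\al}$ by the needed $\mu!^{\al}$, and no intermediate radius $L_1$ is required. This is essentially how the argument is organised in \cite{FMS} (and earlier in \cite{hms}), via majorant series rather than an explicit trivial/active split.
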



\noindent The proof is in Appendix~A of \cite{FMS}.


\subsection{Comparison estimates for Gevrey flows}


In Section~\ref{secProofNi}, we use comparison estimates for the flows
of two nearby Gevrey Hamiltonian systems.
We prove them here, building upon some facts which are proved in \cite{FMS}
about the flows of Gevrey vector fields.


\begin{lemma}[General case]   \label{lemGenFlow}
Suppose that $0<L_0<L$ and $N\ge1$.
Then there exists $\epsf=\epsf(\al,L,L_0,N)>0$ such that, for every vector field $X \in
G^{\al,L}(\R^N,\R^N)$ with $\normD{X}_{\al,L} \le \epsf$,
%
%
the time-$1$ map $\Phi$ of the flow generated by~$X$ satisfies 
%
\begla
\normD{\Phi - \Id}_{\al,L_0} \le \normD{X}_{\al,L}
\edla
and, if we are given another vector field $\ti X \in
G^{\al,L}(\R^N,\R^N)$ with $\normD{\ti X}_{\al,L} \le \epsf$,
then its time-$1$ map~$\ti\Phi$ satisfies
\beglab{ineqFlows}
\normD{\ti\Phi-\Phi}_{\al,L_0} \le 2 \normD{\ti X-X}_{\al,L}.
\edla
\end{lemma}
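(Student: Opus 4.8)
The plan is to reduce both bounds to the Duhamel (integral) form of the flow, to the composition estimate of Lemma~\ref{lemGevCompos} and a Lipschitz counterpart of it, and to a Gronwall argument. Write $\Phi^s$, $\ti\Phi^s$ for the time-$s$ maps of~$X$, $\ti X$, so that $\Phi=\Phi^1$, $\ti\Phi=\ti\Phi^1$, and fix an auxiliary radius $L_1$ with $L_0<L_1<L$. First I would quote the facts about flows of Gevrey vector fields established in \cite{FMS}: there is a threshold depending only on $(\al,L,L_0,N)$ below which $\normD{X}_{\al,L}$ guarantees that the flow of~$X$ is defined on $[0,1]$, that $\Phi^s-\Id\in G^{\al,L_0}(\R^N,\R^N)$ for all $s\in[0,1]$, and that $\sup_{s\in[0,1]}\normD{\Phi^s-\Id}_{\al,L_0}\le C_0\normD{X}_{\al,L}$ for some $C_0=C_0(\al,L,L_0,N)$; the same for~$\ti X$. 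Then I would choose $\epsf$ below that threshold and small enough that $C_0\epsf\le\min\{\epsc(\al,L,L_0,N),\epsc(\al,L_1,L_0,N)\}$, with $\epsc$ as in Lemma~\ref{lemGevCompos}. This way, for every $s\in[0,1]$ and every component index~$i$, both $\cN^*_{\al,L_0}(\Phi^s\cc i-x_i)\le\epsc$ and $\cN^*_{\al,L_0}(\ti\Phi^s\cc i-x_i)\le\epsc$, so $\Phi^s-\Id$ and $\ti\Phi^s-\Id$ are admissible inner maps for Lemma~\ref{lemGevCompos}.

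For the first inequality I would use $\Phi-\Id=\int_0^1 X\circ\Phi^s\,\dd s$. Applying Lemma~\ref{lemGevCompos} componentwise with $f=X\cc i\in G^{\al,L}(\R^N)$ and inner map $\Phi^s=\Id+(\Phi^s-\Id)$ gives $\normD{X\cc i\circ\Phi^s}_{\al,L_0}\le\normD{X\cc i}_{\al,L}$, hence $\normD{X\circ\Phi^s}_{\al,L_0}\le\normD{X}_{\al,L}$ for each $s\in[0,1]$; integrating in~$s$ yields $\normD{\Phi-\Id}_{\al,L_0}\le\normD{X}_{\al,L}$ with constant exactly~$1$, which is precisely the reason for imposing smallness of~$X$.

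For the comparison estimate I would split, for $t\in[0,1]$,
\[
\ti\Phi^t-\Phi^t=\int_0^t\big(\ti X\circ\ti\Phi^s-\ti X\circ\Phi^s\big)\,\dd s+\int_0^t\big(\ti X-X\big)\circ\Phi^s\,\dd s.
\]
The second integral is bounded in $\normD{\,\cdot\,}_{\al,L_0}$ by $\normD{\ti X-X}_{\al,L}$ exactly as in the first estimate (Lemma~\ref{lemGevCompos} with $f=(\ti X-X)\cc i$). For the integrand of the first integral I would use a Lipschitz-in-the-inner-map estimate, which follows from Lemma~\ref{lemGevCompos}, the Cauchy--Gevrey inequality~\eqref{ineqGevCauch} and the Banach-algebra bound~\eqref{ineqGevBanAlg} through the identity
\[
f\circ(\Id+F)-f\circ(\Id+\ti F)=\sum_j(F\cc j-\ti F\cc j)\int_0^1(\pa_{x_j}f)\circ\big((1-\theta)(\Id+\ti F)+\theta(\Id+F)\big)\,\dd\theta.
\]
Namely, one bounds $\normD{\pa_{x_j}f}_{\al,L_1}\le(L-L_1)^{-\al}\normD{f}_{\al,L}$ by~\eqref{ineqGevCauch} (with $L_1$ as intermediate radius), composes $\pa_{x_j}f$ with the interpolating maps---whose deviations from~$\Id$ are convex combinations of admissible ones, hence still admissible---by Lemma~\ref{lemGevCompos}, and multiplies by $F\cc j-\ti F\cc j$ using~\eqref{ineqGevBanAlg}. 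With $F=\Phi^s-\Id$, $\ti F=\ti\Phi^s-\Id$ and $f=\ti X\cc i$ this gives $\normD{\ti X\cc i\circ\ti\Phi^s-\ti X\cc i\circ\Phi^s}_{\al,L_0}\le C_1\normD{\ti X\cc i}_{\al,L}\,\normD{\ti\Phi^s-\Phi^s}_{\al,L_0}$ with $C_1=C_1(\al,L,L_0,N)$. Summing over~$i$ and setting $\delta(t)\defeq\normD{\ti\Phi^t-\Phi^t}_{\al,L_0}$, I obtain $\delta(t)\le C_1\normD{\ti X}_{\al,L}\int_0^t\delta(s)\,\dd s+\normD{\ti X-X}_{\al,L}\le C_1\epsf\int_0^t\delta(s)\,\dd s+\normD{\ti X-X}_{\al,L}$; Gronwall's lemma then gives $\delta(1)\le\ex^{C_1\epsf}\normD{\ti X-X}_{\al,L}$, and a last shrinking of~$\epsf$ so that $\ex^{C_1\epsf}\le2$ yields $\normD{\ti\Phi-\Phi}_{\al,L_0}\le2\normD{\ti X-X}_{\al,L}$.

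The step I expect to need the most care is not any individual inequality but the uniform bookkeeping of thresholds: one must ensure that a \emph{single} $\epsf=\epsf(\al,L,L_0,N)$ simultaneously makes the flows of both $X$ and $\ti X$ complete on $[0,1]$ (via \cite{FMS}), keeps $\Phi^s-\Id$, $\ti\Phi^s-\Id$ and all their convex interpolants inside the admissibility region $\cN^*_{\al,L_0}(\,\cdot\,)\le\epsc$ of Lemma~\ref{lemGevCompos} for \emph{every} $s\in[0,1]$, and makes $\ex^{C_1\epsf}\le2$. Once the dependence of $\epsf$ on $(\al,L,L_0,N)$ is organized so that all of these hold at once, the two asserted bounds follow as above.
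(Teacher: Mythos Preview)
Your proof is correct, but it takes a different route from the paper's. The paper recalls from \cite{FMS} that $\xi^*(t)\defeq\Phi(t)-\Id$ is the unique fixed point of the functional $\cF(\xi)(t)=\int_0^t X\circ(\Id+\xi(\tau))\,\dd\tau$ on a closed ball~$\cB$ of $C^0\big([0,1],G^{\al,L_0}(\R^N,\R^N)\big)$, and that~$\cF$ is $K$-Lipschitz with $K=\max_{i,j}\normD{\pa_{x_j}X\cc i}_{\al,L'}\le\dem$ (via~\eqref{ineqGevCauch}). The first bound is then just $\xi^*\in\cB$. For the comparison, the paper uses the standard fixed-point perturbation trick: taking $\xi_0=\ti\xi^*$ (the fixed point of the analogous~$\ti\cF$) and iterating~$\cF$ from it gives $\normD{\xi^*-\ti\xi^*}\le 2\normD{\cF(\ti\xi^*)-\ti\cF(\ti\xi^*)}\le 2\normD{\ti X-X}_{\al,L}$, the last step being Lemma~\ref{lemGevCompos} applied to $(\ti X-X)\circ(\Id+\ti\xi^*)$. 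So the paper never writes the Duhamel splitting or invokes Gronwall; the Lipschitz-in-inner-map information is packaged entirely in the contraction constant~$K$. Your approach unpacks this into an explicit mean-value identity plus Gronwall, which is a bit longer and needs the extra bookkeeping you identify (admissibility of the convex interpolants, uniform thresholds), but has the advantage of being self-contained rather than relying on the fixed-point setup of~\cite{FMS}. The two arguments use the same ingredients (\eqref{ineqGevCauch}, Lemma~\ref{lemGevCompos}, \eqref{ineqGevBanAlg}) and yield the same constants.
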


\begin{proof}
The first part of the statement is exactly Part~(i) of Lemma~A.1 from
\cite{FMS}.
There, the flow $t\in[0,1] \mapsto \Phi(t)$ was obtained by considering
the functional $\xi \mapsto \cF(\xi)$ defined by
\[
\cF(\xi)(t) \defeq \int_0^t X\circ\big(\Id +
\xi(\tau)\big)\,\dd\tau.
\]
Using an auxiliary $L'\in(L_0,L)$ and Lemma~\ref{lemGevCompos}, it was
shown that, if $\normD{X}_{\al,L}\le\epsf$ small enough, then~$\cF$
maps into itself
\[
\cB \defeq \{\, \xi \in C^0\big( [0,1], G^{\al,L}(\R^N,\R^N) \big)
\mid \normD{\xi} \le \normD{X}_{\al,L} \,\}
\]
(which is a closed ball in a Banach
space) and has a unique fixed point, none other than
$\xi^*(t) \defeq \Phi(t)-\Id$.

In that proof, $\cF$ was shown to be $K$-Lipschitz, with
$K \defeq \max_{i,j} \normD{\pa_{x_j}X\cc i}_{\al,L'}$.
We can ensure $K\le\dem$ by diminishing~$\epsf$ if necessary and using~\eqref{ineqGevCauch}.
Then, for \textit{any} $\xi_0 \in \cB$, the fixed point~$\xi^*$ is the
limit of the sequence of iterates $(\cF^k(\xi_0))_{k\in\N}$ and
$\normD{\xi^*-\xi_0} \le 2 \normD{\cF(\xi_0)-\xi_0}$.

Now, suppose we also have $\normD{\ti X}_{\al,L} \le\epsf$.
The time-$t$ map of~$\ti X$ is thus $\ti\Phi(t) = \Id+\ti\xi^*(t)$, 
with~$\ti\xi^*$ fixed point of $\ti\cF \col \cB \righttoleftarrow$.
Lemma~\ref{lemGevCompos} yields
\[
\normD{\ti\cF(\xi) - \cF(\xi)} = \normD{
\int_0^t (\ti X-X)\circ\big(\Id + \xi(\tau)\big)\,\dd\tau
} \le \normD{\ti X-X}_{\al,L}
\quad\text{for any $\xi\in\cB$,}
\]
thus we can compare the fixed points~$\xi^*$ and~$\ti\xi^*$ by writing
the former as the limit of the sequence $(\cF^k(\xi_0))_{k\in\N}$ with
$\xi_0 \defeq \ti\xi^*$; we get
\[
\normD{\xi^* - \ti\xi^*} \le 2 \normD{\cF(\ti\xi^*)-\ti\xi^*}
= 2 \normD{\cF(\ti\xi^*)-\ti\cF(\ti\xi^*)} \le \normD{\ti X-X}_{\al,L},
\]
which yields the desired result.
\end{proof}


\begin{lemma}[Hamiltonian case]   \label{lemHamFlow}
Suppose that $0<L_0<L$ and $n\ge1$.
Then there exist $\epsH,C_0>0$ such that, for every $u\in
G^{\al,L}(\R^{2n})$ with $\normD{u}_{\al,L} \le \epsH$,
\begla
\normD{\Phi^u - \Id}_{\al,L_0} \le C_0 \normD{u}_{\al,L},
\edla
and, given another $\ti u \in
G^{\al,L}(\R^{2n})$ with $\normD{\ti u}_{\al,L} \le \epsH$,
\beglab{ineqHamFlows}
\normD{\Phi^{\ti u}-\Phi^u}_{\al,L_0} \le C_0 \normD{\ti u-u}_{\al,L}.
%
%
\edla
\end{lemma}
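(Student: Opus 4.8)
The plan is to deduce the Hamiltonian case directly from the general case, Lemma~\ref{lemGenFlow}, by estimating the Gevrey norm of the Hamiltonian vector field in terms of the Gevrey norm of its generating function and then absorbing the loss of analyticity width into an auxiliary radius.

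First I would fix an intermediate radius $L' \defeq \dem(L_0+L)$, so that $0 < L_0 < L' < L$. The Hamiltonian vector field generated by $u \in G^{\al,L}(\R^{2n})$ is $X_u = J\,\nabla u$ with $J$ the standard symplectic matrix, so each of its $2n$ components equals, up to sign, one of the first-order partial derivatives $\pa_{x_j} u$. Applying the Cauchy--Gevrey inequality~\eqref{ineqGevCauch} with $p=1$ and with $L'$ in the role of $L_0$ there, one obtains
\[
\normD{X_u}_{\al,L'} = \sum_{j=1}^{2n}\normD{\pa_{x_j} u}_{\al,L'} \le \frac{1}{(L-L')^\al}\,\normD{u}_{\al,L}.
\]
Writing $C' \defeq (L-L')^{-\al} = 2^\al (L-L_0)^{-\al}$, this reads $\normD{X_u}_{\al,L'}\le C'\normD{u}_{\al,L}$; and since $u\mapsto X_u$ is linear, $X_{\ti u}-X_u = X_{\ti u-u}$ likewise satisfies $\normD{X_{\ti u}-X_u}_{\al,L'}\le C'\normD{\ti u-u}_{\al,L}$.

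Next I would invoke Lemma~\ref{lemGenFlow} with $(L',L_0,2n)$ in place of $(L,L_0,N)$; it supplies a threshold $\epsf = \epsf(\al,L',L_0,2n)>0$. Setting $\epsH \defeq \epsf/C'$, the hypothesis $\normD{u}_{\al,L}\le\epsH$ forces $\normD{X_u}_{\al,L'}\le\epsf$, hence the flow of $X_u$ exists on $[0,1]$ and its time-$1$ map $\Phi^u$ satisfies $\normD{\Phi^u-\Id}_{\al,L_0}\le\normD{X_u}_{\al,L'}\le C'\normD{u}_{\al,L}$. Likewise, if $\normD{\ti u}_{\al,L}\le\epsH$ as well, \eqref{ineqFlows} gives $\normD{\Phi^{\ti u}-\Phi^u}_{\al,L_0}\le 2\normD{X_{\ti u}-X_u}_{\al,L'}\le 2C'\normD{\ti u-u}_{\al,L}$. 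Taking $C_0 \defeq 2C' = 2^{\al+1}(L-L_0)^{-\al}$ then proves both inequalities.

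There is essentially no analytic difficulty here; this is purely a reduction. The only thing to watch is the bookkeeping of radii and constants: $L'$ must be squeezed strictly between $L_0$ and $L$ so that both the Cauchy--Gevrey loss from $L$ down to $L'$ and the flow comparison of Lemma~\ref{lemGenFlow} from $L'$ down to $L_0$ are simultaneously available, and the smallness threshold $\epsH$ must be chosen below $\epsf/C'$ so as to feed Lemma~\ref{lemGenFlow} a vector field of sufficiently small Gevrey norm.
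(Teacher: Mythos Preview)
Your proof is correct and essentially identical to the paper's own argument: introduce the intermediate radius $L'=\dem(L_0+L)$, bound $\normD{X_u}_{\al,L'}$ via the Cauchy--Gevrey inequality~\eqref{ineqGevCauch} with $p=1$, and feed this into Lemma~\ref{lemGenFlow} applied with parameters $(L',L_0,2n)$. Your choices $\epsH=(L-L')^{\al}\epsf$ and $C_0=2(L-L')^{-\al}$ coincide with the paper's.
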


\begin{proof}
Let $L' \defeq (L_0+L')/2$.
Any $u \in G^{\al,L}(\R^{2n})$ generates a Hamiltonian vector
field~$X_u$ which, according to~\eqref{ineqGevCauch} with $p=1$,
satisfies
\[
\normD{X_u}_{\al,L'} =
\sum_{m\in \N^{2n};\ |m|=1} \normD{\pa^m u}_{\al,L'} \le 
(L-L')^{-\al} \normD{u}_{\al,L}.
\]
Similarly, $\normD{X_{\ti u} - X_u}_{\al,L'} \le (L-L')^{-\al}
\normD{\ti u - u}_{\al,L}$.
Thus, with $\epsH \defeq (L-L')^{\al} \epsf(\al,L',L_0,2n)$ and
$C_0 \defeq 2 (L-L')^{-\al}$,
%
%
we get
\[
\normD{u}_{\al,L}, \normD{\ti u}_{\al,L} \le \epsH
\IMP
\normD{\Phi^u - \Id}_{\al,L_0} \le \dem C_0 \normD{
u}_{\al,L}
\ens\text{and}\ens
\normD{\Phi^{\ti u} - \Phi^u}_{\al,L_0} \le C_0 \normD{\ti u - u}_{\al,L}.
\]
%

%
\end{proof}


\begin{cor}[Iteration of maps of the form $\Phi^v \circ \Phi^u \circ T_0$]   \label{lemCompFlows}
Suppose that $n\ge1$.
Then there exist $\epsd,C_1>0$ such that, for every $u,v,\ti u,\ti v\in
G^{\al,L}(\R^{2n})$ such that 
\beglab{ineqassumptuv}
\normD{u}_{\al,L} + \normD{v}_{\al,L} \le \epsd, \quad
\normD{\ti u}_{\al,L} + \normD{\ti v}_{\al,L} \le \epsd
\edla
and for every $z\in\R^{2n}$, 
the orbits of~$z$ under the maps 
$T\defeq \Phi^v \circ \Phi^u \circ T_0$ and
$\ti T\defeq \Phi^{\ti v} \circ \Phi^{\ti u} \circ T_0$ satisfy
\beglab{ineqDistortEstim}
\Dist\big(T^k(z),\ti T^k(z) \big) \le 3^k C_1 \big(
\normD{\ti u-u}_{\al,L} + \normD{\ti v-v}_{\al,L}
\big) 
\quad \text{for all $k \in \N$.}
\edla
\end{cor}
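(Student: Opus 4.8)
The plan is a Gronwall-type telescoping argument in the number~$k$ of iterates, resting on two uniform bounds valid once $\epsd$ is small enough: a Lipschitz bound $\LIP(T),\LIP(\ti T)\le 3$ on $\R^{2n}$, and a $C^0$ comparison
\[
\sup_{z\in\R^{2n}}\absD{\ti T(z)-T(z)}\le C_1\big(\normD{\ti u-u}_{\al,L}+\normD{\ti v-v}_{\al,L}\big).
\]
First I would fix an auxiliary $L_0\in(0,L)$, say $L_0\defeq L/2$, and recall that $T_0=\Phi^{h_0}$ is the affine twist $(\th,r)\mapsto(\th+\om+r,r)$ of~$\R^{2n}$, whose Lipschitz constant is a universal constant at most~$2$ (the vector~$\om$ enters only through a translation).

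For the Lipschitz bound I would invoke Lemma~\ref{lemHamFlow} with this~$L_0$: once $\normD{u}_{\al,L},\normD{v}_{\al,L}\le\epsd\le\epsH$, it gives $\normD{\Phi^u-\Id}_{\al,L_0},\normD{\Phi^v-\Id}_{\al,L_0}\le C_0\epsd$, and since each first-order partial derivative of a function has $C^0$-norm at most $L_0^{-\al}$ times its $\normD{\,.\,}_{\al,L_0}$-norm (directly from the definition of the Gevrey norm), this controls the $C^1$-distance of $\Phi^u$ and $\Phi^v$ to~$\Id$ by a constant depending only on $\al,L,n$ times~$\epsd$; hence $\LIP(\Phi^u),\LIP(\Phi^v)\le 1+O(\epsd)$. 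Shrinking~$\epsd$ I then force $\LIP(\Phi^v)\,\LIP(\Phi^u)\,\LIP(T_0)\le 3$, \ie $\LIP(T)\le 3$, and likewise $\LIP(\ti T)\le 3$; this is where the base~$3$ of~\eqref{ineqDistortEstim} comes from.

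For the $C^0$ comparison, since $T_0$ is a bijection of~$\R^{2n}$ one has $\sup_z\absD{\ti T(z)-T(z)}=\normD{\Phi^{\ti v}\circ\Phi^{\ti u}-\Phi^v\circ\Phi^u}_{C^0}$. Inserting the intermediate map $\Phi^v\circ\Phi^{\ti u}$, the first difference is at most $\normD{\Phi^{\ti v}-\Phi^v}_{C^0}\le\normD{\Phi^{\ti v}-\Phi^v}_{\al,L_0}\le C_0\normD{\ti v-v}_{\al,L}$ by Lemma~\ref{lemHamFlow} (using $\normD{\,.\,}_{C^0}\le\normD{\,.\,}_{\al,L_0}$), while the second is at most $\LIP(\Phi^v)\,\normD{\Phi^{\ti u}-\Phi^u}_{C^0}\le 3C_0\normD{\ti u-u}_{\al,L}$; this yields the displayed $C^0$ bound with $C_1\defeq 3C_0$.

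It then remains to run the telescoping recursion. Put $d_k\defeq\Dist(T^k(z),\ti T^k(z))$, so $d_0=0$. Writing $\ti T^{k+1}(z)-T^{k+1}(z)=\big(\ti T(\ti T^k z)-\ti T(T^k z)\big)+\big(\ti T(T^k z)-T(T^k z)\big)$ and applying the two bounds above gives $d_{k+1}\le 3\,d_k+\delta$ with $\delta\defeq C_1\big(\normD{\ti u-u}_{\al,L}+\normD{\ti v-v}_{\al,L}\big)$, whence $d_k\le\frac{3^k-1}{2}\,\delta<3^k\delta$, which is exactly~\eqref{ineqDistortEstim}. I do not expect any genuine obstacle here; the only point needing care is the constant bookkeeping, in particular keeping the per-step expansion factor $\LIP(\Phi^v)\LIP(\Phi^u)\LIP(T_0)$ below~$3$ — which is possible precisely because $\LIP(T_0)$ is a fixed constant below~$3$ while the two time-$1$ flow maps tend to~$\Id$ in $C^1$ as $\epsd\to0$ — together with choosing $\epsd\le\epsH$ so that Lemma~\ref{lemHamFlow} is applicable.
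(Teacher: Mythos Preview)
Your proposal is correct and follows essentially the same approach as the paper: both fix $L_0=L/2$, invoke Lemma~\ref{lemHamFlow} to make $\LIP(\Phi^u),\LIP(\Phi^v)$ close to~$1$ so that (with $\LIP T_0=2$) the per-step expansion is at most~$3$, bound $\normD{\ti T-T}_{C^0}$ via a triangle-inequality splitting and~\eqref{ineqHamFlows}, and then iterate the one-step inequality $d_{k+1}\le 3d_k+\eta$. The only cosmetic differences are that the paper carries two initial points $z,\ti z$ through the one-step estimate before specializing to $z=\ti z$, and its constant bookkeeping yields $C_1=\dem(3/2)^{1/2}C_0$ rather than your $3C_0$.
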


\begin{proof}
For any $u, v, \ti u, \ti v \in G^{\al,L}(\R^{2n})$ and $z,\ti z \in
\R^{2n}$, the maps
$T \defeq \Phi^v \circ \Phi^u \circ T_0$ and
$\ti T\defeq \Phi^{\ti v} \circ \Phi^{\ti u} \circ T_0$ satisfy
\begin{align*}
\Dist\big(T(z),\ti T(z)\big) & \le
\Dist\big( 
\Phi^{v} ( \Phi^{u} ( T_0 (z) )), \Phi^{v} ( \Phi^{u} ( T_0 (\ti z) ))
\big)
\\[1ex] & \quad + \Dist\big( 
\Phi^{v} ( \Phi^{u} ( T_0 (\ti z) )), \Phi^{v} ( \Phi^{\ti u} ( T_0 (\ti z) ))
\big)
\\[1ex] & \quad \quad + \Dist\big( 
\Phi^{v} ( \Phi^{\ti u} ( T_0 (\ti z) )), \Phi^{\ti v} ( \Phi^{\ti u} ( T_0 (\ti z) ))
\big)
\\[1ex] & \hspace{-5em} \le 
(\LIP\Phi^v)(\LIP\Phi^u)(\LIP T_0) \Dist(\ti z,z)
+ (\LIP\Phi^v) \normD{\Phi^{\ti u}-\Phi^u}_{C^0(\R^{2n})}
+ \normD{\Phi^{\ti v}-\Phi^v}_{C^0(\R^{2n})}.
\end{align*}
%
%
On the one hand, $\LIP T_0 = 2$.
On the other hand, for any $L_0>0$, the Lipschitz constant of a
map~$\Psi$ such that $\Psi-\ID \in G^{\al,L_0}(\R^{2n},\R^{2n})$ is bounded by
%
$1 + \LIP(\Psi-\ID) \le 1 + L_0^{-\al} \normD{\Psi-\ID}_{\al,L_0}$
(using the mean value inequality, \eqref{eq:defGalL}
and~\eqref{eq:defGalLB}).
Applying Lemma~\ref{lemHamFlow} with $L_0 = L/2$,
we can thus choose~$\epsd$ so that assumption~\eqref{ineqassumptuv} entails
\begin{gather*}
\LIP\Phi^u, \LIP\Phi^v \le 1+2^\al L^{-\al} C_0 \epsd \le (3/2)^{1/2}
\\[-1ex] \intertext{and} 
\normD{\Phi^{\ti u}-\Phi^u}_{\al,L_0} \le C_0 \normD{\ti u-u}_{\al,L},
\quad
\normD{\Phi^{\ti v}-\Phi^v}_{\al,L_0} \le C_0 \normD{\ti v-v}_{\al,L},
\end{gather*}
whence
$\Dist\big(T(z),\ti T(z)\big) \le 3 \Dist(\ti z,z) + \eta$
with $\eta \defeq (3/2)^{1/2} C_0 \big( \normD{\ti u-u}_{\al,L}
+ \normD{\ti v-v}_{\al,L} \big)$.
Iterating this, we get
$\Dist\big(T^k(z),\ti T^k(z)\big) \le 3^k ( \Dist(\ti z,z) + \dem\eta)
- \dem\eta$ for all $k\in\N$,
thus we can conclude by choosing $C_1 \defeq \dem (3/2)^{1/2} C_0$.
\end{proof}


\subsection{A Gevrey inversion result}   \label{AppSubsecInv}
In Section~\ref{secUseGevInv},
 we use the following
\begin{lemma}   \label{lemGevInv}
Suppose $L<L_1$. Then there exists $\epsi = \epsi(\al,L,L_1)$ such that, for
every $\eps \in G^{\al,L_1}(\R)$, if $\normD{\eps}_{\al,L_1} \le
\epsi$, then
$\ID+\eps$ is a diffeomorphism of~$\R$ and
\begin{align}
%
%
& (\ID+\eps)\ii = \ID+\ti\eps 
\quad\text{with}\quad
\normD{\ti\eps}_{\al,L} \le \normD{\eps}_{\al,L_1}, \\[1ex]
\label{ineqGevInv}
& \normD*{g\circ(\ID+\eps)\ii}_{\al,L} \le \normD{g}_{\al,L_1}
\quad \text{for any $g\in G^{\al,L_1}(\R)$.}
\end{align}
\end{lemma}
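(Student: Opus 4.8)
The plan is to solve the composition equation $\ti\eps = -\eps\circ(\ID+\ti\eps)$ — which is equivalent to $(\ID+\eps)\circ(\ID+\ti\eps)=\ID$ on $\R$ — by a Banach fixed point argument based on the composition estimate of Lemma~\ref{lemGevCompos}, and then to read off all three assertions from the solution. First I would fix an auxiliary radius $L'$ with $L<L'<L_1$ (say $L'\defeq\dem(L+L_1)$) and set $\epsi\defeq\min\{\epsc(\al,L_1,L,1),\ \epsc(\al,L',L,1),\ \dem(L_1-L')^\al\}$, where the $\epsc$'s are the thresholds of Lemma~\ref{lemGevCompos}. Writing $\rho\defeq\normD{\eps}_{\al,L_1}\le\epsi$, I would work on the closed ball $\cB\defeq\{\zeta\in G^{\al,L}(\R)\mid\normD{\zeta}_{\al,L}\le\rho\}$ with the operator $\cG\col\zeta\mapsto-\eps\circ(\ID+\zeta)$. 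That $\cG$ maps $\cB$ into itself is immediate: for $\zeta\in\cB$, $\cN^*_{\al,L}(\zeta)\le\normD{\zeta}_{\al,L}\le\rho\le\epsc(\al,L_1,L,1)$, so Lemma~\ref{lemGevCompos} gives $\cG(\zeta)\in G^{\al,L}(\R)$ with $\normD{\cG(\zeta)}_{\al,L}\le\normD{\eps}_{\al,L_1}=\rho$.

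The one genuine computation — and the step I expect to be the main obstacle — is the contraction estimate. For $\zeta_0,\zeta_1\in\cB$, writing $\zeta_t\defeq\zeta_0+t(\zeta_1-\zeta_0)\in\cB$, the mean value theorem gives, pointwise,
\[
\cG(\zeta_1)-\cG(\zeta_0) = -\Big(\int_0^1 \eps'\circ(\ID+\zeta_t)\,\dd t\Big)(\zeta_1-\zeta_0),
\]
the integral being understood as a Bochner integral in the Banach space $G^{\al,L}(\R)$ (its continuity in $t$ follows from a difference version of Lemma~\ref{lemGevCompos}, exactly as in the proof of Lemma~\ref{lemGenFlow}). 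The key point is the loss of one derivative: $\eps'$ is not controlled in $G^{\al,L_1}$, but by the Cauchy–Gevrey inequality \eqref{ineqGevCauch} (with $p=1$, between radii $L'$ and $L_1$) one has $\eps'\in G^{\al,L'}(\R)$ with $\normD{\eps'}_{\al,L'}\le(L_1-L')^{-\al}\rho$. Then Lemma~\ref{lemGevCompos} applied with the pair $(L',L)$ gives $\normD{\eps'\circ(\ID+\zeta_t)}_{\al,L}\le\normD{\eps'}_{\al,L'}$ for each $t$, hence the same bound for the integral, and the Banach-algebra inequality \eqref{ineqGevBanAlg} yields
\[
\normD{\cG(\zeta_1)-\cG(\zeta_0)}_{\al,L}\le (L_1-L')^{-\al}\rho\,\normD{\zeta_1-\zeta_0}_{\al,L}\le\dem\,\normD{\zeta_1-\zeta_0}_{\al,L}.
\]
The contraction mapping theorem then produces the unique fixed point $\ti\eps\in\cB$, with $\normD{\ti\eps}_{\al,L}\le\rho=\normD{\eps}_{\al,L_1}$, and the fixed-point equation reads $(\ID+\eps)\circ(\ID+\ti\eps)=\ID$.

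It then remains to package the three conclusions. For the diffeomorphism property, \eqref{ineqGevCauch} again gives $\normD{\eps'}_{C^0(\R)}\le\normD{\eps'}_{\al,L'}\le(L_1-L')^{-\al}\rho\le\dem<1$, so $(\ID+\eps)'=1+\eps'\ge\dem$; thus $\ID+\eps$ is strictly increasing, and it is proper since $\eps$ is bounded, hence a $C^\infty$ diffeomorphism of $\R$ onto itself, whose (unique, two-sided) inverse must coincide with the right inverse $\ID+\ti\eps$ just exhibited. The bound $\normD{\ti\eps}_{\al,L}\le\normD{\eps}_{\al,L_1}$ is the one obtained above (and also follows from \eqref{ineqGevInv} with $g=\eps$). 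Finally, for \eqref{ineqGevInv}: given $g\in G^{\al,L_1}(\R)$, one has $\cN^*_{\al,L}(\ti\eps)\le\normD{\ti\eps}_{\al,L}\le\rho\le\epsc(\al,L_1,L,1)$, so a last application of Lemma~\ref{lemGevCompos} gives $g\circ(\ID+\eps)\ii=g\circ(\ID+\ti\eps)\in G^{\al,L}(\R)$ with $\normD{g\circ(\ID+\eps)\ii}_{\al,L}\le\normD{g}_{\al,L_1}$. Apart from the derivative-loss estimate above, the only care needed is in choosing $\epsi$ and the auxiliary radius $L'$ so that the finitely many constants from Lemma~\ref{lemGevCompos} and \eqref{ineqGevCauch} fit together, which is why $L'$ is introduced at the very start.
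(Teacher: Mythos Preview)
Your proof is correct and follows essentially the same route as the paper's: the same auxiliary radius $L'=\dem(L+L_1)$, the same choice of~$\epsi$, the same contraction mapping $\zeta\mapsto-\eps\circ(\ID+\zeta)$ on the same ball~$\cB$, and the same use of the Cauchy--Gevrey inequality~\eqref{ineqGevCauch} to absorb the derivative loss via $\normD{\eps'}_{\al,L'}\le(L_1-L')^{-\al}\rho\le\dem$. The only difference is that you spell out the Lipschitz estimate via a Bochner-integral mean-value formula, whereas the paper simply asserts the Lipschitz constant $K=\normD{\eps'}_{\al,L'}$ citing~\eqref{ineqGevBanAlg} and the mean value inequality; both amount to the same computation.
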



\begin{proof}
Let $L' \defeq (L+L')/2$.
%
%
We use Lemma~\ref{lemGevCompos} and define
\[
\epsi \defeq \min\big\{ \dem(L_1-L')^\al, \epsc(\al,L',L,1), \epsc(\al,L_1,L,1) \big\}.
\]
%
Given $\eps \in G^{\al,L_1}(\R)$ such that
$\normD{\eps}_{\al,L_1} \le \epsi$,
the functional
\[
\cF \col f \in \cB \mapsto -\eps\circ(\ID+f),
\quad\text{where 
$\cB \defeq \{ f \in G^{\al,L}(\R) \mid \normD{f}_{\al,L}
\le\normD{\eps}_{\al,L_1} \}$,}
\]
is well defined (because $\normD{\eps}_{\al,L_1} \le
\epsc(\al,L',L,1)$ and $\eps\in G^{\al,L'}(\R)$), 
maps~$\cB$ into itself (we even have $\normD{\cF(f)} \le \normD{\eps}_{\al,L'}$), 
and is $K$-Lipschitz with
$K \defeq \normD{\eps'}_{\al,L'}$
(using also~\eqref{ineqGevBanAlg} and the mean value inequality).
But~\eqref{ineqGevCauch} yields 
$\normD{\eps'}_{\al,L'} \le 
(L_1-L')^{-\al} \normD{\eps}_{\al,L_1}
\le \dem$, 
which implies that~$\cF$ is a contraction, and also that $\ID+\eps$ is
a diffeomorphism of~$\R$ (since its derivative stays $\ge 1/2$).
The unique fixed point~$\ti\eps$ of~$\cF$ in~$\cB$ is
$(\ID+\eps)\ii-\ID$,
which yields $\normD{\ti\eps}_{\al,L} \le \normD{\eps}_{\al,L_1}
\le \epsc(\al,L_1,L,1)$
and hence~\eqref{ineqGevInv} by another application of
Lemma~\ref{lemGevCompos}.

\end{proof}


\subsection{Gevrey functions with small support}   \label{AppSubsecBump}
From now on we suppose $\al>1$.
We quote without proof Lemma~3.3 of \cite{ms}:


\begin{lemma}   \label{lembump}
There exists a real $c_1 = c_1(\al,L)>0$ such that, for each real $p>2$, the
space $G^{\al,L}(\T)$ contains a function $\eta_p$ which takes its
values in $[0,1]$ and satisfies
\[
-\frac{1}{2p} \le \th \le \frac{1}{2p}
\ens \Rightarrow \ens 
\eta_p(\th+\Z) = 1, \qquad
\frac{1}{p} \le \th \le 1 - \frac{1}{p}
\ens \Rightarrow \ens 
\eta_p(\th+\Z)=0
\]
and
\beglab{ineqnormetap}
\normD{\eta_p}_{\al,L} \le \exp\big(c_1 \, p^{\frac{1}{\al-1}} \big).
\edla
\end{lemma}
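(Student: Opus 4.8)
The plan is to build $\eta_p$ on $\R$ by mollifying the indicator of a short interval and then periodizing; the whole difficulty is hidden in the choice of mollifier. I would take a non-negative mollifier $g\defeq\psi_1\ast\psi_2\ast\cdots$, an infinite convolution of the normalized indicators $\psi_n\defeq\frac{1}{2a_n}\mathbf{1}_{[-a_n,a_n]}$, where $(a_n)_{n\ge1}$ is a summable sequence of positive reals to be chosen later. Its Fourier transform $\prod_{n\ge1}\frac{\sin(a_n\xi)}{a_n\xi}$ is an entire function of exponential type $A\defeq\sum_n a_n$ and is rapidly decreasing, so (Paley--Wiener) $g$ is a $C^\infty$ probability density with $\supp g\subset[-A,A]$; moreover, for all $k\ge0$, using the $k+1$ largest scales, $\normD{g^{(k)}}_{C^0(\R)}\le\frac12\prod_{n=1}^{k+1}a_n^{-1}$, since $g^{(k)}$ is the convolution of $k$ of the signed measures $\psi_n'$ (of total mass $a_n^{-1}$), one factor $\psi_{n_0}$ (of sup-norm $(2a_{n_0})^{-1}$), and the convolution of the remaining factors (a probability density). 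I then set $\eta\defeq\mathbf{1}_{[-3/(4p),\,3/(4p)]}\ast g$: as soon as $A\le\frac{1}{4p}$ this takes values in $[0,1]$, equals~$1$ on $[-\frac{1}{2p},\frac{1}{2p}]$, vanishes for $\absD{\th}\ge\frac1p$, and obeys $\normD{\eta}_{C^0(\R)}\le1$ and $\normD{\eta^{(k)}}_{C^0(\R)}\le2\normD{g^{(k-1)}}_{C^0(\R)}$ for $k\ge1$. Since $\supp\eta\subset(-\frac12,\frac12)$ when $p>2$, periodizing yields a well-defined $\eta_p\in C^\infty(\T)$ with the required plateau, support and range and $\normD{\eta_p}_{\al,L,\T}=\normD{\eta}_{\al,L,\R}$, so everything reduces to bounding $\sum_{k\ge0}\frac{L^{k\al}}{k!^\al}\normD{g^{(k)}}_{C^0(\R)}$.

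The crux is the choice of scales, and the naive $a_n\asymp A/n^\al$ fails: forcing $\prod_{n\le k}a_n^{-1}$ to grow only like $M^kk!^\al$ with $M=M(\al,L)$ forces $A$ to be bounded below in terms of~$L$, whereas we need $A\asymp 1/p$ arbitrarily small. Instead I would use two regimes governed by the integer $m\defeq\lceil\kappa\,p^{\ga}\rceil$, where $\ga=\frac{1}{\al-1}$ and $\kappa=\kappa(\al,L)$ is a large constant: set $a_n\defeq\gamma\,m^{-\al}$ for $1\le n\le m$ and $a_n\defeq\gamma\,n^{-\al}$ for $n>m$, with $\gamma=\gamma(\al,L)$ chosen so that $L^\al<\gamma\le\frac{\al-1}{4\al}\kappa^{\al-1}$ (possible once $\kappa$ is large enough). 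Then $\sum_n a_n\le\gamma\frac{\al}{\al-1}m^{1-\al}\le\gamma\frac{\al}{\al-1}\kappa^{1-\al}p^{-1}\le\frac{1}{4p}$, so the support condition holds. The first $m\asymp p^{\ga}$ scales, all equal and of size $\asymp p^{-\al/(\al-1)}$, are what localize $\eta$ to width $\asymp 1/p$; the tail $a_n\asymp\gamma n^{-\al}$ is what pins down a fixed Gevrey-$(\al,L)$ regularity; and the balance $m\asymp p^{1/(\al-1)}$ is exactly where the exponent $\frac{1}{\al-1}$ is born.

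Finally I would split $\prod_{n=1}^{k+1}a_n^{-1}$ at the index~$m$. For $k+1\le m$ it equals $\gamma^{-(k+1)}m^{\al(k+1)}$, so, using $\frac{x^k}{k!^\al}\le e^{\al x^{1/\al}}$ (which follows from $y^k/k!\le e^y$), the contribution of these terms to $\sum_k\frac{L^{k\al}}{k!^\al}\normD{g^{(k)}}_{C^0(\R)}$ is $\le\gamma^{-1}m^\al\exp\!\big(\al L\gamma^{-1/\al}m\big)\le e^{c\,p^{\ga}}$. For $k+1>m$ it equals $\gamma^{-(k+1)}m^{\al m}\frac{(k+1)!^\al}{m!^\al}$, so, by $m!\ge(m/e)^m$ and $L^\al/\gamma<1$, the contribution of these terms is $\le\gamma^{-1}\frac{m^{\al m}}{m!^\al}\sum_{k\ge0}(k+1)^\al(L^\al/\gamma)^k\le\gamma^{-1}e^{\al m}\,C_2(\al,L)\le e^{c\,p^{\ga}}$. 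Adding up, $\normD{\eta_p}_{\al,L}\le\exp\!\big(c_1\,p^{1/(\al-1)}\big)$ for a suitable $c_1=c_1(\al,L)$, which is the claim. The one genuine obstacle is the scale selection in the middle paragraph: everything else is bookkeeping, but that step both produces the sharp exponent and is the only point where the shrinking support and the fixed Gevrey class get reconciled. (This is in substance the construction behind the cited Lemma~3.3 of \cite{ms}.)
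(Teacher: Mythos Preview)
The paper does not prove this lemma: it is quoted without proof from Lemma~3.3 of \cite{ms}. Your argument---mollifying an indicator by the infinite convolution $g=\psi_1\ast\psi_2\ast\cdots$ of normalized indicators, with a two-regime choice of scales (the first $m\asymp p^{1/(\al-1)}$ scales all equal to $\asymp p^{-\al/(\al-1)}$ to force the localization, the tail $a_n\asymp n^{-\al}$ to pin down the Gevrey class)---is correct and is in substance the construction carried out in \cite{ms}, as you yourself note. One cosmetic point: you overload the symbol~$\ga$, using it both for the exponent $\tfrac{1}{\al-1}$ (the paper's convention) and for the amplitude constant in $a_n=\ga\,m^{-\al}$; renaming the latter would avoid confusion, but the mathematics is unaffected.
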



\smallskip

The proof can be found in \cite[p.~1633]{ms}.
This easily implies
\begin{lemma}  \label{lemmaetaznu}  
%
%
There exists a real $c_2=c_2(\al,L)>0$ such that,
for any $z \in \T\times\R$ and $\nu>0$, there is a function
$\eta_{z,\nu} \in G^{\al,L}(\T\times\R)$ which takes its
values in $[0,1]$ and satisfies 
\begin{gather}
\notag \text{$\eta_{z,\nu}\equiv 1$ on $B(z,\nu/2)$,} \qquad
\text{$\eta_{z,\nu}\equiv 0$ on $B(z,\nu)^c$}
\\[-2ex]
\intertext{and}
\label{ineqnormetaznu}
\normD{\eta_{z,\nu}}_{\al,L} \le \exp(c_2{\nu^{-\frac{1}{\al-1}}}).
\end{gather}
\end{lemma}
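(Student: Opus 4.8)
The plan is to obtain $\eta_{z,\nu}$ as a product of two one--dimensional bump functions, one in the angle and one in the action, each localized at scale $\nu$ around the corresponding coordinate of $z=(\theta_0,r_0)$, and to read off the norm estimate~\eqref{ineqnormetaznu} from Lemma~\ref{lembump} together with the Banach--algebra inequality~\eqref{ineqGevBanAlg}. The point is that the exponent $\nu^{-\frac1{\al-1}}$ occurring in~\eqref{ineqnormetaznu} is precisely the one Lemma~\ref{lembump} produces for a bump whose transition region has width of order $\nu$, so that only elementary geometry separates the two statements.

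I would first note that the statement only weakens as $\nu$ grows, so I may assume $\nu$ as small as needed; in particular $1/\nu>2$. Applying Lemma~\ref{lembump} with $p\defeq1/\nu$ gives $\eta_p\in G^{\al,L}(\T)$, valued in $[0,1]$, with $\eta_p(\theta+\Z)=1$ for $\absD{\theta}\le\nu/2$, $\eta_p(\theta+\Z)=0$ for $\nu\le\absD{\theta}\le1-\nu$, and $\normD{\eta_p}_{\al,L}\le\exp\!\big(c_1\nu^{-\frac1{\al-1}}\big)$ with $c_1=c_1(\al,L)$; the non--periodic version of Lemma~\ref{lembump} (the one already invoked in the proof of Proposition~\ref{prop.isolation}) provides, in the same way, $\zeta_p\in G^{\al,L}(\R)$ valued in $[0,1]$, equal to $1$ on $[-\nu/2,\nu/2]$ and to $0$ outside $(-\nu,\nu)$, with the same norm bound. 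I would then set
\[
\eta_{z,\nu}(\theta,r)\defeq\eta_p(\theta-\theta_0)\,\zeta_p(r-r_0).
\]
This belongs to $G^{\al,L}(\T\times\R)$, takes values in $[0,1]$, and since translations are isometries for $\normD{\,\cdot\,}_{\al,L}$, inequality~\eqref{ineqGevBanAlg} yields $\normD{\eta_{z,\nu}}_{\al,L}\le\normD{\eta_p}_{\al,L}\normD{\zeta_p}_{\al,L}\le\exp\!\big(2c_1\nu^{-\frac1{\al-1}}\big)$, i.e.\ \eqref{ineqnormetaznu} with $c_2\defeq2c_1$. Finally, $\eta_{z,\nu}\equiv1$ on the square $[\theta_0-\nu/2,\theta_0+\nu/2]\times[r_0-\nu/2,r_0+\nu/2]$, which contains $B(z,\nu/2)$, and $\eta_{z,\nu}\equiv0$ outside the square of half--side $\nu$ centred at $z$.

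The only point demanding care --- and hence the main, though mild, obstacle --- is to match the square shape of the support of a product bump with the round balls $B(z,\nu/2)\subset B(z,\nu)$. If the distance on $\T\times\R$ is taken to be the maximum of the distances on the two factors, the square of half--side $\nu$ is exactly $B(z,\nu)$ and we are done as above: the ratio $2$ between the two nested squares is precisely what Lemma~\ref{lembump} delivers. If instead one works with the Euclidean distance, the support square protrudes slightly from $B(z,\nu)$, and I would replace $\eta_p,\zeta_p$ by bumps whose support has radius $<\nu/\sqrt{2}$ while the plateau $\{=1\}$ still has radius $\nu/2$ (ratio $<\sqrt{2}$), so that the support square lies inside $B(z,\nu)$. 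Such sharper bumps follow from Lemma~\ref{lembump} by a bounded product of translates: for a fixed integer $m\ge3$ and $q\defeq2m/\nu$, the function $1-\prod_{j=-m}^{m}\big(1-\eta_q(\,\cdot-j/q)\big)$ is valued in $[0,1]$, equals $1$ on $[-m/q,m/q]=[-\nu/2,\nu/2]$, vanishes outside $\big(-(m+1)/q,(m+1)/q\big)$ with $(m+1)/q=\tfrac{m+1}{m}\tfrac{\nu}{2}<\nu/\sqrt{2}$, and has Gevrey norm $\le1+\big(1+\exp(c_1q^{\frac1{\al-1}})\big)^{2m+1}\le\exp\!\big(c_1'\nu^{-\frac1{\al-1}}\big)$; the same construction works for $\zeta_p$. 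This changes only the value of $c_2$, and everything else is routine, the substance being entirely in Lemma~\ref{lembump}.
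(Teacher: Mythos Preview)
Your approach is correct and is exactly what the paper has in mind: the paper gives no detailed proof, merely writing ``This easily implies'' after Lemma~\ref{lembump}, and your product-of-one-dimensional-bumps construction is the intended argument. Note that the paper specifies, immediately after the statement, that $B(z,\tilde\nu)$ denotes the closed ball for the $\normD{\,\cdot\,}_\infty$ norm, so your first case (where the ``balls'' are squares) applies directly and the Euclidean discussion, while correct, is not needed here.

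One small caveat: your opening reduction ``the statement only weakens as $\nu$ grows'' is not literally true---for large~$\nu$ the right-hand side $\exp(c_2\nu^{-1/(\al-1)})$ tends to~$1$ while any non-constant bump has Gevrey norm strictly bigger than~$1$---but this is really an imprecision in the lemma's statement rather than in your argument, and the lemma is only ever invoked for small~$\nu$ in the paper.
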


Here, for arbitrary $\ti\nu>0$, we have denoted by $B(z,\ti\nu)$ the 
closed 
ball relative to $\normD{\,.\,}_\infty$ centred at~$z$ with
radius~$\ti\nu$.




\section{Some estimates on doubly exponentially growing sequences}


According to~\eqref{eqdefNi}, the increasing sequence~$(N_i)_{i\ge1}$ is defined by
\[
N_1 \defeq \ceil*{\exp( {4\ka/\eps} )},
\qquad
N_i \defeq N_{i-1} \ceil*{ \exp \big( \exp \big( 
\ti C ( N_{i-1} \ln N_{i-1} )^\ga 
\big) \big) }
\quad\text{for $i\ge2$,}
\]
where $0<\eps\le 1$,
$\ka\ge1$
and $\ti C \defeq \max\{6c\ga,1/\ga\}$, with $c,\ga>0$.
Here, we show a few inequalities which are used in
Section~\ref{secProofNi}.
Recall that $\nu_i \defeq \frac{1}{N_i\ln N_i}$ and $\xi_i \defeq
\ex^{-c\nu_i^{-\ga}}$. 


\begin{lemma}    \label{lemineqNi}
One has \vspace{-.75ex}
\begin{align} 
& & & & & & & &
\label{ineqlnNiqi}
\ln N_i & \ge 4^i \ka / \eps 
& &\text{for every $i\ge 1$,} & & &\\[1ex]
& & & & & & & &
\label{ineqgeomNixi}
N_{i+1} \xi_{i+1} & \le \dem N_i \xi_i
& &\text{for $i$ large enough,} & & &\\[1ex]
& & & & & & & &
\label{ineqexpNixi}
N_{i+1} \xi_{i+1} & \le 3^{-E_{3c\ga,\ga}(\nu_i)}
& &\text{for $i$ large enough.} & & &
\end{align}
\end{lemma}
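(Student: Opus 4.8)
The plan is to rewrite the recursion in a convenient form and then establish the three inequalities in turn, the last being the only one where the precise value of~$\ti C$ matters. Write $A_i\defeq\exp\!\big(\ti C(N_i\ln N_i)^\ga\big)$, so that the recursion reads $N_{i+1}=N_i\lceil\ex^{A_i}\rceil$ and hence, since $\ex^{A_i}\ge1$,
\[
N_i\,\ex^{A_i}\le N_{i+1}\le 2N_i\,\ex^{A_i},\qquad \ln N_i+A_i\le\ln N_{i+1}\le\ln N_i+A_i+1
\]
for every $i\ge1$. I also record $N_i\xi_i=N_i\,\ex^{-c(N_i\ln N_i)^\ga}$ and $E_{3c\ga,\ga}(\nu_i)=\ex^{\ex^{\,3c\ga(N_i\ln N_i)^\ga}}$, and the fact that $N_i\ge N_1\ge\ex^{4\ka/\eps}\ge\ex^4$ (using $\ka\ge1$, $\eps\le1$), so $\ln N_i\ge4$ for all~$i$. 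For \eqref{ineqlnNiqi} I argue by induction: the base case $\ln N_1\ge4\ka/\eps$ is immediate; for the step the second inequality above gives $\ln N_i\ge\ln N_{i-1}+A_{i-1}$ with $A_{i-1}\ge\exp\!\big(\ti C(\ln N_{i-1})^\ga\big)$, so it suffices to know that $\exp(\ti C\,t^\ga)\ge3t$ for $t\ge4$, which is elementary (using $\ti C\ge1/\ga$ one reduces to $\exp(t^\ga/\ga)\ge3t$, and $t^\ga/\ga\ge\ex\ln t$ gives $\exp(t^\ga/\ga)\ge t^{\ex}\ge3t$ for $t\ge4$). Thus $\ln N_i\ge4\ln N_{i-1}$ and, iterating down to $i=1$, $\ln N_i\ge4^{\,i}\ka/\eps$. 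In particular $(N_i\ln N_i)^\ga\to\infty$, which is what ``for $i$ large enough'' refers to below.

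For \eqref{ineqgeomNixi} I would estimate the ratio $N_{i+1}\xi_{i+1}/(N_i\xi_i)$ directly. From $N_{i+1}\ln N_{i+1}\ge\ex^{A_i}(N_i\ln N_i)$ one has $\xi_{i+1}/\xi_i\le\exp\!\big(-c(\ex^{\ga A_i}-1)(N_i\ln N_i)^\ga\big)$, and combining with $N_{i+1}/N_i\le2\,\ex^{A_i}$,
\[
\frac{N_{i+1}\xi_{i+1}}{N_i\xi_i}\le2\exp\!\Big(A_i-c(\ex^{\ga A_i}-1)(N_i\ln N_i)^\ga\Big).
\]
Since $\ex^{\ga A_i}$ grows far faster than $A_i$ while $(N_i\ln N_i)^\ga$ stays $\ge1$, the exponent tends to $-\infty$, so the right-hand side is $\le\tfrac12$ for $i$ large; no special property of~$\ti C$ enters here.

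The estimate \eqref{ineqexpNixi} is the delicate one, and is where the choice $\ti C\ge6c\ga$ is used. Set $y\defeq(N_i\ln N_i)^\ga$ and $u\defeq\ex^{\,3c\ga y}$, so that $E_{3c\ga,\ga}(\nu_i)=\ex^{u}$ and $A_i=\ex^{\ti C y}\ge\ex^{6c\ga y}=u^2$. Taking logarithms, \eqref{ineqexpNixi} is equivalent to $c\,(N_{i+1}\ln N_{i+1})^\ga\ge\ln N_{i+1}+(\ln3)\,\ex^{u}$; using $(N_{i+1}\ln N_{i+1})^\ga\ge\ex^{\ga A_i}\ge\ex^{\ga u^2}$ together with $\ln N_{i+1}\le\ln N_i+A_i+1\le y^{1/\ga}+\ex^{\ti C y}+1$, it is enough to check that
\[
c\,\ex^{\ga u^2}\ge y^{1/\ga}+\ex^{\ti C y}+1+(\ln3)\,\ex^{u}\qquad\text{for $i$ large.}
\]
I would verify this term by term: $(\ln3)\,\ex^{u}$ is dominated since $\ga u^2-u\to\infty$; $\ex^{\ti C y}$ is dominated since $\ga u^2=\ga\,\ex^{6c\ga y}$ is doubly exponential in~$y$ whereas $\ex^{\ti C y}$ is only singly exponential; and $y^{1/\ga}+1$ is merely polynomial in~$y$. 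The main obstacle is thus purely bookkeeping — keeping straight which level of the nested exponentials dominates — and the structural point is that the slack built into~$\ti C$ (namely $6c\ga=2\cdot3c\ga$, together with $\ti C\ge1/\ga$) is exactly what turns the inner exponent $3c\ga y$ into $6c\ga y$, \ie squares~$u$ one exponential level down, leaving an overwhelming margin after the next exponentiation.
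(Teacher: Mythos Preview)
Your proof is correct and follows essentially the same route as the paper's: induction for~\eqref{ineqlnNiqi} (reaching $\ln N_i\ge4\ln N_{i-1}$), and direct growth-rate comparisons for~\eqref{ineqgeomNixi} and~\eqref{ineqexpNixi}, the latter exploiting $\ti C\ge6c\ga>3c\ga$ exactly as the paper does. One small slip in wording: when you argue that $\ex^{\ti C y}$ is dominated, it is $c\,\ex^{\ga u^2}$ (not $\ga u^2$ itself) that is doubly exponential in~$y$ --- equivalently, $\ga u^2=\ga\,\ex^{6c\ga y}$ is singly exponential while $\ti C y$ is linear --- but the conclusion is unaffected.
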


\begin{proof}
We have $\ln(N_1) \ge 4\ka/\eps$ and, 
by virtue of~\eqref{ineqexp},
$N_1 \ge 4\ka/\eps \ge 4$. 
Now, for $i\ge2$, since $\ga\ti C \ge 1$,
we have 
\[
\ln N_i \ge \exp \big( \ti C ( N_{i-1} \ln N_{i-1} )^\ga \big)
= \Big[ \exp \big( \ga\ti C ( N_{i-1} \ln N_{i-1} )^\ga \big) \Big]^{1/\ga}
\ge \Big[ \exp ( N_{i-1} \ln N_{i-1} )^\ga \Big]^{1/\ga}
\]
and~\eqref{ineqexp} yields
$\ln(N_i) \ge N_{i-1} \ln N_{i-1} \ge 4 \ln N_{i-1}$, whence~\eqref{ineqlnNiqi} follows.

\ssk

We have $\ln\frac{1}{N_i\xi_i} = c (N_i\ln N_i)^\ga - \ln N_i$ and, since
$\ln(N_i) \ll (N_i \ln N_i)^\ga$,
\[
c \La_i^\ga \ge \ln\frac{1}{N_i\xi_i}  \ge c (\La_i/\sqrt3)^\ga
\quad\text{for~$i$ large enough,}
\quad \text{where $\La_i \defeq N_i \ln N_i = 1/\nu_i$.}
\]
Inequality~\eqref{ineqgeomNixi},
 being equivalent to
\[
\ln \frac{1}{N_{i+1} \xi_{i+1}} \ge 
\ln \frac{1}{N_i \xi_i} + \ln 2
\quad\text{for~$i$ large enough,}
\]
thus results from $(\La_{i+1}/\sqrt3)^\ga \ge \La_i^\ga + \frac{\ln 2}{c}$
(which holds for~$i$ large enough because $N_{i+1} \ge 3 N_i$, hence
$\La_{i+1} = N_{i+1} \ln N_{i+1} > 3 \La_i$).

Let $C \defeq 3c\ga$. Inequality~\eqref{ineqexpNixi}, being equivalent to
\[
\ln\frac{1}{N_{i+1} \xi_{i+1}} \ge (\ln3) E_{C,\ga}(1/\La_i)
\quad\text{for~$i$ large enough,}
\]
results from $\La_{i+1}^\ga \ge \frac{3^{\ga/2}\ln3}{c} E_{C,\ga}(1/\La_i)$,
which holds since $E_{C,\ga}(1/\La_i) = \ceil*{ \exp \big( \exp ( 
C \La_i^\ga 
) \big) }$ and
\[
\La_{i+1}^\ga =
N_i^\ga (\ln N_{i+1})^\ga \ceil*{ \exp \big( \ga \exp ( 
\ti C \La_i^\ga 
) \big) },
\quad N_i^\ga (\ln N_{i+1})^\ga \ge \tfrac{3^{\ga/2}\ln3}{c}
\]
and 
$\ga \exp(\ti C \La_i^\ga) \ge \exp(C \La_i^\ga)$
for~$i$ large enough since $\ti C > C$.
\end{proof}


\end{appendices}

\newpage

\noindent 
\textbf{Acknowledgements.}
Both authors thank CNRS UMI3483 -- Fibonacci Laboratory and the Centro
Di Ricerca Matematica Ennio De Giorgi in Pisa for their hospitality. The first author is supported by  ANR-15-CE40-0001.


\vspace{.4cm}



\vspace{.6cm}

\noindent
Bassam Fayad\\ 
CNRS UMR7586 -- IMJ PRG \\
email:\,{\tt{bassam.fayad@imj-prg.fr}}


\vspace{.4cm}

\noindent
David Sauzin\\ 
CNRS UMR 8028 -- IMCCE \\
Observatoire de Paris, PSL University\\
email:\,{\tt{david.sauzin@obspm.fr}}


\end{document}